\definecolor{mygreen}{rgb}{0,0.6,0}
\definecolor{mygray}{rgb}{0.5,0.5,0.5}
\definecolor{mymauve}{rgb}{0.58,0,0.82}
\def\BibTeX{{\rm B\kern-.05em{\sc i\kern-.025em b}\kern-.08em
    T\kern-.1667em\lower.7ex\hbox{E}\kern-.125emX}}
\newcommand{\R}{\mathbb{R}}
\newcommand{\N}{\mathbb{R}}
\newcommand{\col}[1]{\text{col}\{#1\}}
\newcommand{\diag}[1]{\text{diag}\{#1\}}
\newcommand{\mat}[1]{\begin{bmatrix}#1\end{bmatrix}}
\newcommand{\trans}{\top}
\newcommand{\lipIdx}{M}
\newcommand{\proj}[2]{\text{proj}_{#2}(#1)}
\newcommand{\vproj}[3]{\Gamma_{#3}(#1,#2)}
\newcommand{\npd}[1]{\dot{\overline{#1}}}
\newcommand{\eigmin}[1]{\text{eigmin}\{#1\}}
\newcommand{\eigmax}[1]{\text{eigmax}\{#1\}}
\newcommand{\inter}[1]{\text{int}\{#1\}}
\newcommand{\bound}[1]{\text{bnd}\{#1\}}
\newcommand{\convIdx}{m}
\newcommand{\numOfIneq}[1]{\ifthenelse{\equal{#1}{}}{C_{\text{in}#1}}{C_{\text{in},#1}}}
\newcommand{\numOfEq}[1]{\ifthenelse{\equal{#1}{}}{C_{\text{eq}#1}}{C_{\text{eq},#1}}}
\newcommand{\setOfIneq}[1]{\ifthenelse{\equal{#1}{}}{\mathcal{C}_{\text{in}#1}}{\mathcal{C}_{\text{in},#1}}}
\newcommand{\setOfEq}[1]{\ifthenelse{\equal{#1}{}}{\mathcal{C}_{\text{eq}#1}}{\mathcal{C}_{\text{eq},#1}}}
\newcommand{\yrm}{\mathrm{y}}
\newcommand{\inci}{E}
\newcommand{\numOfAgents}{N}
\newcommand{\setOfAgents}{\mathcal{A}}
\newcommand{\numOfControllers}{K}
\newcommand{\setOfControllers}{\mathcal{C}}
\newcommand{\rl}[1]{\bar{#1}}
\newcommand{\err}[1]{\tilde{#1}}
\newcommand{\opt}[1]{{#1}^*}
\newcommand{\xat}[2]{\ifthenelse{\equal{#2}{}}{\x_{#1}}{\x_{#1}(#2)}}
\newcommand{\uat}[2]{\ifthenelse{\equal{#2}{}}{\mv{u}_{#1}}{\mv{u}_{#1}(#2)}}
\newcommand{\setEq}{\mathcal{E}_\text{eq}}
\newcommand{\isoseip}{IF-OFEIP}
\newcommand{\ofpIdx}{\rho}
\newcommand{\ifpIdx}{\nu}
\newcommand{\eqM}{\mathcal{M}_\text{eq}}
\newcommand{\feedt}{\gamma}
\newacronym{eip}{EIP}{equilibrium-independent passive}
\newacronym{oseip}{OFEIP}{output feedback EIP}
\newacronym{iseip}{IFEIP}{input feedforward EIP}
\newacronym{eio}{EIO}{equilibrium-independent observable}
\newtheorem{definition}{Definition}
\newtheorem{theorem}[definition]{Theorem}
\newtheorem{remark}[definition]{Remark}
\newtheorem{proposition}[definition]{Proposition}
\newtheorem{assumption}[definition]{Assumption}
\newtheorem{corollary}[definition]{Corollary}
\newtheorem{example}{Example}
\begin{document}
\title{Passivity-Based Local Design Conditions for Global Optimality in Distributed Convex Optimization}
\author{Pol Jane-Soneira, 
Charles Muller, Felix Strehle and Sören Hohmann, \IEEEmembership{Member, IEEE}
\thanks{This paragraph of the first footnote will contain the date on 
which you submitted your paper for review. It will also contain support 
information, including sponsor and financial support acknowledgment.  }
\thanks{The authors are with the Institute of Control Systems, Karlsruher Institute of Technology, 76131 Karlsruhe, Germany (e-mails: \{pol.jane,felix.strehle, soeren.hohmann\}@kit.edu, charles.muller@student.kit.edu). Corresponding author is Pol Jane-Soneira.}}

\maketitle

\begin{abstract}
	In recent times, various distributed optimization algorithms have been proposed for whose specific agent dynamics global optimality and convergence is proven. 
	However, there exist no general conditions for the design of such algorithms. 
	In this paper, we leverage passivity theory to first establish a distributed optimization framework with local design requirements for the agent dynamics in both unconstrained and constrained problems with undirected communication topologies.
	Under the roof of these requirements, the agents may use heterogeneous optimization algorithms without compromising global optimality and convergence.
	Subsequently, we propose some exemplary agent systems that comply with the established requirements.	
	Compared to existing approaches, our algorithms do not require any global initialization nor communication of multiple variables. 
	Consequently, the agents may leave or rejoin the networked optimization without compromising convergence to the correct global optimizer.    
    Furthermore, we show that for unconstrained optimization, an extension to directed communication topologies is possible.
    Simulation results illustrate the plug-and-play capabilities and interoperability of the proposed agent dynamics.
\end{abstract}

\begin{IEEEkeywords}
Distributed optimization, passivity-based control, networked control, convex optimization
\end{IEEEkeywords}

\section{Introduction}
\label{sec:introduction}
\IEEEPARstart{D}{istributed} optimization has attracted increasing attention in a wide range of applications such as wireless networks, machine learning, energy systems, or distributed parameter estimation~\cite{boyd2011distributed,bazerque2009distributed,kekatos2012distributed,primadianto2016review,wang2017distributed,molzahn2017survey}. Distributed optimization originated first from Dual Decomposition in applications concerning operations research~\cite{boyd2011distributed,dantzig1960decomposition}. In the last decade, numerous algorithms have been proposed, both in continuous \cite{kia2015distributed,hatanaka2018passivity,li2020input,notarnicola2023gradient,esteki2023distributed} and discrete-time \cite{nedic2009distributed,zhu2011distributed, shi2015extra, nedic2017achieving, lei2016primal,varagnolo2015newton} (see~\cite{yang2019survey} for a survey), many of them also borrowing ideas from control theory. 

In distributed optimization algorithms, different agents seek to minimize the sum of all private objective functions cooperatively by performing local computations and only exchanging information with their neighbors. 
The algorithms are typically designed such that, provided that all agents follow the proposed, specific algorithm, global optimality and convergence are achieved. 


The first control-theoretic approach to distributed optimization arose from combining a simple gradient descent algorithm with distributed consensus-based algorithms inducing a distributed proportional-integral action~\cite{wang2010control}, which was later named distributed PI algorithm~\cite{yang2019survey}. In the original version, the agents communicate two variables over an undirected graph. Since then, several extensions of this algorithm have been proposed. The authors in~\cite{gharesifard2013distributed} show that an immediate extension to directed communication topologies is not possible and thus propose a variant capable of handling a directed graph as communication network and non-differentiable objective functions. In~\cite{hatanaka2018passivity}, the distributed PI algorithm is analyzed with passivity theory in order to extend the framework to communication delays and constraints. In~\cite{lei2016primal}, the distributed PI algorithm is cast in a discrete-time setting and extended to handle compact, convex constraints. 
The authors in~\cite{kia2015distributed} propose a modification such that, in contrast to the original two variables, the agents exchange only a single variable in order to achieve coordination. As a consequence, the algorithm has enhanced privacy properties. However, this comes at the cost of imposing a global initialization condition for all agents. A global initialization is inappropriate for distributed optimization in large networks, since it has to be performed every time the optimization setup changes. This practically impedes agents leaving or (re)joining the network optimization, which is crucial in large scale, flexible networks as found in many applications~\cite{bazerque2009distributed,kekatos2012distributed,primadianto2016review,wang2017distributed,molzahn2017survey}. 
Furthermore, the authors in~\cite{kia2015distributed} develop a framework based on Lyapunov theory in order to allow discrete-time communication. Extensions to the powerful method presented in~\cite{kia2015distributed} have been proposed, transferring it to a discrete-time setting in~\cite{yao2018distributed}, or analyzing convergence with passivity theory and time-varying parameters~\cite{li2020input, li2020distributed}. 
In~\cite{touri2023unified}, the authors present an observer-based approach and show that many of the algorithms in the literature are a special case of the method they propose. 

Parallel to that line of research, other distributed optimization algorithms without an evident control-theoretic interpretation have been presented. The EXTRA algorithm presented in~\cite{shi2015extra} is shown to converge at a linear rate when the objective functions of the agents are strongly convex. Furthermore, it is shown in~\cite{yao2018distributed}, that the discrete-time version of the distributed PI algorithm is a special case of the EXTRA algorithm when the mixing matrices are properly chosen. The convergence of the EXTRA algorithm is ensured independently of the network topology and the agents only need to exchange a single variable. In~\cite{xi_DEXTRAFast_2017}, the EXTRA algorithm is extended to handle communication over directed graphs. However, similarly as in the case of the distributed PI, a global initialization step is necessary in the EXTRA and all its variations. Another algorithm is the DIGing, proposed in~\cite{nedic2017achieving, qu2017harnessing}, which tracks the average gradient by using dynamic average consensus. The algorithm only requires a special local initialization, but, similarly to the original version of the distributed PI in~\cite{wang2010control}, the agents need to exchange two variables. Recently, a control-theoretic perspective for the DIGing algorithm has been presented~\cite{notarnicola2023gradient}. The authors construct the DIGing algorithm by adding a distributed servomechanism to compensate the steady-state error of decentralized gradient descent. Exploiting the control-theoretic perspective, exponential stability is proven by means of Lyapunov theory. Nevertheless, either the global initialization or the necessity of exchanging two variables still remains in all existing methods. 

Another class of distributed optimization algorithms exploits second-order derivatives and solves an approximate Newton-Raphson algorithm in a distributed manner by means of average consensus \cite{varagnolo2015newton, zanella2011newton, bof2018multiagent, moradian2022distributed}. It was first proposed in~\cite{zanella2011newton,varagnolo2015newton} and introduces the idea of tracking the gradient of the global cost function, serving as precedent of the DIGing algorithms discussed above. It has been extended to lossy communication networks in~\cite{bof2018multiagent}, and modified in~\cite{moradian2022distributed} in order to achieve a comparable convergence rate to the centralized Newton-Raphson algorithm while using less communication and storage compared to existing methods. Albeit powerful, these methods require exchanging multiple additional variables and, e.g.~\cite{moradian2022distributed}, also an additional, special global initialization.


All in all, in recent years, very powerful algorithms for distributed optimization have been proposed both from the mathematical and control theoretic perspective. 
However, up to now, there exist no conditions that enable the local, independent design of such algorithms and ensure their interoperability. 
All the existing works only propose specific algorithms and, provided that all agents follow their specific algorithm, derive conditions on the tuning parameters for achieving global optimality and convergence. 

\emph{Contributions:} In sharp contrast to existing work, instead of proposing a specific algorithm, we change the perspective and first propose local design requirements for the agent-specific variable updating rules ensuring global optimality and convergence to the correct global optimizer. For that, we leverage the classic passivity-based control structure for network flow problems~\cite{arcak2016networks,burger2014duality} and power systems~\cite{strehle2021unified} to propose a distributed optimization framework.
Following these design requirements, the agents do not need further coordination, can freely choose their optimization dynamics, and can assess the global optimality and convergence with modular, local conditions. This makes our framework easily scalable and enables a plug-and-play operation of agents without compromising global optimality and convergence.
Using these design requirements, we then propose algorithms to address the shortcomings in the literature, i.e., the need for either a global initialization step or the communication of multiple variables in order to achieve global optimality and convergence.  

This work is structured as follows. In Section~\ref{sec:problem_formulation_preliminaries}, the problem formulation is given and mathematical as well as system theoretical preliminaries are presented. In Section~\ref{sec:distributed_opt}, the distributed optimization framework is presented and the local design requirements for achieving global optimality and convergence are derived. Specific agent and controller dynamics complying with the design requirements are presented in Section~\ref{sec:agent_controller_dynamics}.
In Section~\ref{sec:convergence_undirected}, less restrictive convergence results are given considering the proposed agent dynamics. Directed communication topologies are considered in Section~\ref{sec:convergence_directed}. In Section~\ref{sec:results}, an illustrative example is presented, and Section~\ref{sec:conclusion} concludes this work.

\emph{Notation:} The transpose of a vector $x \in \R^{n}$ is written as $x^\trans$. The vector $x = \col{x_i}$ and matrix $X = \diag{x_i}$ are the $n\times 1$ column vector and $n \times n$ diagonal matrix of the elements $x_i$, $i = 1,\dots, n$, respectively. Let $I_{n}$ denote the $n \times n$ identity matrix, and $1_n \in \R^n$ a vector of ones. 
The maximum and minimum eigenvalues of $X$ are denoted as $\eigmax{X}$ and $\eigmin{X}$. 
Calligraphic letters $\mathcal{A}$ represent sets or graphs. 
A directed graph is denoted by $\mathcal{G}(\setOfAgents,\mathcal{E})$, where $\setOfAgents$ is the set of agents and $\mathcal{E} \subset \setOfAgents\times \setOfAgents$ the set of edges. The incidence matrix $\inci \in \R^{\vert \setOfAgents \vert \times \vert \mathcal{E} \vert }$ is defined as $\inci = (m_{ij})$ with $m_{ij} = -1$ if edge $e_j \in \mathcal{E}$ leaves node $v_i \in \setOfAgents$, $m_{ij} = 1$ if edge $e_j \in \mathcal{E}$ enters node $v_i \in \setOfAgents$, and $m_{ij} = 0$ otherwise. The Kronecker product of $x$ and $y$ is denoted by $x \otimes y$. 
The gradient $\nabla_x f: \R^n \rightarrow \R^n$ of a scalar function $f:\R^n \rightarrow \R$ is defined as the column vector. The subscript indicates with respect to (w.r.t.) which variables we differentiate if the function has multiple variables. For convenience, we define $\err{f}(x,y) = f(x) - f(y)$.

\section{Problem Formulation and Preliminaries}  \label{sec:problem_formulation_preliminaries}

Before stating the problem, we introduce some basic concepts from convex analysis and system theory that are necessary for the further developments. 
\subsection{Convex Analysis}

A differentiable function $f:\R^n \rightarrow \R $ is convex over a convex set $\mathcal{X} \subseteq \R^n$ iff 
\begin{align} \label{eq:convexity_differentiable}
    \left( \nabla f(y) - \nabla f(x) \right)^\trans (y - x) \geq m (y-x)^\trans (y-x)
\end{align}
holds $\forall x, y \in \mathcal{X}$ and $\convIdx = 0$. If~\eqref{eq:convexity_differentiable} holds strictly with $\convIdx=0$, the function $f$ is called strictly convex, and if~\eqref{eq:convexity_differentiable} holds for $\convIdx>0$, it is called $\convIdx$-strongly convex. 
Furthermore, for any convex function $f$, the inequalities
\begin{equation} \label{eq:convexity_inequalities}
    \begin{array}{cc}
        f(x_1) - f(x_2) \leq \nabla f(x_1) (x_1 -x_2) \\ 
        f(x_1) - f(x_2) \geq \nabla f(x_2) (x_1 -x_2) 
    \end{array}
\end{equation}
hold. We say a function $f:\R^n \rightarrow \R$ is $\lipIdx$-Lipschitz, if $\Vert f(x) -  f(y) \Vert \leq \lipIdx \Vert x - y \Vert$ holds with $\lipIdx \in \R_{\geq 0}$. 

\subsection{Passivity Theory}

Consider the nonlinear input-affine system
\begin{subequations} \label{eq:sys}
    \begin{align} \label{eq:sys_dynamics}
        \dot{x} &= F(x) + Bu \\
        y &= Cx + Du \label{eq:sys_output},
    \end{align}
\end{subequations}
with $x \in \R^n$, $u \in \R^p$, $F: \R^n \rightarrow \R^n$ and matrices $B,C$ and $D$ of appropriate dimension.

\begin{definition} \label{def:integral_action}
    We say that system~\eqref{eq:sys} is a system with integral action, if there exist no other input leading to an equilibrium other than $\rl{u}=0$. 
\end{definition}

\begin{definition}[{\cite[Def.~1.1]{arcak2016networks}}]
    System~\eqref{eq:sys} is dissipative w.r.t.\ the supply rate $w: \R^n \times \R^p \rightarrow \R$, if there exists a continuously differentiable storage function $V: \R^n \rightarrow \R$ with $V(0) = 0$ and $V(x)\geq 0$ such that
    \begin{equation}
        \dot{V} = \nabla V^\trans \left( F(x) + Bu \right) \leq w(u,y).
    \end{equation}
    Furthermore, a system is called passive, if it is dissipative with the supply rate $w(u,y) = y^\top u$. 
\end{definition}

Now suppose there exists a set of equilibria $\setEq$, where, for every $\rl{x} \in \setEq$, there is a unique $\rl{u} \in \R^p$ satisfying $0 = F(\rl{x}) + B \rl{u}$. 

\begin{definition}[{\cite[Def.~3.1]{arcak2016networks}},{\cite[Def.~3.2]{simpson2018equilibrium}}]
    The system~\eqref{eq:sys} is equilibrium-independent dissipative w.r.t.\ the supply rate $w$, if there exists a continuously differentiable storage function $V: \R^n \times \setEq \rightarrow \R$ satisfying, for all $(x, \rl{x}, u) \in \R^n \times \setEq \times \R^p$, $V(\rl{x}, \rl{x}) = 0$, $V(x, \rl{x}) \geq 0$, and
    \begin{equation}
        \nabla_x V \left( F(x) + Bu \right) \leq w(u-\rl{u}, y-\rl{y}). 
    \end{equation}
\end{definition}

For convenience, define the error variables $\err{x} = x - \rl{x}$, $\err{u} = u - \rl{u}$ and $\err{y} = y - \rl{y}$. 

\begin{definition}
    System~\eqref{eq:sys} is \gls{eip}, if it is equilibrium-independent dissipative w.r.t.\ the supply rate $w(\err{u},\err{y}) = \err{u}^\trans \err{y} - \ofpIdx \err{y}^\trans \err{y} - \ifpIdx \err{u}^\trans \err{u} - \Psi(\err{x})$, where $\Psi: \R^n \rightarrow \R_{\geq 0}$ is a positive semidefinite function and $\ofpIdx,\ifpIdx\geq 0$. Moreover, the system~\eqref{eq:sys} is called
    \begin{itemize}
        \item \gls{oseip} with index $\ofpIdx$, if $\ofpIdx\in\R$, 
        \item \gls{iseip} with index $\ifpIdx$, if $\ifpIdx \in \R$, 
        \item strictly \gls{eip}, if $\Psi(\err{x})$ is positive definite.  
    \end{itemize}
    In short, we say a system is \gls{oseip}($\ofpIdx$) or \gls{iseip}($\ifpIdx$) in the remainder of the paper. If a system is simultaneously \gls{oseip}$(\ofpIdx)$ and \gls{iseip}$(\ifpIdx)$, we call it \isoseip$(\ifpIdx,\ofpIdx)$. If $\ifpIdx$, $\ofpIdx > 0$, we say the system has an excess of passivity. If the indices are negative, we say the system has a lack of passivity. 
\end{definition}

\begin{definition}[{\cite[Def.~4.1]{simpson2018equilibrium}}]
    The system~\eqref{eq:sys} is \gls{eio}, if for every $\rl{x} \in \setEq$ with associated equilibrium input $\rl{u}$ and output $\rl{y}$, no trajectory of~\eqref{eq:sys} can remain within the set $\left\{ x \in \R \mid C x + D \rl{u} = \rl{y} \right\}$ other than the equilibrium trajectory $x(t) = \rl{x}$. 
\end{definition}

\subsection{Problem Formulation and Main Idea}

Consider a set of agents $\setOfAgents = \{1,\dots,\numOfAgents \}$, $\numOfAgents \in \N$. We define the network optimization problem 
\begin{subequations} \label{eq:optimization_problem}
    \begin{align}
        \min & \sum_{i \in \setOfAgents} f_i(\yrm) \\
        \text{s.t. } & g_{il}(\yrm) \leq 0, \quad \forall l \in \setOfIneq{i}, \, \forall i \in \setOfAgents \label{eq:inequality_constraints} \\
        & h_{ij}(\yrm)  = 0, \quad \forall j \in \setOfEq{i}, \, \forall i \in \setOfAgents \label{eq:equality_constraints}
    \end{align}
\end{subequations}
where $\yrm \in \R^n$. The objective function $f_i: \R^n \rightarrow \R$ and the constraints $g_{il}: \R^n \rightarrow \R$ and $h_{ij}: \R^n \rightarrow \R$ are private to agent $i \in \setOfAgents$. The variable $\yrm \in \R^n$ is a global variable the agents. The sets $\setOfIneq{i} =\{1,\dots,\numOfIneq{i} \}$ and $\setOfEq{i} = \{1,\dots,\numOfEq{i}\}$ are the sets of inequality and equality constraints of agent $i\in\setOfAgents$. The functions $g_{il}: \R^{n} \rightarrow \R$ are assumed to be convex, the functions $h_{ij}: \R^{n} \rightarrow \R$ to be affine, i.e., $h_{ij}(\yrm) = a_{ij}^\trans \yrm + c_{ij}$. Further, we assume that~\eqref{eq:optimization_problem} is strictly convex and Slater's condition holds, i.e., there exists an $\yrm \in \R^n$ where constraints~\eqref{eq:equality_constraints} hold and constraints~\eqref{eq:inequality_constraints} are strictly feasible~\cite[p.~244]{boyd2004convex}. Note that these are standard assumptions in convex optimization~\cite{hatanaka2018passivity,lei2016primal,boyd2004convex}. 

The problem considered in this work is the agents cooperatively finding a solution to the network-wide optimization problem~\eqref{eq:optimization_problem} only using local information and communicating with the neighbors. For that, each agent $i \in \setOfAgents$ typically holds a local estimate $y_i\in \R^n$ of the global optimizer $\opt{\yrm} \in \R^n$. The agents can perform local computations with its private functions and communicate the local estimate $y_i$ with the neighbors. The communication topology is described by an undirected graph $\mathcal{G}(\setOfAgents,\mathcal{E})$, where the agents represent the nodes and an edge $e_{ij} \in \mathcal{E} \subset \setOfAgents \times \setOfAgents$ between agent $i$ and $j$ means that the agents are neighbors and can exchange their local estimations. 

\emph{Main idea:} In this work, we propose a class of algorithms that decompose into the form shown in Figure~\ref{fig:burger_structure}. It consists of agent systems $\Sigma_i$, $i \in \setOfAgents$ and controller systems $\Pi_k$, $k\in\setOfControllers$, where $\setOfControllers= \{1,\dots,\numOfControllers \}$ is the set of controllers. Each controller is associated with an edge of graph $\mathcal{G}$, i.e., with the communication between two agents, and thus $\vert \setOfControllers \vert = \vert \mathcal{E} \vert$.  
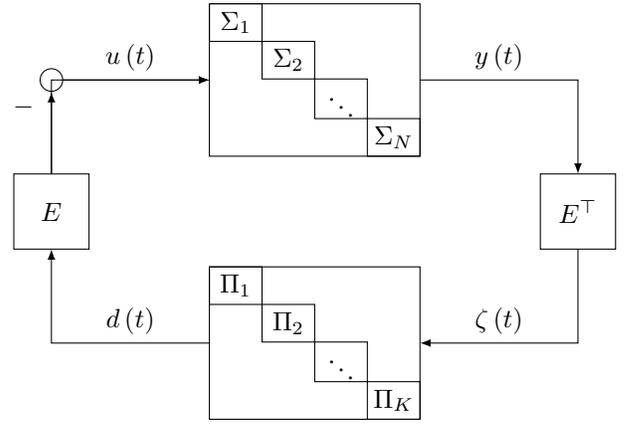
\begin{figure}[t]
	\centering
    \begin{tikzpicture}
	\begin{scope}[node distance=0, every node/.style={rectangle,draw=black}, minimum height=5mm, minimum width=7mm, outer sep=0pt, inner sep=0pt, on grid]
		\node (s1) at (0,0) {$\Sigma_1$};
		\node[below right=of s1.south east](s2){$\Sigma_2$};
		\node[below right=of s2.south east](sSpace){$\ddots$};
		\node[below right=of sSpace.south east](sN){$\Sigma_N$};
		\node[fit=(s1) (sN)] (s) {};
		\node[below=3.5cm of s1] (p1){$\Pi_1$};
		\node[below right=of p1.south east](p2){$\Pi_2$};
		\node[below right=of p2.south east](pSpace){$\ddots$};
		\node[below right=of pSpace.south east](pN){$\Pi_K$};
		\node[fit=(p1) (pN)] (p) {}; 
		\node[minimum height=10mm,minimum width=10mm,left=3cm of {$(p)!0.5!(s)$}] (E) {${E}$};
		\node[minimum height=10mm,minimum width=10mm,right=3cm of {$(p)!0.5!(s)$}] (ET) {${E}^\top$};
	\end{scope}
	
	\node[circle,draw,minimum height=3mm,minimum width=3mm, inner sep=0pt](sum) at (E |- s) {};
	\node[below left=0mm and 0mm of sum] {$-$};
	
	\begin{scope}[-latex, every node/.style={above}]
		\draw[-latex] (p) -| node[near start] {${d}\left(t\right)$} (E);
		\draw[-latex] (s) -| node[near start] {${y}\left(t\right)$} (ET);
		\draw[-latex] (ET) |- node[near end] {${\zeta}\left(t\right)$} (p);
		\draw[-latex] (E) -- (sum);
		\draw[-latex] (sum) -- (s); 
		\draw[draw opacity=0] (E) |- node[draw opacity=100,near end] {${u}\left(t\right)$} (s);
	\end{scope}
\end{tikzpicture}
    \caption{Block diagram of the proposed algorithm structure, composed of agent systems $\Sigma_i$ and controller systems $\Pi_k$.}
	\label{fig:burger_structure}
\end{figure} 
The agent systems
\begin{align} \label{eq:agents}
    \Sigma_i: 
    \begin{array}{cl}
        \dot{x}_i & = \chi_i(x_i, u_i)\\
        y_i & = \psi_i(x_i, u_i),
    \end{array}
\end{align}
with $\chi_i:\R^n \times \R^n \rightarrow \R^n$ and $\psi_i:\R^n\times \R^n \rightarrow \R^n$, hold the state $x_i \in \R^n$, which represents an intermediate value for the estimation $y_i \in \R^n$ of the global optimizer $\opt{\yrm}$. The input of the agent systems is $u_i \in \R^n$, which should be chosen such that the output $y_i \in \R^n$ of the agents converges to the global optimizer $\opt{\yrm}$. The controller systems are described by
\begin{align} \label{eq:controllers}
    \Pi_k: 
    \begin{array}{cl}
        \dot{z}_k & = \Phi_k(z_k, \zeta_k) \\
        d_k & = \sigma_k(z_k),
    \end{array}
\end{align}
with $\Phi_k:\R^n \times \R^n \rightarrow \R^n$, $\sigma_k:\R^n \rightarrow \R^n$, state $z_k \in \R^n$, input $\zeta_k \in \R^n$, and output $d_k \in \R^n$, which is fed back to the agent systems in order to find the global optimizer $\opt{\yrm}$. The agent and controller systems are interconnected by an incidence matrix $\inci$ lifted to the space $\R^{\numOfAgents n}$ of the networked system states as in
\begin{subequations}\label{eq:interconnection}
    \begin{align} \label{eq:interconnection1}
        \zeta &= \left( \inci \otimes I_n \right)^\trans y \\
        u &= - \left( \inci \otimes I_n \right) d, \label{eq:interconnection2}
    \end{align}
\end{subequations}
where $\zeta$, $u$, $d$ and $y$ are the stacked variables of the agents and controllers, i.e., $y = \col{y_i}$, etc.
Taking into account the definition of the incidence matrix~$\inci$, it becomes clear from~\eqref{eq:interconnection1} that the inputs $\zeta_k$ of the controller systems are the differences between the outputs $y_i$ of different agents, i.e., the estimations $y_i$ of different agents of the global optimizer $\opt{\yrm}$. 

With the partitioning of the algorithm into agent and controller systems, the question arises of where to physically implement the controller systems. We show in the next example that this is a purely technical implementation issue which does not limit the findings of this work.

\begin{example}
    Consider a network of four agents interconnected as in Figure~\ref{fig:example1}.
    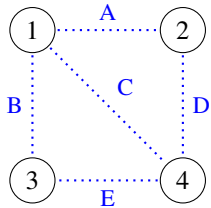
\begin{figure}[ht]
        \centering
        \begin{tikzpicture}
	\begin{scope}[node distance={2cm and 2cm}, every node/.style={circle,draw=black}, minimum size=6mm, outer sep=1pt, inner sep=2pt, on grid]
		\node (n1) at (0,0) {1};
		\node[right=of n1](n2){2};
		\node[below=of n1](n3){3};
		\node[below=of n2](n4){4};
	\end{scope}
	
	\begin{scope}[dotted, thick, blue, every node/.style={black, midway, minimum size=0pt, auto=left}]
		\draw (n1) -- node[blue] {\small A} (n2);
		\draw (n1) --  node[blue, left] {\small B} (n3);
		\draw (n1) -- node[blue] {\small C} (n4);
		\draw (n2) -- node[blue] {\small D} (n4);
		\draw (n4) -- node[blue] {\small E} (n3); 
	\end{scope}
\end{tikzpicture}
        \caption{Exemplary network composed of 4 agents (nodes) and communication links in blue (edges).}
        \label{fig:example1}
    \end{figure}
    The proposed method introduces five controllers A - E, one for each edge. The controller dynamics can be interpreted as independent entities with computation capabilities or, alternatively, as part of the same system implementing the agent dynamics, as shown in Figure~\ref{fig:example1_2}. In the second case, the agents implement the subsystems inside the dashed line in Figure~\ref{fig:withAgents}, composed of the agent dynamics~\eqref{eq:agents} and the controller dynamics~\eqref{eq:controllers}.
    Depending on the interpretation, the agents have to exchange different variables and different information flows occur. When the controllers are independent entities, the agents send their estimation $y_i$ and receive the input $u_i$. When the controllers are implemented together with the agent dynamics, the agents send either their estimation $y_i$ or the controller output $d_k$, depending on whether they have the controller or not (c.f. Figure~\ref{fig:withAgents}). 
    \begin{figure}[h]
        \begin{subfigure}[b]{0.2\textwidth}
            \centering
            \begin{tikzpicture}
	\begin{scope}[node distance={1.4cm and 1.4cm}, every node/.style={draw=black}, minimum size=6mm, outer sep=1pt, inner sep=2pt, on grid]
		\node[circle] (n1) at (0,0) {1};
		\node[diamond, draw=mygreen, right=of n1](nA){A};
		\node[circle, right=of nA](n2){2};
		\node[diamond, draw=mygreen, below=of n1](nB){B};
		\node[diamond, draw=mygreen, below=of nA](nC){C};
		\node[circle, below=of nB](n3){3};
		\node[diamond, draw=mygreen, below=of n2](nD){D};
		\node[circle, below=of nD](n4){4};
		\node[diamond, draw=mygreen, right=of n3](nE){E};
	\end{scope}
	
	\begin{scope}[dotted, thick, blue, every node/.style={scale=0.8, minimum size=0pt, sloped}]
		\draw (n1) -- node[pos=0.5, auto=left, mygreen] {\small$\leftarrow {d}_A $} node[pos=0.45, auto=right] {\small$\rightarrow {y}_1$} (nA);
		\draw (nA) -- node[pos=0.4, auto=left, mygreen] {\small ${d}_A \rightarrow$} node[pos=0.45, auto=right] {\small$\leftarrow {y}_2$} (n2);
		\draw (n1) -- node[pos=0.6, auto=left, mygreen] {\small$\leftarrow{d}_B $} node[pos=0.5, auto=right] {\small${y}_1 \rightarrow$} (nB);
		\draw (nB) -- node[pos=0.4, auto=left, mygreen] {\small${d}_B \rightarrow$} node[pos=0.45, auto=right] {\small$\leftarrow {y}_3$} (n3);
		\draw (n1) -- node[pos=0.7, auto=left, mygreen] {\small$\leftarrow{d}_C $} node[pos=0.7, auto=right] {\small${y}_1 \rightarrow$} (nC);
		\draw (nC) -- node[pos=0.3, auto=left, mygreen] {\small${d}_C \rightarrow$} node[pos=0.3, auto=right] {\small$\leftarrow {y}_4$} (n4);
		\draw (n2) -- node[pos=0.6, auto=left, mygreen] {\small${\leftarrow d}_D$} node[pos=0.6, auto=right] {\small$ {y}_2\rightarrow$} (nD);
		\draw (nD) -- node[pos=0.4, auto=left, mygreen] {\small${d}_D \rightarrow$} node[pos=0.45, auto=right] {\small$\leftarrow {y}_4$} (n4);
		\draw (n4) -- node[pos=0.6, auto=left, mygreen] {\small${d}_E \rightarrow$} node[pos=0.6, auto=right] {\small$\leftarrow {y}_4$} (nE);
		\draw (nE) -- node[pos=0.4, auto=left, mygreen] {\small$\leftarrow{d}_E$} node[pos=0.45, auto=right] {\small${y}_3\rightarrow$} (n3);
	\end{scope}

	\node[draw=white,minimum width = 3.8cm, minimum height=4.25cm] at (1.4,-1.3) {};

\end{tikzpicture}
            \caption{Controllers as independent entities}
            \label{fig:independent}
        \end{subfigure}
        \hfill
        \begin{subfigure}[b]{0.27\textwidth}
            \centering
            \begin{tikzpicture}
	\begin{scope}[node distance={3cm and 3cm}, every node/.style={circle, draw=black}, minimum size=6mm, outer sep=1pt, inner sep=2pt, on grid]
		\node (n1) at (0,0) {1};
		\node[right=of n1](n2){2};
		\node[below=of n1](n3){3};
		\node[below=of n2](n4){4};
	\end{scope}
	
	\begin{scope}[node distance=2pt, every node/.style={diamond, draw=mygreen}, minimum size=7mm, outer sep=0pt, inner sep=2pt]
		\node[right=of n1.east](nA){A};
		\node[below=of n1.south](nB){B};
		\node[below right=of n1.south east](nC){C};
		\node[below=of n2.south](nD){D};
		\node[left=of n4.west](nE){E};
	\end{scope}
	
	\begin{scope}[every node/.style={circle, draw=mygray, dashed}, outer sep=0pt, inner sep=0pt]
		\node[minimum width = 20mm] at (0.26,-0.26) {};
		\node[fit=(n2) (nD)] (n2c) {};
		\node[fit=(n4) (nE)] (n4c) {};
	\end{scope}
	
	\begin{scope}[dotted, thick, blue, every node/.style={scale=0.8, minimum size=0pt, sloped}]
		\draw (nA) -- node[pos=0.3, auto=left, mygreen] {\({d}_A \rightarrow\)} node[pos=0.45, auto=right] {\(\leftarrow {y}_2\)} (n2);
		\draw (nB) -- node[pos=0.4, auto=left, mygreen] {\({d}_B \rightarrow\)} node[pos=0.6, auto=right] {\(\leftarrow {y}_3\)} (n3);
		\draw (nC) -- node[pos=0.4, auto=left, mygreen] {\({d}_C \rightarrow\)} node[pos=0.6, auto=right] {\(\leftarrow {y}_4\)} (n4);
		\draw (nD) -- node[pos=0.4, auto=left, mygreen] {\({d}_D \rightarrow\)} node[pos=0.44, auto=right] {\(\leftarrow {y}_4\)} (n4);
		\draw (nE) -- node[pos=0.4, auto=left, mygreen] {\(\leftarrow {d}_E\)} node[pos=0.6, auto=right] {\({y}_3 \rightarrow\)} (n3);
	\end{scope}
	
	\node[draw=white,minimum width = 4.75cm, minimum height=4.75cm] at (1.55,-1.55) {};
\end{tikzpicture}
            \caption{Controllers as part of the agents}
            \label{fig:withAgents}
        \end{subfigure}
        \caption{Different technical realization of the controller systems.}
        \label{fig:example1_2}
    \end{figure}
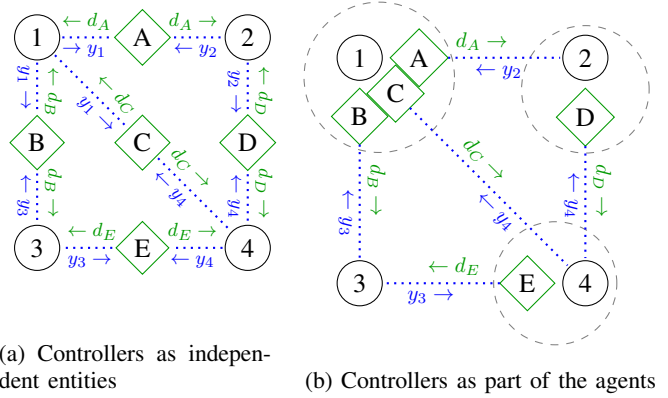
\end{example}
The decision of considering the controllers as independent entities or not may depend on the application. When the agents are likely to leave or rejoin the network often and there exists an independent computing infrastructure, a configuration with controllers as independent entities may be favorable. Otherwise, a configuration with the controllers as part of the agents may be advantageous.



%

\section{Local Design Requirements for Global Optimality and Convergence} \label{sec:distributed_opt}

In this section, we derive local design requirements for the agent systems~\eqref{eq:agents}, the controller systems~\eqref{eq:controllers}, and the communication structure~\eqref{eq:interconnection} such that the networked systems (see Figure~\ref{fig:burger_structure}) achieve global optimality and convergence. To begin with, we revisit the classic optimality conditions for problem~\eqref{eq:optimization_problem}, which the agents and controllers have to jointly fulfill. 

\subsection{Global Optimality Conditions} \label{sec:optimality_conditions}


For the optimality conditions, we distinguish between the unconstrained optimization problem, i.e., \eqref{eq:optimization_problem} without the constraints~\eqref{eq:inequality_constraints}-\eqref{eq:equality_constraints} for any agent $i \in \setOfAgents$, and its constrained version as in \eqref{eq:optimization_problem}. 

In the unconstrained case, we have the following conditions.
\begin{proposition}[\cite{boyd2004convex}]
    Assume the optimization problem~\eqref{eq:optimization_problem} is convex and there are no constraints. Then, $\opt{\yrm}$ is an optimizer if and only if
    \begin{align} \label{eq:optimality_condition}
        \sum_{i \in \setOfAgents} \nabla f_i(\opt{\yrm}) = 0
    \end{align}
    holds. If~\eqref{eq:optimization_problem} is strictly convex, $\opt{\yrm}$ is unique.
\end{proposition}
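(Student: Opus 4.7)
The plan is to reduce the claim to the classical first-order optimality theorem for a single convex differentiable function by aggregating the local objectives. Define $F(\yrm) := \sum_{i \in \setOfAgents} f_i(\yrm)$. Since each $f_i$ is convex and differentiable, $F$ is convex and differentiable, and by linearity of the gradient $\nabla F(\yrm) = \sum_{i \in \setOfAgents} \nabla f_i(\yrm)$. Thus condition~\eqref{eq:optimality_condition} is equivalent to $\nabla F(\opt{\yrm}) = 0$, and problem~\eqref{eq:optimization_problem} (without constraints) is just the unconstrained minimization of the convex function $F$ over $\R^n$.

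For the sufficiency direction, assume $\nabla F(\opt{\yrm}) = 0$. Applying the second inequality in~\eqref{eq:convexity_inequalities} to $F$ with $x_2 = \opt{\yrm}$ and arbitrary $x_1 = \yrm \in \R^n$ yields
\begin{equation*}
    F(\yrm) - F(\opt{\yrm}) \geq \nabla F(\opt{\yrm})^\trans (\yrm - \opt{\yrm}) = 0,
\end{equation*}
so $\opt{\yrm}$ minimizes $F$. For the necessity direction, suppose $\opt{\yrm}$ is an optimizer. Since $F$ is differentiable and the problem is unconstrained, the standard first-order necessary condition from calculus gives $\nabla F(\opt{\yrm}) = 0$, i.e.~\eqref{eq:optimality_condition}.

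For uniqueness under strict convexity, I would argue by contradiction. Assume two distinct optimizers $\opt{\yrm}_1 \neq \opt{\yrm}_2$ with common optimal value $F^\ast$. Both satisfy $\nabla F(\opt{\yrm}_1) = \nabla F(\opt{\yrm}_2) = 0$ by the necessity direction above. Applying~\eqref{eq:convexity_differentiable} strictly (with $\convIdx = 0$ but strict inequality, as permitted for strictly convex $F$) at $x = \opt{\yrm}_1$, $y = \opt{\yrm}_2$, one obtains
\begin{equation*}
    \bigl(\nabla F(\opt{\yrm}_2) - \nabla F(\opt{\yrm}_1)\bigr)^\trans (\opt{\yrm}_2 - \opt{\yrm}_1) > 0,
\end{equation*}
which contradicts the fact that both gradients vanish. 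Hence the optimizer is unique.

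The proof is essentially textbook and presents no genuine obstacle; the only point worth emphasizing is that strict convexity of the sum $F$ follows directly from strict convexity of the problem~\eqref{eq:optimization_problem} as assumed, so the gradient inequality used for uniqueness applies to $F$ without further argument. Since this proposition is cited to~\cite{boyd2004convex}, the paper likely omits the proof and I would too, or include only the short chain of inequalities above.
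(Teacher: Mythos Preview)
Your proof is correct, and your suspicion is confirmed: the paper states this proposition without proof, merely citing \cite{boyd2004convex} as a standard result. There is therefore nothing to compare against, and your short chain of inequalities is exactly the textbook argument one would expect.
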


In the constrained case, the classic optimality conditions are known as the Karush-Kuhn-Tucker (KKT) conditions. Since Slater's condition is assumed to hold, the KKT conditions are necessary and sufficient~\cite[p.~244]{boyd2004convex}. Further, we define the Lagrange function of agent $i\in \setOfAgents$ as
\begin{align} \label{eq:lagrange_function}
    L_i = f_i(\yrm) + \sum_{l \in \setOfIneq{i}} \lambda_{il} g_{il}(\yrm) + \sum_{j \in \setOfEq{i}} \mu_{ij} h_{ij}(\yrm),
\end{align}
with $\lambda_{il}, \mu_{ij}\in \R$, and the global Lagrange function of optimization problem~\eqref{eq:optimization_problem} as $L = \sum_{i\in \setOfAgents} L_i$. For convenience, denote $\lambda_i = \col{\lambda_{il}} \in \R^{\numOfIneq{i}}$ as the multipliers of agent $i$, and analogously for $\mu_i \in \R^{\numOfEq{i}}$.
\begin{proposition}[{\cite[p.~244]{boyd2004convex}}]
    Assume the optimization problem~\eqref{eq:optimization_problem} is convex and let Slater's condition hold. Then, $\opt{\yrm}$ is an optimizer if and only if there exist multipliers $\lambda^*_i,\mu^*_i$ such that the conditions
    \begin{subequations} \label{eq:KKT}
        \begin{align}
            \sum_{i \in \setOfAgents} \nabla_{\yrm} \, L_i(\opt{\yrm},\lambda^*_i,\mu^*_i) &= 0 \\
            g_{il}(\opt{\yrm}) & \leq 0 \label{eq:condition_constraint_1}\\
            h_{ij}(\opt{\yrm}) & = 0 \\
            \lambda_{il}^* & \geq 0 \\
            \lambda_{il}^* \,g_{il}(\opt{\yrm}) & = 0 \label{eq:condition_constraint_4}
        \end{align}
    \end{subequations}
    are fulfilled $\forall i \in \setOfAgents$, $\forall j \in \setOfEq{i}$ and $\forall l \in \setOfIneq{i}$. Due to strict convexity of~\eqref{eq:optimization_problem}, the optimizer $\opt{\yrm}$ is unique. 
\end{proposition}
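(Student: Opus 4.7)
The plan is to invoke strong duality for convex problems under Slater's condition, which delivers both directions of the equivalence. Since the statement is a standard textbook result (already cited as~\cite[p.~244]{boyd2004convex}), the cleanest strategy is to outline the saddle-point argument rather than to reprove every building block.

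For necessity (the ``only if'' direction), I would first apply Slater's condition together with convexity to obtain strong duality, which provides multipliers $\lambda^*_i$ with $\lambda^*_{il} \geq 0$ and $\mu^*_i$ such that $(\opt{\yrm}, \lambda^*, \mu^*)$ is a saddle point of the global Lagrangian $L = \sum_{i\in\setOfAgents} L_i$. From the saddle-point property, $\opt{\yrm}$ minimizes the convex, differentiable function $L(\cdot, \lambda^*, \mu^*)$ over $\R^n$, so its gradient w.r.t.\ $\yrm$ vanishes at $\opt{\yrm}$; this is exactly the stationarity condition. Primal feasibility of $\opt{\yrm}$ and dual feasibility $\lambda^*_{il} \geq 0$ are inherent to the saddle-point formulation. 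Complementary slackness then follows from the vanishing duality gap: expanding $\sum_{i} f_i(\opt{\yrm}) = L(\opt{\yrm}, \lambda^*, \mu^*)$ and using $h_{ij}(\opt{\yrm}) = 0$ yields $\sum_{i,l} \lambda^*_{il} g_{il}(\opt{\yrm}) = 0$, and since each summand is nonpositive, each must vanish individually.

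For sufficiency (the ``if'' direction), I would show directly that any $\opt{\yrm}$ satisfying the KKT conditions minimizes $\sum_i f_i$ over the feasible set of~\eqref{eq:optimization_problem}. Convexity of every $f_i$ and $g_{il}$, affineness of every $h_{ij}$, together with $\lambda^*_{il} \geq 0$, make $L(\cdot, \lambda^*, \mu^*)$ convex in $\yrm$; the stationarity condition then forces $\opt{\yrm}$ to be a global minimizer of $L(\cdot, \lambda^*, \mu^*)$. For any feasible $\yrm$, the chain
\begin{equation*}
\sum_i f_i(\yrm) \geq L(\yrm, \lambda^*, \mu^*) \geq L(\opt{\yrm}, \lambda^*, \mu^*) = \sum_i f_i(\opt{\yrm}),
\end{equation*}
where the first inequality uses $\lambda^*_{il} g_{il}(\yrm) \leq 0$ and $h_{ij}(\yrm) = 0$, and the final equality uses complementary slackness together with $h_{ij}(\opt{\yrm}) = 0$, establishes optimality. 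Uniqueness follows immediately from the assumed strict convexity of $\sum_i f_i$.

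The main obstacle in a self-contained proof would be establishing the existence of the dual multipliers, which relies on a separating-hyperplane argument applied to the set of attainable cost-constraint pairs and is precisely where Slater's condition is consumed. All remaining steps are routine convex-analytic manipulations, so invoking the textbook reference for this existence step is the pragmatic choice.
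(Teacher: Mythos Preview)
The paper does not actually prove this proposition; it is stated as a citation to~\cite[p.~244]{boyd2004convex} with no accompanying argument. Your outline is correct and mirrors the standard saddle-point derivation found in that reference, so there is nothing to compare beyond noting that you have supplied a proof where the paper simply defers to the textbook.
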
 
Note that both conditions~\eqref{eq:optimality_condition}~and~\eqref{eq:KKT} cannot be solved independently by the agents and controllers, i.e., agents and controllers need to coordinate in order to jointly fulfill these network-wide optimality conditions.



\subsection{Local Design Requirements for Global Optimality} \label{sec:requirements_optimality}

In this subsection, we propose design requirements for the agent systems~\eqref{eq:agents}, the controller systems~\eqref{eq:controllers}, and the communication structure~\eqref{eq:interconnection} such that the closed-loop, networked system in Figure~\ref{fig:burger_structure} meets the network-wide optimality conditions~\eqref{eq:optimality_condition}~or~\eqref{eq:KKT} revisited above in steady state.  



\subsubsection{Unconstrained Optimization}
We start with the requirements in the case of an unconstrained optimization problem, i.e.,~\eqref{eq:optimization_problem} without~\eqref{eq:equality_constraints}-\eqref{eq:inequality_constraints}. 

\begin{theorem} \label{th:requirements_equilibria}
    Consider systems~\eqref{eq:agents} and~\eqref{eq:controllers} interconnected by a connected communication graph~\eqref{eq:interconnection}. If the equilibria of the agent systems~\eqref{eq:agents}
    fulfill
    \begin{align} \label{eq:condition_equilibrium_agents}
        \rl{u}_i = \nabla f_i(\rl{y}_i)
    \end{align}
    and the controllers~\eqref{eq:controllers} are systems with integral action as in Definition~\ref{def:integral_action}, all equilibria $(\rl{y}, \rl{d})$ of the networked system are in the manifold given by $\mathcal{M}_\text{eq} = \{ (\rl{y},\rl{d}) \in \R^n \times \R^n \mid \rl{y} = 1_\numOfAgents \otimes \opt{\yrm} \}$, where $\opt{\yrm}$ is the minimizer of problem~\eqref{eq:optimization_problem}.  
\end{theorem}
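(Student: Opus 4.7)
The plan is to show that at any equilibrium, the requirements on the controllers and agents together with the structure of the interconnection force the optimality condition \eqref{eq:optimality_condition}. I would proceed in four short steps.

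First, I would exploit the integral action of the controllers. By Definition~\ref{def:integral_action}, a controller can only be at equilibrium when its input is zero, hence $\bar\zeta_k = 0$ for all $k \in \setOfControllers$. Using the interconnection \eqref{eq:interconnection1}, this yields $(\inci \otimes I_n)^\trans \bar y = 0$. Since the graph $\mathcal{G}$ is connected, the kernel of $\inci^\trans$ is spanned by $1_\numOfAgents$, so $\ker\left((\inci \otimes I_n)^\trans\right) = \operatorname{span}\{1_\numOfAgents \otimes v : v \in \R^n\}$. Therefore there exists some $y^\star \in \R^n$ with $\bar y = 1_\numOfAgents \otimes y^\star$, i.e., all agent outputs agree in equilibrium.

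Second, I would invoke the agent equilibrium requirement \eqref{eq:condition_equilibrium_agents}, which gives $\bar u_i = \nabla f_i(\bar y_i) = \nabla f_i(y^\star)$ for every $i \in \setOfAgents$. Third, I would use the other half of the interconnection, \eqref{eq:interconnection2}, together with the fundamental property $1_\numOfAgents^\trans \inci = 0$ of the incidence matrix (each edge contributes one $+1$ and one $-1$ to its column). Multiplying \eqref{eq:interconnection2} from the left by $(1_\numOfAgents \otimes I_n)^\trans$ gives
\begin{equation*}
    \sum_{i \in \setOfAgents} \bar u_i = -\bigl((1_\numOfAgents^\trans \inci) \otimes I_n\bigr) \bar d = 0,
\end{equation*}
independently of the controller outputs $\bar d$.

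Combining these observations yields $\sum_{i \in \setOfAgents} \nabla f_i(y^\star) = 0$, which is exactly the unconstrained optimality condition \eqref{eq:optimality_condition}, so $y^\star = \opt{\yrm}$ (and is unique by strict convexity). Hence every equilibrium lies in $\eqM$. I do not anticipate a real obstacle here: the argument is essentially a bookkeeping exercise once the role of the incidence matrix is recognized. The only subtle point is ensuring that the kernel characterization of $(\inci \otimes I_n)^\trans$ is used correctly, which relies crucially on connectivity of $\mathcal{G}$ — without it, one would only obtain equality of $\bar y_i$ within each connected component, and the summation step would not produce a network-wide gradient condition.
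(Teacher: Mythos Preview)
Your proposal is correct and follows essentially the same argument as the paper: integral action of the controllers forces $\bar\zeta = 0$ and hence consensus via the kernel of $(\inci\otimes I_n)^\trans$, while left-multiplying \eqref{eq:interconnection2} by $(1_\numOfAgents^\trans\otimes I_n)$ and using $1_\numOfAgents^\trans\inci = 0$ yields $\sum_i \bar u_i = 0$, which combined with \eqref{eq:condition_equilibrium_agents} gives the optimality condition. The only cosmetic difference is that the paper defers the kernel computation of $(\inci\otimes I_n)^\trans$ to a separate proposition in the appendix, whereas you state it directly.
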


\begin{proof}
    With Definition~\ref{def:integral_action}, it follows for the $\Pi_k$ systems that $\rl{\zeta}_k = 0$. With~\eqref{eq:interconnection1} we have 
    \begin{align} \label{eq:consensus}
        0 = \left( \inci \otimes I_n \right)^\trans \rl{y},
    \end{align}
    which is fulfilled if and only if $\rl{y}$ is contained in the kernel of $\left( \inci \otimes I_n \right)^\trans$. Using Proposition~\ref{prop:kernel_kronecker} from Appendix~\ref{sec:appendix_proofs}, it follows from~\eqref{eq:consensus} that all agents are in consensus, i.e., $\rl{y} = 1_\numOfAgents \otimes a_n$, $a_n \in \R^n$. Left-multiplying~\eqref{eq:interconnection2} with the matrix $(1_\numOfAgents^\trans \otimes I_n)$ and taking into account that $1_\numOfAgents^\trans E = 0$ and property P1 of the Kronecker product as in Appendix~\ref{sec:appendix_kronecker}, we have 
    \begin{align} \label{eq:sum_u1}
        (1_\numOfAgents^\trans \otimes I_n) \rl{u} &= - (1_\numOfAgents^\trans \otimes I_n) \left( \inci \otimes I_n \right) \rl{d} \nonumber \\
        & = - (1_\numOfAgents^\trans E \otimes I_n)\rl{d} = 0 .
    \end{align}
    Note that the left-hand side of \eqref{eq:sum_u1} can be rewritten as
    \begin{align} \label{eq:sum_u2}
        (1_n^\trans \otimes I_n) \rl{u} = \sum_{i \in \setOfAgents} \rl{u}_i.
    \end{align}
    Inserting~\eqref{eq:condition_equilibrium_agents} for the equilibrium of the agent systems into~\eqref{eq:sum_u2}, we obtain the optimality condition~\eqref{eq:optimality_condition}. Thus, together with the consensus established above, it holds that $\rl{y}_i = \opt{\yrm}$ $\forall i \in \setOfAgents$ in steady state. Since the controller outputs $d_k$ remain unspecified, any value $\rl{d} \in \R^{n\numOfControllers}$ may be an equilibrium. Thus, the equilibria are described by the manifold $\eqM$ as stated in the theorem.
\end{proof} 

\begin{remark}
    Note that $\eqM$ is a positively invariant set w.r.t.\ the networked system composed of systems~\eqref{eq:agents} and~\eqref{eq:controllers} interconnected by~\eqref{eq:interconnection}. 
\end{remark}

\begin{remark}
    Note that the equilibria of systems~\eqref{eq:agents} and~\eqref{eq:controllers} interconnected by~\eqref{eq:interconnection} are not unique, since $\rl{z}$ remains unspecified apart from $\rl{\zeta}_k = 0$. Depending on the initial values of the states $z_k$ and $x_i$, different equilibria arise in closed-loop operation. However, for all equilibria it holds that $\rl{y}_i = \opt{\yrm}$ $\forall i \in \setOfAgents$. 
\end{remark}

\begin{remark}
    Even if not included in the definition of the manifold $\eqM$, the equilibria for the controller inputs $\zeta_k$ are also uniquely determined by~\eqref{eq:condition_equilibrium_agents}, viz. $\zeta_k = 0$, as it becomes clear from the proof of Theorem~\ref{th:requirements_equilibria}. 
\end{remark}

\subsubsection{{Constrained Optimization}}
In the following, we state the design requirements for the agent and controller systems in the case of a constrained optimization problem as in~\eqref{eq:optimization_problem}. 

\begin{theorem} \label{th:requirements_equilibria_constrained}
    Consider systems~\eqref{eq:agents} and~\eqref{eq:controllers} interconnected by a connected communication graph~\eqref{eq:interconnection}. If the equilibria of the agent systems~\eqref{eq:agents}
    fulfill
    \begin{subequations} \label{eq:condition_equilibrium_agents_constrained}
        \begin{align}
            \rl{u}_i & = \nabla_{\yrm} \, L_i(\rl{y}_i, \rl{\lambda}_i, \rl{\mu}_i) \label{eq:condition_equilibrium_agents_constrained_1} \\
            g_{il}(\rl{y}_i) & \leq 0 \label{eq:condition_equilibrium_agents_constrained_2}   \\
            h_{ij}(\rl{y}_i) & = 0 \label{eq:condition_equilibrium_agents_constrained_3} \\
            \rl{\lambda}_{il}^* & \geq 0 \label{eq:condition_equilibrium_agents_constrained_4} \\
            \rl{\lambda}_{il}^*g_{il}(\rl{y}_i) & = 0 \label{eq:condition_equilibrium_agents_constrained_5}
        \end{align}
    \end{subequations}
    $\forall l \in \setOfIneq{i}$ and $\forall j \in \setOfEq{i}$, and the controllers~\eqref{eq:controllers} are systems with integral action as in Definition~\ref{def:integral_action}, all equilibria $(\rl{y}, \rl{d})$ of the networked system are in the manifold given by $\mathcal{M}_\text{eq} = \{ (\rl{y},\rl{d}) \in \R^n \times \R^n \mid \rl{y} = 1_\numOfAgents \otimes \opt{\yrm} \}$, where $\opt{\yrm}$ is the minimizer of the constrained problem~\eqref{eq:optimization_problem}.  
\end{theorem}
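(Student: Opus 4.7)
The plan is to mirror the structure of the proof of Theorem~\ref{th:requirements_equilibria}, then verify that the extra conditions~\eqref{eq:condition_equilibrium_agents_constrained_2}--\eqref{eq:condition_equilibrium_agents_constrained_5} reproduce the primal feasibility, dual feasibility and complementary slackness parts of the KKT system~\eqref{eq:KKT} once consensus has been established.

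First, I would exploit the integral action of the controllers $\Pi_k$ exactly as in Theorem~\ref{th:requirements_equilibria}: at any equilibrium of the networked system, Definition~\ref{def:integral_action} forces $\rl{\zeta}_k=0$ for all $k\in\setOfControllers$, so by~\eqref{eq:interconnection1} we get $(\inci\otimes I_n)^\trans \rl{y} = 0$. Connectedness of $\mathcal{G}$ together with Proposition~\ref{prop:kernel_kronecker} then gives $\rl{y}= 1_\numOfAgents \otimes a$ for some $a \in \R^n$, i.e., the agents are in consensus. Next, I would left-multiply~\eqref{eq:interconnection2} by $(1_\numOfAgents^\trans \otimes I_n)$ and use $1_\numOfAgents^\trans \inci = 0$ together with property P1 of the Kronecker product, producing $\sum_{i\in\setOfAgents} \rl{u}_i = 0$, just as in~\eqref{eq:sum_u1}--\eqref{eq:sum_u2}.

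Substituting the stationarity condition~\eqref{eq:condition_equilibrium_agents_constrained_1} into this sum and using $\rl{y}_i = a$ for all $i$ yields $\sum_{i\in\setOfAgents} \nabla_{\yrm} L_i(a,\rl{\lambda}_i,\rl{\mu}_i) = 0$, which is the aggregate stationarity condition in the global KKT system~\eqref{eq:KKT} evaluated at $\yrm = a$ with candidate multipliers $(\rl{\lambda}_i,\rl{\mu}_i)$. The remaining KKT ingredients are then read off directly from~\eqref{eq:condition_equilibrium_agents_constrained_2}--\eqref{eq:condition_equilibrium_agents_constrained_5}: because $\rl{y}_i = a$ for every $i$, the local primal feasibility $g_{il}(\rl{y}_i)\leq 0$ and $h_{ij}(\rl{y}_i) = 0$, dual feasibility $\rl{\lambda}_{il}\geq 0$, and complementary slackness $\rl{\lambda}_{il}\, g_{il}(\rl{y}_i)=0$ become exactly~\eqref{eq:condition_constraint_1}--\eqref{eq:condition_constraint_4} at the common point $a$. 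Invoking Slater's condition and strict convexity of~\eqref{eq:optimization_problem}, the KKT conditions are necessary and sufficient and the minimizer is unique, so $a = \opt{\yrm}$. The controller outputs $\rl{d}$ remain otherwise unconstrained, so the equilibrium set is contained in the manifold $\eqM$ as claimed.

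The step that requires the most care is the third one: one must be clear that the local, agent-wise multipliers $\rl{\lambda}_i, \rl{\mu}_i$ at equilibrium play the role of the global KKT multipliers $\lambda_i^*, \mu_i^*$, which works only because consensus has already been established in the first step. If consensus were absent, the local constraints would be evaluated at distinct points $\rl{y}_i$ and could not be concatenated into the global feasibility conditions~\eqref{eq:condition_constraint_1}--\eqref{eq:condition_constraint_4}. Apart from this conceptual point, the argument is a direct structural lift of the unconstrained proof, since the stationarity component in~\eqref{eq:condition_equilibrium_agents_constrained_1} is of the same form as~\eqref{eq:condition_equilibrium_agents} with $\nabla f_i$ replaced by $\nabla_{\yrm} L_i$.
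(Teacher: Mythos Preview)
Your proposal is correct and follows essentially the same approach as the paper's proof: invoke the integral action of the controllers and Proposition~\ref{prop:kernel_kronecker} to obtain consensus, reproduce~\eqref{eq:sum_u1}--\eqref{eq:sum_u2} to get $\sum_{i\in\setOfAgents}\rl{u}_i=0$, then insert~\eqref{eq:condition_equilibrium_agents_constrained_1} and combine with~\eqref{eq:condition_equilibrium_agents_constrained_2}--\eqref{eq:condition_equilibrium_agents_constrained_5} to recover the full KKT system~\eqref{eq:KKT} at the common point. Your explicit remark that consensus must precede the identification of the local multipliers with the global KKT multipliers is a useful clarification that the paper leaves implicit.
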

\begin{proof}
    This proof largely follows the proof of Theorem~\ref{th:requirements_equilibria} for the unconstrained case. Due to the interconnection~\eqref{eq:interconnection}, we obtain consensus on the outputs $y_i$ of the agents using Proposition~\ref{prop:kernel_kronecker} of Appendix~\ref{sec:appendix_proofs} and the integral action of the controller systems. Further, following the same procedure as in the proof of Theorem~\ref{th:requirements_equilibria}, we obtain~\eqref{eq:sum_u1}. Inserting condition~\eqref{eq:condition_equilibrium_agents_constrained_1} on the left side and taking into account that all outputs $\rl{y}_i$ are in consensus together with~\eqref{eq:condition_equilibrium_agents_constrained_2}-\eqref{eq:condition_equilibrium_agents_constrained_5}, we obtain the KKT optimality condition~\eqref{eq:KKT}.  
\end{proof}


Due to the proposed algorithm structure in Figure~\ref{fig:burger_structure}, the classic optimality conditions~\eqref{eq:optimality_condition} and~\eqref{eq:KKT} hold per design at steady state. Consequently, in contrast to the literature, no global initialization or other conditions have to be imposed to achieve global optimality. This implies that agents with optimization dynamics complying with the requirements given in~\eqref{eq:condition_equilibrium_agents} or~\eqref{eq:condition_equilibrium_agents_constrained} may leave or rejoin the networked optimization, while the remaining agents in the network automatically have an equilibrium at the new global optimum $\opt{\yrm}$ arising from the new network configuration. 

\begin{remark}\label{rem:nullspace}
	Note that in the proofs of Theorems~\ref{th:requirements_equilibria}~and~\ref{th:requirements_equilibria_constrained}, we only used the fact that the nullspace of the transposed communication incidence matrix $\inci^\trans$ is of dimension one and contains the vector $1_\numOfAgents$ to achieve consensus and global optimality. Thus, any matrix fulfilling this nullspace property can be used, which is studied in the next subsection. 
\end{remark}


%
%
\subsubsection{Communication Topology} \label{sec:requirements_communication}
At this point, we have posed design requirements on the agent and controller dynamics such that all equilibria correspond to the global optimizer $\opt{\yrm}$.
In this subsection, we study the effects of different agent communication structures on global optimality. First, we consider the case where a controller is connected to more than two agents. We show that such generalized communication structures exhibit a certain skew-symmetry property, and are thus amenable to common passivity theory. Afterwards, we extend our framework to allowing directed communication topologies, in which a controller does not have to send its state back to the agents from whom it received the estimations, but can instead send it to other agents in the network. Hence, we refer to such directed communication topologies as non-symmetric communication structures.

\paragraph{\textbf{Generalized Undirected Topology}}

Consider the interconnection equations~\eqref{eq:interconnection} in matrix form
\begin{align} \label{eq:communication_bipartite_incidence}
    \mat{-u \\ \zeta} = \mat{0 & \inci \otimes I_n \\ \left(\inci \otimes I_n\right)^\trans & 0} \mat{y \\ d}.
\end{align}
%
We observe that the matrix in~\eqref{eq:communication_bipartite_incidence} is symmetric, which implies a bidirectional or undirected communication topology, i.e., a controller that receives an output of agent $i \in \setOfAgents$ sends back its state to that agent $i$. This symmetry property is instrumental in the derivation of the Lyapunov function in Section~\ref{sec:convergence_undirected}. However, the only property of the communication incidence matrix $\inci^\trans$ we used in Theorems~\ref{th:requirements_equilibria}~and~\ref{th:requirements_equilibria_constrained} is that $\inci^\trans$ has a nullspace of dimension one spanned by the $1_\numOfAgents$ vector, i.e.,
\begin{equation}\label{eq:nullspace_property}
    \ker(\inci^\trans) = \left\{ \alpha 1_\numOfAgents \mid \alpha \in \R \right\}.
\end{equation}
(recall Remark~\ref{rem:nullspace}). The incidence matrices of connected graphs are, however, only a subset of the matrices with this nullspace property~\eqref{eq:nullspace_property}. Thus, we can readily use any matrix $R$ with the nullspace property~\eqref{eq:nullspace_property}, which is the only requirement for the communication structure. In particular, all the results of Theorems~\ref{th:requirements_equilibria}~and~\ref{th:requirements_equilibria_constrained} hold for such an $R$. Figure~\ref{fig:bipartite_symmetric} shows the effect of generalized communication for the graph structure. Observe that a controller is connected to more than two nodes, instead of being associated with an edge between only two agents as before. With the generalized communication structure, the second equation in~\eqref{eq:communication_bipartite_incidence} computes a weighted average of the outputs $y_i$. Such a communication matrix is called mixing matrix in~\cite{shi2015extra}. 
\begin{figure}[t]
    \begin{subfigure}[b]{0.2\textwidth}
        \centering
        \begin{align*}
            R = \mat{1 & 0 \\ -2 & 1 \\ 1 & -1}
        \end{align*}
        \caption{Communication matrix}
        \label{fig:bipartite_symmetric_adjacency_matrix}
    \end{subfigure}
    \hfill
    \begin{subfigure}[b]{0.27\textwidth}
        \centering
        \begin{tikzpicture}
	\begin{scope}[node distance={1.25cm and 1.25cm}, every node/.style={draw=black}, minimum size=6mm, outer sep=1pt, inner sep=2pt, on grid]
		\node[circle] (n1) at (0,0) {1};
		\node[circle, right=of n1, right=1.75](n2){2};
		\node[diamond, draw=mygreen, below=of n1](nA){A};
		\node[diamond, draw=mygreen, below=of n2](nB){B};
		\node[circle, below=0.75 of {$(nA)!0.5!(nB)$}](n3){3};
	\end{scope}
	
	\begin{scope}[dotted, thick, blue]
		\draw (n1) -- node[left] {\small 1} (nA);
		\draw (nA) -- node[pos=0.5, auto=right] {\small 1}  (n3);
		\draw (n2) -- node[pos=0.5, auto=right] {\small -2}  (nA.east);
		\draw (n2) -- node[pos=0.5, auto=left] {\small 1}  (nB);
		\draw (n3) -- node[pos=0.5, auto=right] {\small -1}  (nB);
	\end{scope}
\end{tikzpicture}
        \caption{Graph}
        \label{fig:bipartite_symmetric_graph}
    \end{subfigure}
    \caption{Network composed of 3 agents and 2 controllers with a generalized symmetric communication structure.}
        \label{fig:bipartite_symmetric}
\end{figure}
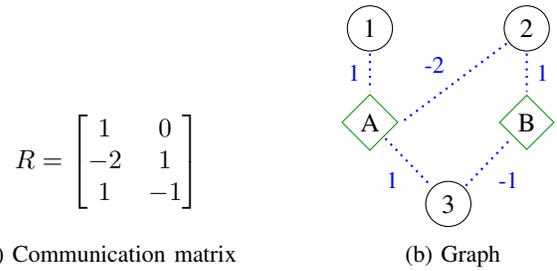
The next proposition provides a relation between the number of agents and controllers necessary to obtain the nullspace property and thus globally optimal equilibria. 
\begin{proposition}
    The network has to contain at least $\numOfAgents - 1$ controllers in order to fulfill the nullspace property. 
\end{proposition}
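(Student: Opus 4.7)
The plan is to argue by a direct rank-nullity dimension count applied to the generalized communication matrix $R$. Recall that $R \in \R^{\numOfAgents \times \numOfControllers}$ is a real matrix replacing the incidence matrix $\inci$, so $R^\trans$ is a linear map from $\R^{\numOfAgents}$ to $\R^{\numOfControllers}$, and the nullspace property~\eqref{eq:nullspace_property} demands $\dim \ker(R^\trans) = 1$ (with the kernel spanned by $1_\numOfAgents$).

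First I would invoke the rank-nullity theorem on $R^\trans$, which yields
\begin{equation*}
    \dim \ker(R^\trans) + \operatorname{rank}(R^\trans) = \numOfAgents.
\end{equation*}
Combined with the nullspace property, this forces $\operatorname{rank}(R^\trans) = \numOfAgents - 1$. Next I would use the elementary identity $\operatorname{rank}(R^\trans) = \operatorname{rank}(R)$, together with the standard bound $\operatorname{rank}(R) \leq \min(\numOfAgents, \numOfControllers)$. Putting these facts together gives $\numOfAgents - 1 \leq \numOfControllers$, which is exactly the claim.

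There is no real obstacle here; the argument is a one-line consequence of rank-nullity once one notices that the nullspace property pins down the rank of $R$. The only minor thing worth making explicit in the write-up is that the Kronecker-lifted matrix $R \otimes I_n$ inherits this rank condition from $R$ (via property~P1 in Appendix~\ref{sec:appendix_kronecker}, analogously to how it was used for $\inci \otimes I_n$ in the proof of Theorem~\ref{th:requirements_equilibria}), so that reducing to the scalar case is justified. I would also briefly remark that the bound is tight: any connected-tree incidence matrix attains $\numOfControllers = \numOfAgents - 1$, consistent with the classical fact that a spanning tree is the minimal connected communication backbone.
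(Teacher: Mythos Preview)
Your argument is correct and essentially identical to the paper's: both invoke rank--nullity on $R^\trans$ together with the elementary bound $\operatorname{rank}(R^\trans)\leq\numOfControllers$, the only cosmetic difference being that the paper phrases it as a contradiction (assuming $\numOfControllers<\numOfAgents-1$ forces $\dim\ker(R^\trans)\geq 2$) whereas you argue directly. Your remarks on the Kronecker lift and on tightness are extraneous to the proposition as stated but harmless.
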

\begin{proof}
    The nullspace property states that the communication matrix $R^\trans \in \R^{\numOfControllers\times\numOfAgents}$ should have a nullspace of dimension one spanned by the vector $1_\numOfAgents$. Assume $\numOfControllers < \numOfAgents-1$. Then, the rank of $R^\trans \in \R^{\numOfControllers\times \numOfAgents}$ is at most $\numOfAgents-2$, since it has at most $\numOfAgents-2$ rows. Due to the rank-nullity theorem~\cite{horn2012matrix}, having $\numOfAgents$ columns and a rank of $N-2$ implies that the kernel of $R^\trans$ must have dimension two, which violates the nullspace property.
\end{proof}

\paragraph{\textbf{Directed Graphs and Non-Symmetric Communication Structure}}

In this subsection, we explore the requirements on the communication topology when using a directed communication structure. In such a case, the information exchange may not be bidirectional between agents and controllers. This is described by a non-symmetric communication structure, composed of two different matrices $R_\setOfAgents$ and $R_\setOfControllers \in \R^{\numOfAgents\times \numOfControllers}$ of identical size, i.e.,
\begin{align} \label{eq:interconnection_nonsymmetric}
    \mat{-u \\ \zeta} = \mat{0 & R_\setOfControllers \otimes I_n \\ \left(R_\setOfAgents \otimes I_n\right)^\trans & 0} \mat{y \\ d}.
\end{align}
The matrix $R_\setOfAgents$ describes the information flow from agents to controllers, $R_\setOfControllers$ from controllers to agents. An example is given in Figure~\ref{fig:bipartite_nonsymmetric}. The directed communication from agents to controllers described by $R_\setOfAgents$ is displayed in blue, and the directed communication from controllers to agents $R_\setOfControllers$ in red. Observe that an agent may send its output $y_i$ to a controller which does not send its integrator state back. As before, the only requirement for global optimality in Theorems~\ref{th:requirements_equilibria}~and~\ref{th:requirements_equilibria_constrained} is that the communication matrices $R_\setOfAgents$ and $R_\setOfControllers$ have, individually, the stated nullspace property~\eqref{eq:nullspace_property}. Graphically, the nullspace property of $R_\setOfAgents$ means that the weights of incoming edges at any controller have to sum up to zero. Similarly, the nullspace property of $R_\setOfControllers$ means that the weights of the outgoing edges at the controller have to sum up to zero (see~Figure~\ref{fig:bipartite_nonsymmetric}).   

The main issue with the directed communication topology is that it does not generally translate into a skew-symmetric interconnection. This is, however, crucial for a direct passivity-based stability analysis. The matrix in~\eqref{eq:interconnection_nonsymmetric} describes a non-symmetric adjacency matrix of a bipartite graph. In order to cope with that, we will need a greater excess of \gls{eip} on the agent dynamics, as is explained in Section~\ref{sec:convergence_directed} in detail. 
\begin{figure}[t]
    \begin{subfigure}[b]{0.2\textwidth}
        \centering
        \begin{align*}
            R_\setOfAgents = \mat{1 & 0 \\ -2 & 1 \\ 1 & -1} \\ \\
            R_\setOfControllers = \mat{0 & 1 \\ 2 & 0 \\ -2 & -1} 
        \end{align*}
        \caption{Communication matrices}
        \label{fig:bipartite_nonsymmetric_adjacency_matrix}
    \end{subfigure}
    \hfill
    \begin{subfigure}[b]{0.27\textwidth}
        \centering
        \begin{tikzpicture}
	\begin{scope}[node distance={1.75cm and 1.75cm}, every node/.style={draw=black}, minimum size=6mm, outer sep=1pt, inner sep=2pt, on grid]
		\node[circle] (n1) at (0,0) {1};
		\node[circle, right=of n1, right=1.9](n2){2};
		\node[diamond, draw=mygreen, below=of n1](nA){A};
		\node[diamond, draw=mygreen, below=of n2](nB){B};
		\node[circle, below=1 of {$(nA)!0.5!(nB)$}](n3){3};
	\end{scope}
	
	\begin{scope}[dotted, thick, blue,  bend left=15, -latex, every node/.style={scale=0.8, pos=0.8, minimum size=0pt}]
		\draw (n1) to[bend right=15] node[pos=0.5, auto=left] {\small 1} (nA);
		\draw (n3) to node[pos=0.5, auto=left] {\small 1}  (nA);
		\draw (n2) to node[pos=0.25, auto=left] {\small -2}  (nA.east);
		\draw (n2) to node[pos=0.5, auto=left] {\small 1}  (nB);
		\draw (n3) to node[pos=0.5, auto=left] {\small -1}  (nB);
	\end{scope}
	
	\begin{scope}[dotted, thick, red,  bend left=15, -latex, every node/.style={scale=0.8, pos=0.8, minimum size=0pt, auto=left}]
		\draw (nA) to node[pos=0.8, auto=left] {\small 2}  (n2);
		\draw (nA) to node[pos=0.5, auto=left] {\small -2}  (n3);
		\draw (nB) to node[pos=0.2, auto=left] {\small 1}  (n1);
		\draw (nB) to node[pos=0.5, auto=left] {\small -1}  (n3);
	\end{scope}
	
\end{tikzpicture}
        \caption{Graph}
        \label{fig:bipartite_nonsymmetric_graph}
    \end{subfigure}
    \caption{Network composed of 3 agents and 2 controllers with a directed, non-symmetric communication structure.}
        \label{fig:bipartite_nonsymmetric}
\end{figure}
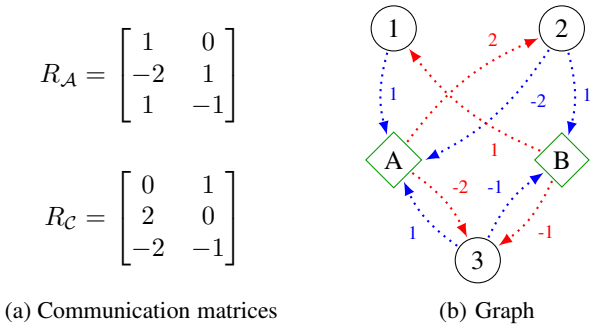
\begin{remark}
    Note that in both undirected and directed communication structures, the nullspace property can be trivially fulfilled individually by the controllers and does not require any global coordination. Provided that all agents send their output $y_i$, each controller can assign individually a weight to each incoming (or, similarly, outgoing) edge such that the sum is zero. 
\end{remark}

In the next subsection, we state requirements for the agent and controller dynamics to converge to the global optimizers characterized in this subsection. 

\subsection{Local Design Requirements for Convergence with Undirected Communication Topologies} \label{sec:requirements_convergence} 


The design requirements we pose on the agent and controller systems to ensure convergence to the global optimizers characterized in Section~\ref{sec:requirements_optimality} are, in a broad sense, \gls{eip} properties~\cite{simpson2018equilibrium,arcak2016networks}. Depending on the scenario, i.e., unconstrained or constrained optimization with undirected or directed communication topologies, different \gls{eip} properties are necessary. The following theorem provides local design requirements for achieving convergence in the case of undirected communication topologies.


\begin{theorem} \label{th:requirements_convergence}
    Let the agents $i \in \setOfAgents$ and controller systems $k \in \setOfControllers$ fulfill the design requirements of Theorems~\ref{th:requirements_equilibria} or~\ref{th:requirements_equilibria_constrained} and Definition~\ref{def:integral_action}, respectively. Further, let the agents be interconnected by a communication structure~\eqref{eq:communication_bipartite_incidence} fulfilling property~\eqref{eq:nullspace_property}. In addition, let the agent systems $i \in \setOfAgents$ be \gls{eio} and \gls{oseip}$(\ofpIdx_i)$ with $\ofpIdx_i>0$ and positive definite storage function $S_i$, and the controller systems $k \in \setOfControllers$ be \gls{eio} and \gls{eip} with positive definite storage function $W_k$. Then, all trajectories of the networked system converge to a point in the invariant equilibrium manifold $\eqM$ as specified in Theorem~\ref{th:requirements_equilibria} or~\ref{th:requirements_equilibria_constrained}, which implies that the estimation $y_i$ of all agents converges to the global optimizer $\opt{\yrm}$.   
\end{theorem}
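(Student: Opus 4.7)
The plan is a standard passivity-plus-LaSalle argument that exploits the bidirectional interconnection~\eqref{eq:communication_bipartite_incidence} to cancel the cross terms between agents and controllers. Pick any equilibrium $(\rl{x},\rl{z}) \in \R^{\numOfAgents n}\times\R^{\numOfControllers n}$ of the closed loop (its existence is guaranteed by Theorem~\ref{th:requirements_equilibria} or~\ref{th:requirements_equilibria_constrained}), with associated $\rl{y},\rl{u},\rl{d},\rl{\zeta}$, and define the composite storage function
\begin{align*}
    V(x,z) \;=\; \sum_{i\in\setOfAgents} S_i(x_i,\rl{x}_i) \;+\; \sum_{k\in\setOfControllers} W_k(z_k,\rl{z}_k).
\end{align*}
Since each $S_i$ and $W_k$ is positive definite in the error variables, $V$ is a valid positive-definite Lyapunov candidate in $(\err{x},\err{z})$.

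Next, I would differentiate $V$ along the closed-loop dynamics. The \gls{oseip}$(\ofpIdx_i)$ property of each $\Sigma_i$ gives $\nabla_{x_i}S_i^\trans \chi_i(x_i,u_i) \leq \err{u}_i^\trans \err{y}_i - \ofpIdx_i\,\err{y}_i^\trans\err{y}_i$, and the \gls{eip} property of each $\Pi_k$ gives $\nabla_{z_k}W_k^\trans \Phi_k(z_k,\zeta_k) \leq \err{\zeta}_k^\trans \err{d}_k$. Summing and using the linearity of the interconnection~\eqref{eq:interconnection} to write the error interconnection $\err{u} = -(\inci\otimes I_n)\err{d}$ and $\err{\zeta} = (\inci\otimes I_n)^\trans \err{y}$, the aggregate cross term collapses:
\begin{align*}
    \err{u}^\trans \err{y} + \err{\zeta}^\trans \err{d}
    \;=\; -\err{d}^\trans(\inci\otimes I_n)^\trans \err{y} + \err{y}^\trans(\inci\otimes I_n)\err{d} \;=\; 0,
\end{align*}
which is exactly where the symmetry of~\eqref{eq:communication_bipartite_incidence} is used. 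Hence
\begin{align*}
    \dot{V} \;\leq\; -\sum_{i\in\setOfAgents}\ofpIdx_i\,\err{y}_i^\trans \err{y}_i \;\leq\; 0,
\end{align*}
so trajectories are bounded and $V$ is non-increasing.

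Finally, I would invoke LaSalle's invariance principle. The set $\{\dot{V}=0\}$ forces $\err{y}_i \equiv 0$ for every agent, i.e.\ $y_i(t)\equiv\rl{y}_i$. Along any trajectory in the largest invariant subset, $\zeta_k=(\inci\otimes I_n)^\trans y$ is then constant and in the kernel of the interconnection, so $\err{\zeta}_k\equiv 0$; because each $\Pi_k$ has integral action (Definition~\ref{def:integral_action}) and is \gls{eio}, this pins its state and output, giving $d_k\equiv\rl{d}_k$ and hence $u_i\equiv\rl{u}_i$. With $u_i,y_i$ both at their equilibrium values, the \gls{eio} property of $\Sigma_i$ forces $x_i\equiv\rl{x}_i$. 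Therefore the largest invariant set is contained in the equilibrium manifold $\eqM$, and by Theorem~\ref{th:requirements_equilibria}/\ref{th:requirements_equilibria_constrained} every point in this set has $y_i=\opt{\yrm}$, yielding the claimed convergence of all estimations to $\opt{\yrm}$.

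The main obstacle I anticipate is the LaSalle step: on $\{\err y=0\}$ one must sequentially propagate the equilibrium information through the controllers (using integral action plus \gls{eio}) and then back into the agents (using \gls{eio} with the now-constant $u_i$), being careful that boundedness of trajectories (which LaSalle requires) follows from radial unboundedness of $V$ in $(\err x,\err z)$; radial unboundedness is inherited from the positive-definiteness assumptions on $S_i$ and $W_k$ together with the error-coordinate structure, but this point deserves explicit mention in a fully rigorous write-up.
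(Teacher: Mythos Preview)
Your proposal is correct and follows essentially the same approach as the paper: the composite storage function $V=\sum_i S_i+\sum_k W_k$, the cross-term cancellation $\err{u}^\trans\err{y}+\err{\zeta}^\trans\err{d}=0$ via the symmetric interconnection, the bound $\dot V\le -\sum_i\ofpIdx_i\Vert\err{y}_i\Vert^2$, and the LaSalle/\gls{eio} conclusion are precisely what the paper does. Your write-up of the LaSalle step (propagating $\err\zeta=0$ through the controllers and back into the agents) is in fact more explicit than the paper's, which compresses that step into a single sentence.
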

\begin{proof}
    Consider the network-wide Lyapunov function
    \begin{align} \label{eq:lyapunov}
        V = \sum_{i \in \setOfAgents} S_i + \sum_{k \in \setOfControllers} W_k 
    \end{align}
    and the time derivative using the \gls{oseip}$(\ofpIdx_i)$ and \gls{eip} properties
    \begin{align}
        \dot{V} & \leq \sum_{i \in \setOfAgents} \left( \err{u}_i^\trans \err{y}_i - \ofpIdx_i \err{y}_i^\trans \err{y}_i \right) + \sum_{k \in \setOfControllers}  \err{\zeta}_k^\trans \err{d}_k . \nonumber
    \end{align}
    Written in stacked network variables, we have
    \begin{align}
        \dot{V} &\leq \err{u}^\trans \err{y} - \err{y}^\trans \left(\diag{\ofpIdx}\otimes I_n \right) \err{y} + \err{\zeta}^\trans \err{d}   \nonumber \\
        & = - \err{y}^\trans K \err{y} ,
    \end{align}
    where the last equality follows by inserting the interconnection topology~\eqref{eq:communication_bipartite_incidence} into the first and third term, and introducing $K = \diag{\ofpIdx}\otimes I_n$. The matrix $K$ is positive definite, since $\ofpIdx_i >0$. Invoking Lasalle's Invariance Theorem, all trajectories converge to $\err{y} = 0$, which corresponds to the manifold $\eqM$, and $y_i = \opt{\yrm}$. Since all agents are in consensus, $\zeta_k$ is zero and with \gls{eio}, all $z_k$, $k\in\setOfControllers$ and $x_i$, $i\in\setOfAgents$ are constant.  
\end{proof}
%
Note that Theorem~\ref{th:requirements_convergence} covers both unconstrained and constrained optimization. The state-of-the-art algorithms for the constrained case in continuous time, however, comprise discontinuous agent dynamics~\cite{hatanaka2018passivity,cherukuri2016asymptotic} or differential inclusions~\cite{liu_SecondOrderMultiAgent_2015,zeng_DistributedContinuousTime_2017}. In this case, Theorem~\ref{th:requirements_convergence} can still be applied to such dynamics using generalized gradients and Lyapunov theory for discontinuous dynamical systems (e.g.~\cite[Th.~1]{cortes2008discontinuous} or~\cite[Lemma~1]{bacciotti2006nonpathological}). To do so, the following additional assumption needs to hold (see Section~\ref{sec:constrained_optimization} for details):
\begin{assumption}  \label{assum:existence_solution}
	Consider a discontinuous dynamical system composed of agents $i \in \setOfAgents$ and controllers $k\in\setOfControllers$ interconnected by a communication structure~\eqref{eq:communication_bipartite_incidence}. For all admissible initial states, a solution of the discontinuous dynamical system exists. 
\end{assumption}
\begin{remark}
	We state Assumption~\ref{assum:existence_solution} as an additional requirement instead of merging it into Theorem~\ref{th:requirements_convergence}, since technically, the constrained case does not necessitate discontinuous agent dynamics. Continuous agent dynamics for a constrained, distributed optimization problem as in \eqref{eq:optimization_problem} may be conceivable, but, to the best of the authors' knowledge, have not been investigated yet. 
\end{remark}

In addition, note that the global optimality design requirements in Theorems~\ref{th:requirements_equilibria}~and~\ref{th:requirements_equilibria_constrained} are local properties for each agent and controller subsystem. Also, \gls{oseip} and \gls{eip} are per design local subsystem properties. Therefore, with the proposed framework, there is no need for all agents to follow a specific, given algorithm. Networks composed of agents with heterogeneous optimization dynamics converge to the global optimizer, provided that the local design requirements for global optimality and convergence given in Theorems~\ref{th:requirements_equilibria}, \ref{th:requirements_equilibria_constrained}, and \ref{th:requirements_convergence}, as well as the nullspace requirement \eqref{eq:nullspace_property} for the communication structure are satisfied. 
This makes our distributed optimization framework easily scalable and enables agents to leave or (re)join the optimization without compromising global optimality and convergence. 
%
%
\begin{remark}
	The requirement of \gls{oseip} agent dynamics may be restrictive in some cases and can be relaxed for both the unconstrained and constrained problem with an undirected communication topology under minor additional information on the agent dynamics (see Section~\ref{sec:convergence_undirected}). 
\end{remark}
%


\section{Agent and Controller Dynamics} \label{sec:agent_controller_dynamics}

In this section, we propose different agent and controller dynamics and analyze their \gls{eip} properties. Different \gls{eip} properties are necessary for satisfying the requirements established in Section~\ref{sec:distributed_opt} and ensuring global optimality and convergence in different scenarios (i.e. the analyses in Sections~\ref{sec:convergence_undirected} and~\ref{sec:convergence_directed}). 

\subsection{Agent Dynamics for Unconstrained Optimization}

Consider the system dynamics for the agents 
\begin{align} \label{eq:agents_concrete}
    \Sigma_i: 
    \begin{array}{cl}
        \dot{x}_i & = -\alpha_i \nabla f_i(x_i) + \alpha_i u_i \\
        y_i & = x_i,
    \end{array}
\end{align}
where $\alpha_i > 0$ is a design parameter. Condition~\eqref{eq:condition_equilibrium_agents} of Theorem~\ref{th:requirements_equilibria} is trivially fulfilled. The EIP properties of~\eqref{eq:agents_concrete} are studied in the next proposition.

\begin{proposition} \label{prop:agents_oseip}
    Consider the storage function $S_i(\err{x}_i) = \frac{1}{2\alpha_i}\err{x}^\top_i \err{x}_i$. System~\eqref{eq:agents_concrete} is \gls{eip}, if $f_i$ is convex, and \gls{oseip}$(\convIdx_i)$, if $f_i$ is $\convIdx_i$-strongly convex.  
\end{proposition}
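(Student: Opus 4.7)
The plan is to verify the equilibrium-independent dissipation inequality directly by differentiating the proposed quadratic storage function $S_i$ along trajectories and invoking the convexity characterization~\eqref{eq:convexity_differentiable}.

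First, I would note that for any equilibrium $\rl{x}_i$ of \eqref{eq:agents_concrete}, setting $\dot{x}_i = 0$ yields $\rl{u}_i = \nabla f_i(\rl{x}_i) = \nabla f_i(\rl{y}_i)$, which is consistent with condition~\eqref{eq:condition_equilibrium_agents} and thus defines a valid set $\setEq$ with a unique equilibrium input for every $\rl{x}_i$. Because the output map is the identity, the error variables coincide, i.e.\ $\err{y}_i = \err{x}_i$. The storage function $S_i(\err{x}_i) = \frac{1}{2\alpha_i}\err{x}_i^\trans \err{x}_i$ clearly satisfies $S_i(0) = 0$, $S_i(\err{x}_i) \geq 0$, and is positive definite in $\err{x}_i$ since $\alpha_i > 0$.

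Next, I would differentiate $S_i$ along \eqref{eq:agents_concrete}, substitute the dynamics, and then add and subtract $\nabla f_i(\rl{x}_i)$ together with $\rl{u}_i$ to obtain
\begin{align*}
    \dot{S}_i &= \tfrac{1}{\alpha_i}\err{x}_i^\trans \dot{x}_i = \err{x}_i^\trans \bigl(-\nabla f_i(x_i) + u_i\bigr) \\
    &= -\err{x}_i^\trans \bigl(\nabla f_i(x_i) - \nabla f_i(\rl{x}_i)\bigr) + \err{x}_i^\trans \err{u}_i.
\end{align*}
Applying \eqref{eq:convexity_differentiable} with $x = \rl{x}_i$, $y = x_i$ gives $\err{x}_i^\trans(\nabla f_i(x_i) - \nabla f_i(\rl{x}_i)) \geq \convIdx_i \err{x}_i^\trans \err{x}_i$, hence, using $\err{y}_i = \err{x}_i$,
\begin{equation*}
    \dot{S}_i \leq \err{u}_i^\trans \err{y}_i - \convIdx_i \err{y}_i^\trans \err{y}_i.
\end{equation*}

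Finally, I would match this inequality to the \gls{eip} definitions. For convex $f_i$ we may set $\convIdx_i = 0$, yielding the passive supply rate and hence \gls{eip}. For $\convIdx_i$-strongly convex $f_i$, the same bound delivers the \gls{oseip}$(\convIdx_i)$ supply rate with $\ofpIdx_i = \convIdx_i$, $\ifpIdx_i = 0$, and $\Psi \equiv 0$. No real obstacle is anticipated in this proof; the entire argument is a single application of the convexity inequality to the derivative of a quadratic storage function, and the main care required is only in bookkeeping the equilibrium shifts so that the calculation is truly equilibrium-independent over all $\rl{x}_i \in \setEq$.
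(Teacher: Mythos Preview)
Your proof is correct and follows essentially the same approach as the paper's own proof: differentiate the quadratic storage, use the equilibrium relation $\rl{u}_i=\nabla f_i(\rl{x}_i)$ to shift to error variables, and then invoke the monotonicity inequality~\eqref{eq:convexity_differentiable}. The only cosmetic difference is that the paper phrases the shift as ``adding the zero term $\err{x}_i^\trans(\nabla f_i(\rl{x}_i)-\rl{u}_i)$'' rather than ``adding and subtracting,'' which is the same manipulation.
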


\begin{proof}
    Consider the time derivative of the storage function 
    \begin{align}
        \dot{S}_i &= \frac{1}{\alpha_i}\err{x}_i^\trans \dot{x}_i \\
        & = \frac{1}{\alpha_i} \err{x}_i^\trans (-\alpha_i \nabla f_i(x_i) + \alpha_i u_i). \label{eq:storage_function_derivative1}
    \end{align}
    Next, add $\err{x}_i^\trans \left( \nabla f_i(\rl{x}_i) - \rl{u}_i \right)$, which equals to zero for any equilibrium $\rl{x}_i$, $\rl{u}_i$, to~\eqref{eq:storage_function_derivative1} to obtain
    \begin{align}
        \dot{S}_i &=  \err{x}_i^\trans (- \nabla f_i(x_i) +  u_i) - \err{x}_i^\trans (- \nabla f_i(\rl{x}_i) +  \rl{u}_i) \nonumber \\
        & = \err{y}_i^\trans \err{u}_i - \err{y}_i^\trans (\nabla f_i(y_i) - \nabla f_i(\rl{y}_i)). \label{eq:storage_function_derivative2}
    \end{align}
    Taking into account the convexity condition for differentiable functions~\eqref{eq:convexity_differentiable}, we obtain from~\eqref{eq:storage_function_derivative2} $\dot{S}_i \leq \err{y}_i^\trans \err{u}_i$, if $f_i$ is convex, which shows \gls{eip} of~\eqref{eq:agents_concrete}. Furthermore, we obtain from~\eqref{eq:storage_function_derivative2} $\dot{S}_i \leq \err{y}_i^\trans \err{u}_i - \convIdx_i \err{y}_i^\trans \err{y}_i$ if $f_i: \R^n \rightarrow \R$ is $\convIdx_i$-strongly convex, which proves \gls{oseip}$(\convIdx_i)$ of~\eqref{eq:agents_concrete}.   
\end{proof}

\begin{corollary} \label{cor:agents_eid}
    The system dynamics of the agents~\eqref{eq:agents_concrete} is also \gls{eip} with supply rate $w(\err{u}_i, \err{y}_i) = \err{y}_i^\trans \err{u}_i - \Psi_i(\err{y}_i)$ with $\Psi_i(\err{y}_i) =  \err{y}_i^\trans (\nabla f_i(y_i) - \nabla f_i(\rl{y}_i))$, as follows directly from~\eqref{eq:storage_function_derivative2}. Note that $\Psi_i(\err{y}_i) \geq 0$ if the function $f_i:\R^n \rightarrow \R$ is convex. 
\end{corollary}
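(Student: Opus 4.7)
The plan is to recognize that this corollary is simply a re-grouping of the same computation already performed in the proof of Proposition~\ref{prop:agents_oseip}, packaged into the form of a passivity inequality with an explicit position-dependent dissipation term instead of an output-feedback term. I would not redo the storage-function derivative calculation; instead, I would point to equation~\eqref{eq:storage_function_derivative2}, which already reads
\begin{equation*}
    \dot{S}_i = \err{y}_i^\trans \err{u}_i - \err{y}_i^\trans \bigl( \nabla f_i(y_i) - \nabla f_i(\rl{y}_i) \bigr).
\end{equation*}
This has precisely the structure $\dot{S}_i \leq \err{u}_i^\trans \err{y}_i - \Psi_i(\err{y}_i)$ with $\Psi_i(\err{y}_i) = \err{y}_i^\trans ( \nabla f_i(y_i) - \nabla f_i(\rl{y}_i) )$, so equilibrium-independent dissipativity with the claimed supply rate follows immediately once we verify that $\Psi_i$ is nonnegative.

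Nonnegativity of $\Psi_i$ is where I would actually do some work, although it is a one-liner. I would invoke the convexity characterization~\eqref{eq:convexity_differentiable} with $\convIdx = 0$, applied to the pair $(y, x) = (y_i, \rl{y}_i)$, which yields
\begin{equation*}
    \bigl(\nabla f_i(y_i) - \nabla f_i(\rl{y}_i)\bigr)^\trans (y_i - \rl{y}_i) \geq 0,
\end{equation*}
and this is exactly $\Psi_i(\err{y}_i) \geq 0$ since $\err{y}_i = y_i - \rl{y}_i$. Hence $\Psi_i$ is a positive semidefinite function of $\err{y}_i$, confirming that the supply rate $w(\err{u}_i,\err{y}_i) = \err{u}_i^\trans \err{y}_i - \Psi_i(\err{y}_i)$ fits the \gls{eip} template from the definitions in Section~\ref{sec:problem_formulation_preliminaries}.

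There is essentially no obstacle here, which matches the status of the statement as a corollary rather than a theorem. The only subtlety worth a sentence in the write-up is that $\Psi_i$ is expressed as a function of $\err{y}_i$ even though it also involves $\nabla f_i$ evaluated at both $y_i$ and $\rl{y}_i$; this is consistent with the \gls{eip} framework, where the storage function and dissipation rate are allowed to depend on the equilibrium $\rl{x}_i$ (equivalently $\rl{y}_i$, since $y_i = x_i$ here). I would close by noting that this alternative supply-rate form will be useful later, since in contrast to the \gls{oseip}$(\convIdx_i)$ form of Proposition~\ref{prop:agents_oseip} it does not require strong convexity and retains a dissipation term even when $f_i$ is merely convex.
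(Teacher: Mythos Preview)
Your proposal is correct and matches the paper's approach exactly: the paper does not give a separate proof for this corollary, since it already states that it ``follows directly from~\eqref{eq:storage_function_derivative2}'', and your write-up simply spells out that observation together with the nonnegativity of $\Psi_i$ via~\eqref{eq:convexity_differentiable}. There is nothing to add or change.
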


In some cases, agent dynamics having additionally an excess of input passivity may be necessary, e.g., when using directed communication topologies (c.f.\ Section~\ref{sec:convergence_directed}). Therefore, we propose the modified agent dynamics
\begin{align} \label{eq:agents_durchgriff}
    \Sigma_i: 
    \begin{array}{cl}
        \dot{x}_i & = -\alpha_i \nabla f_i(x_i + \feedt_i u_i) + \alpha_i u_i \\
        y_i & = x_i + \feedt_i u_i,
    \end{array}
\end{align}
with $\feedt_i \in \R_{\geq 0}$. Note that the dynamics~\eqref{eq:agents_durchgriff} fulfill the equilibrium requirement of Theorem~\ref{th:requirements_equilibria}. This particular agent dynamics exhibits a direct feedthrough from input to output, which is expected to induce an excess of input-feedforward \gls{eip}, as shown analytically for a single integrator in~\cite{sepulchre2012constructive}. This is formally proven in the next proposition.  

\begin{proposition} \label{prop:agents_oseip_iseip}
    Consider the storage function $S_i(\err{x}_i) = \frac{1}{2 \alpha_i} \err{x}_i^\trans \err{x}_i$. System~\eqref{eq:agents_durchgriff} is \isoseip$(\ifpIdx_i,\ofpIdx_i)$ w.r.t.\ the supply rate $w(\err{u}_i,\err{y}_i) = \err{y}_i^\trans \err{u}_i - \ifpIdx_i \err{u}_i^\trans \err{u}_i - \ofpIdx_i \err{y}_i^\trans \err{y}_i$, with $\ifpIdx_i=\frac{1}{2} \feedt_i$ and $\ofpIdx_i=\convIdx_i - \frac{1}{2} \feedt_i \lipIdx_i$, if the objective function $f_i:\R^n \rightarrow \R$ is $\convIdx_i$-strongly convex and has $\lipIdx_i$-Lipschitz gradients. 
\end{proposition}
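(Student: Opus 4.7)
The plan is to mirror the structure of Proposition~\ref{prop:agents_oseip} and exploit the direct feedthrough $y_i = x_i + \feedt_i u_i$ to manufacture the additional excess of input passivity. I would start from the same quadratic storage function $S_i(\err{x}_i) = \frac{1}{2\alpha_i}\err{x}_i^\trans \err{x}_i$ and compute
\begin{align*}
\dot{S}_i = \frac{1}{\alpha_i}\err{x}_i^\trans \dot{x}_i = \err{x}_i^\trans\bigl(-\nabla f_i(y_i) + u_i\bigr).
\end{align*}
Since any equilibrium of~\eqref{eq:agents_durchgriff} satisfies $\rl{u}_i = \nabla f_i(\rl{y}_i)$ (c.f.\ Theorem~\ref{th:requirements_equilibria}), the zero term $-\err{x}_i^\trans(-\nabla f_i(\rl{y}_i) + \rl{u}_i)$ can be added to produce $\dot{S}_i = \err{x}_i^\trans \err{u}_i - \err{x}_i^\trans g_i$, where I abbreviate $g_i := \nabla f_i(y_i) - \nabla f_i(\rl{y}_i)$.

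The key novelty compared with Proposition~\ref{prop:agents_oseip} is that now the output error satisfies $\err{y}_i = \err{x}_i + \feedt_i \err{u}_i$. Substituting $\err{x}_i = \err{y}_i - \feedt_i \err{u}_i$ yields
\begin{align*}
\dot{S}_i = \err{y}_i^\trans \err{u}_i - \feedt_i \err{u}_i^\trans \err{u}_i - \err{y}_i^\trans g_i + \feedt_i \err{u}_i^\trans g_i.
\end{align*}
The first term is already the passivity supply rate; the second contributes a full $\feedt_i \err{u}_i^\trans \err{u}_i$ of input-passivity excess; and the third gives, by $\convIdx_i$-strong convexity of $f_i$, the bound $-\err{y}_i^\trans g_i \leq -\convIdx_i \err{y}_i^\trans \err{y}_i$, which furnishes the bulk of the output-passivity excess.

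The main obstacle is the mixed cross term $\feedt_i \err{u}_i^\trans g_i$ introduced by the feedthrough, which couples input and output errors and has no counterpart in Proposition~\ref{prop:agents_oseip}. I would dispatch it with Young's inequality combined with the Lipschitz bound $\|g_i\| \leq \lipIdx_i \|\err{y}_i\|$, i.e.\
\begin{align*}
\feedt_i \err{u}_i^\trans g_i \;\leq\; \tfrac{\feedt_i}{2}\,\err{u}_i^\trans \err{u}_i \;+\; \tfrac{\feedt_i}{2}\,\|g_i\|^2 / 1,
\end{align*}
with the Young weight chosen so that the $\err{u}_i$-quadratic contribution exactly consumes half of the $\feedt_i \err{u}_i^\trans \err{u}_i$ excess, leaving the stated input-passivity index $\ifpIdx_i = \tfrac{1}{2}\feedt_i$. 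The remaining quadratic-in-$\err{y}_i$ contribution, bounded via Lipschitzness of $\nabla f_i$, is absorbed into the strong-convexity term to give the reduced output-passivity index $\ofpIdx_i = \convIdx_i - \tfrac{1}{2}\feedt_i \lipIdx_i$. Collecting the bounds yields $\dot{S}_i \leq \err{y}_i^\trans \err{u}_i - \ifpIdx_i \err{u}_i^\trans \err{u}_i - \ofpIdx_i \err{y}_i^\trans \err{y}_i$, which is precisely the \isoseip$(\ifpIdx_i,\ofpIdx_i)$ supply rate.

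As a sanity check, setting $\feedt_i = 0$ eliminates the cross term and recovers Proposition~\ref{prop:agents_oseip} verbatim, with $\ifpIdx_i = 0$ and $\ofpIdx_i = \convIdx_i$. The result then transparently exhibits the design trade-off induced by $\feedt_i$: the feedthrough buys an input-passivity excess of $\feedt_i/2$ at the cost of shrinking the output-passivity excess by $\feedt_i \lipIdx_i/2$, which is what will later make such agent dynamics usable under the directed-topology analysis of Section~\ref{sec:convergence_directed}.
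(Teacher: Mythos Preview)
Your proposal is correct and follows essentially the same route as the paper's proof: compute $\dot S_i$, subtract the equilibrium relation, substitute $\err{x}_i=\err{y}_i-\feedt_i\err{u}_i$, bound $-\err{y}_i^\trans g_i$ via strong convexity, and bound the cross term $\feedt_i\err{u}_i^\trans g_i$ by Young's inequality followed by the Lipschitz estimate on $g_i$. The only cosmetic difference is that the paper packages the Young step as the observation $\tfrac{1}{\alpha_i^2}\dot{\err{x}}_i^\trans\dot{\err{x}}_i=\|\err{u}_i-\nabla\err{f}_i\|^2\ge 0$, which is exactly the inequality you invoke by name.
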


\begin{proof}
    Consider the derivative of the storage function 
    \begin{align}
        \dot{S}_i &= \frac{1}{\alpha_i} \err{x}_i^\trans \dot{x}_i \nonumber \\
        & = \frac{1}{\alpha_i} \err{x}_i^\trans \left( -\alpha_i \nabla f_i(x_i + \feedt_i u_i) + \alpha_i u_i  \right) \nonumber \\
        & = \err{x}_i^\trans u_i - \err{x}_i^\trans \nabla f_i(x_i + \feedt_i u_i) \label{eq:ifp_storage_function_derivative}
    \end{align}
    Next, add $\err{x}_i^\trans \left( \nabla f_i(\rl{x}_i + \feedt_i \rl{u}_i) - \rl{u}_i  \right)$, which equals to zero, to~\eqref{eq:ifp_storage_function_derivative} and obtain
    \begin{align}
        \dot{S}_i &= \err{x}_i^\trans \err{u}_i - \err{x}_i^\trans \left(\nabla f_i(x_i + \feedt_i u_i) - \nabla f_i(\rl{x}_i+ \feedt_i \rl{u}_i)\right). \label{eq:ifp_storage_function_derivative2}
    \end{align}
    Substituting $y_i = x_i + \feedt_i u_i$ and $\err{x}_i = \err{y}_i - \feedt_i \err{u}_i$ in~\eqref{eq:ifp_storage_function_derivative2} and defining for convenience $\err{f}_i(y_i,\rl{y}_i) := f_i(y_i) - f_i(\rl{y}_i)$ we have
    \begin{align}
        \dot{S}_i &= \left(\err{y}_i-\feedt_i \err{u}_i\right)^\trans \err{u}_i - \left(\err{y}_i - \feedt_i \err{u}_i\right)^\trans \nabla \err{f}_i(y_i,\rl{y}_i) \nonumber \\
        & = \err{y}_i^\trans \err{u}_i - \feedt_i \err{u}_i^\trans \err{u}_i - \err{y}_i^\trans \nabla \err{f}_i(y_i,\rl{y}_i)  + \feedt_i \err{u}_i^\trans \nabla \err{f}_i(y_i,\rl{y}_i).  \label{eq:ifp_storage_function_derivative3}
    \end{align}
    In the following, we derive an upper bound for the third and fourth term in~\eqref{eq:ifp_storage_function_derivative3}, since they are not in a form amenable to the quadratic supply rate provided in the statement of the proposition. Taking into account the strong convexity condition~\eqref{eq:convexity_differentiable}, it holds for the third term that
    \begin{align}
        - \err{y}_i^\trans \nabla \err{f}_i(y_i,\rl{y}_i) \leq - \convIdx_i \err{y}_i^\trans \err{y}_i. \label{eq:overapprox_convexity}
    \end{align}
    For an upper bound of the fourth term, first consider that the shifted dynamics $\dot{\err{x}}_i = -\alpha_i \nabla \err{f}_i(y_i,\rl{y}_i) + \alpha_i \err{u}_i$ and note that 
    \begin{align}
        \frac{1}{\alpha_i^2}\dot{\err{x}}_i^\trans \dot{\err{x}}_i &= \left( \err{u}_i - \nabla \err{f}_i(y_i,\rl{y}_i) \right)^\trans \left( \err{u}_i - \nabla \err{f}_i(y_i,\rl{y}_i) \right) \nonumber \\ 
        & = \err{u}_i^\trans \err{u}_i + \nabla \err{f}_i(y_i,\rl{y}_i)^\trans \nabla \err{f}_i(y_i,\rl{y}_i) \nonumber \\ & \quad - 2 \err{u}_i^\trans \nabla \err{f}_i(y_i,\rl{y}_i) \geq 0 \label{eq:ifp_storage_function_derivative4}
    \end{align}
    always holds. Furthermore, taking into account the $\lipIdx_i$-Lipschitz property of $\nabla f_i$, we obtain for \eqref{eq:ifp_storage_function_derivative4} 
    \begin{align}
        \err{u}_i^\trans \nabla \err{f}_i(y_i,\rl{y}_i) & \leq \frac{1}{2}\err{u}_i^\trans \err{u}_i + \frac{1}{2} \nabla \err{f}_i(y_i,\rl{y}_i)^\trans \nabla \err{f}_i(y_i,\rl{y}_i) \nonumber \\
        & \leq \frac{1}{2}\err{u}_i^\trans \err{u}_i + \frac{1}{2} \lipIdx_i \err{y}_i^\trans \err{y}_i. \label{eq:overapprox_lipschitz}
    \end{align}
    Substituting~\eqref{eq:overapprox_convexity}~and~\eqref{eq:overapprox_lipschitz} in the third and fourth term, respectively, in the derivative of the storage function~\eqref{eq:ifp_storage_function_derivative3}, we obtain
    \begin{align}
        \dot{S}_i & \leq \err{y}_i^\trans \err{u}_i - \frac{1}{2} \feedt_i \err{u}_i^\trans \err{u}_i - \left( \convIdx_i - \frac{1}{2} \feedt_i \lipIdx_i \right) \err{y}_i^\trans \err{y}_i, \label{eq:ifp_supply_rate}
    \end{align}
    which concludes the proof.
\end{proof}

In the last proposition, we have shown that the agent dynamics~\eqref{eq:agents_durchgriff} are \isoseip$(\ifpIdx_i,\ofpIdx_i)$ with indices $\ifpIdx_i = \frac{1}{2} \feedt_i$ and $\ofpIdx_i = \convIdx_i - \frac{1}{2} \feedt_i \lipIdx_i$. Note that by choosing $\feedt_i = 0$, we recover exactly the result of Proposition~\ref{prop:agents_oseip}. As known from the literature, the \gls{oseip} and \gls{iseip} indices cannot be chosen independently of each other. An increase of the \gls{oseip} index implies a decrease of the \gls{iseip} index and vice versa~\cite{simpson2018equilibrium}, which also applies to~\eqref{eq:ifp_supply_rate}. However, even if typically the Lipschitz index is greater than the strong convexity parameter, i.e., $\lipIdx_i \geq \convIdx_i$, by choosing $\feedt_i$ small enough, we always recover \isoseip$(\ifpIdx_i,\ofpIdx_i)$ for system~\eqref{eq:agents_durchgriff} with an excess of passivity, i.e., $\ifpIdx_i, \ofpIdx_i>0$. The largest bound for $\feedt_i$ can be computed by imposing that $\ofpIdx_i>0$ and solving for $\feedt_i$, i.e., $\feedt_i < \frac{2\convIdx_i}{\lipIdx_i}\leq 2$. Thus, the upper bounds on the indices are $\ofpIdx_i = \convIdx_i$ and $\ifpIdx_i = 1$. 

For the case when the agents employ a quadratic objective function, we can state a counterpart of Proposition~\ref{prop:agents_oseip_iseip} without requiring strong convexity and without over approximations, yielding sharper bounds for the \gls{oseip} and \gls{iseip} indices. Strong convexity is necessary in Proposition~\ref{prop:agents_oseip_iseip}, since without $\convIdx_i > 0$, $\ofpIdx_i$ cannot be greater or equal to zero.

\begin{corollary}  \label{cor:quadratic}
    Assume an agent $i \in\setOfAgents$ has a convex quadratic objective function of the form $f_i(x_i) = \frac{1}{2}x_i^\trans Q_i x_i + q_i^\trans x_i + c_i$, with $Q_i\in\R^{n\times n}$ positive semidefinite, $q_i \in \R^n$ and $c_i\in\R$. Consider the storage function $S_i(\err{x}_i) = \frac{1}{2\alpha_i}\err{x}_i^\trans G_i \err{x}_i$ with $G_i = \left(I_n + \feedt_i Q_i \right)^{-1}$ positive definite for $\feedt_i\geq0$. System~\eqref{eq:agents_durchgriff} is \isoseip($\ifpIdx_i, \ofpIdx_i$)~with $\ofpIdx_i = \eigmin{G_i Q_i}$ and $\ifpIdx_i = \eigmin{\feedt_i G_i}$. 
\end{corollary}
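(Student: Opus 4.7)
The plan is to mimic the computation in the proof of Proposition~\ref{prop:agents_oseip_iseip}, but exploit two structural facts peculiar to the quadratic case so that no over-approximation is needed for the cross terms: (i) the gradient of $f_i$ is affine, so $\nabla f_i(y_i)-\nabla f_i(\rl{y}_i) = Q_i \err{y}_i$ exactly; and (ii) $G_i=(I_n+\feedt_i Q_i)^{-1}$ is a polynomial (in fact rational) function of $Q_i$, so $G_i$, $Q_i$, and $G_i Q_i$ are all symmetric and pairwise commute. The weighting by $G_i$ in the storage function is chosen so that the indefinite cross-terms collapse exactly to $\err{y}_i^\trans \err{u}_i$ via the identity $G_i(I_n+\feedt_i Q_i)=I_n$.

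First, I would write the shifted dynamics as $\dot{\err{x}}_i=-\alpha_i Q_i \err{y}_i + \alpha_i \err{u}_i$ and differentiate the storage function to get
\begin{align*}
\dot{S}_i = \tfrac{1}{\alpha_i}\err{x}_i^\trans G_i \dot{\err{x}}_i = -\err{x}_i^\trans G_i Q_i \err{y}_i + \err{x}_i^\trans G_i \err{u}_i .
\end{align*}
Next, substituting $\err{x}_i=\err{y}_i-\feedt_i \err{u}_i$ and expanding yields four terms:
\begin{align*}
\dot{S}_i = -\err{y}_i^\trans G_i Q_i \err{y}_i + \feedt_i \err{u}_i^\trans G_i Q_i \err{y}_i + \err{y}_i^\trans G_i \err{u}_i - \feedt_i \err{u}_i^\trans G_i \err{u}_i .
\end{align*}

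The key step is to combine the two middle cross-terms. Since $G_i Q_i$ is symmetric, $\feedt_i \err{u}_i^\trans G_i Q_i \err{y}_i = \feedt_i \err{y}_i^\trans G_i Q_i \err{u}_i$, and hence
\begin{align*}
\err{y}_i^\trans G_i \err{u}_i + \feedt_i \err{y}_i^\trans G_i Q_i \err{u}_i = \err{y}_i^\trans G_i(I_n+\feedt_i Q_i)\err{u}_i = \err{y}_i^\trans \err{u}_i,
\end{align*}
using $G_i = (I_n+\feedt_i Q_i)^{-1}$. Therefore
\begin{align*}
\dot{S}_i = \err{y}_i^\trans \err{u}_i - \err{y}_i^\trans (G_i Q_i) \err{y}_i - \err{u}_i^\trans (\feedt_i G_i)\err{u}_i .
\end{align*}
Since $Q_i\succeq 0$ implies $G_i\succ 0$ and $G_i Q_i \succeq 0$ (both symmetric as noted above), bounding each quadratic form by its minimum eigenvalue gives the announced supply rate with $\ofpIdx_i=\eigmin{G_iQ_i}$ and $\ifpIdx_i=\eigmin{\feedt_iG_i}$.

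I do not expect any real obstacle: once the weighted storage is written down, the identity $G_i(I_n+\feedt_i Q_i)=I_n$ does all the work and obviates the Lipschitz/strong-convexity over-approximations used in~\eqref{eq:overapprox_convexity}--\eqref{eq:overapprox_lipschitz}. The only points to check carefully are that $G_i$ is well-defined and positive definite for $\feedt_i\geq 0$ (immediate from $Q_i\succeq 0$) and that $G_i$ and $Q_i$ commute so the symmetry argument for the cross-term goes through.
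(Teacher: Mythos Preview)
Your proposal is correct and follows essentially the same approach as the paper: both compute $\dot S_i$ with the $G_i$-weighted storage, substitute $\err{x}_i=\err{y}_i-\feedt_i\err{u}_i$, and use the identity $G_i(I_n+\feedt_i Q_i)=I_n$ to collapse the cross terms to $\err{y}_i^\trans\err{u}_i$, arriving at exactly the same equality $\dot S_i=\err{y}_i^\trans\err{u}_i-\err{y}_i^\trans G_iQ_i\err{y}_i-\feedt_i\err{u}_i^\trans G_i\err{u}_i$. Your route is slightly more streamlined because you write the shifted dynamics directly as $-\alpha_i Q_i\err{y}_i+\alpha_i\err{u}_i$ and invoke the commutativity/symmetry of $G_iQ_i$ explicitly, whereas the paper keeps $\err{x}_i+\feedt_i\err{u}_i$ longer and does a more verbose expansion before the same cancellation.
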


\begin{proof}
    With~\eqref{eq:agents_durchgriff}, the time derivative of the storage function $S_i$ is given by
    \begin{align}
        \dot{S}_i & = \frac{1}{\alpha_i} \err{x}_i^\trans G_i \dot{x}_i \nonumber \\
        & = \frac{1}{\alpha_i} \err{x}_i^\trans G_i \left( -\alpha_i Q_i (x_i + \feedt_i u_i) - \alpha_i q_i + \alpha_i u_i \right) \nonumber \\
        & = \err{x}_i^\trans G_i \left( - Q_i (x_i + \feedt_i u_i) - q_i + u_i \right). \label{eq:storage_function_quadratic_derivative}
    \end{align}
    Adding $\err{x}_i^\trans G_i \left(Q_i (\rl{x}_i + \feedt_i \rl{u}_i) +q_i - \rl{u}_i\right)$, which equals to zero, to~\eqref{eq:storage_function_quadratic_derivative}, we obtain
    \begin{align}
        \dot{S}_i & = \err{x}_i^\trans G_i\left(- Q_i (\err{x}_i + \feedt_i \err{u}_i)  + \err{u}_i \right)  \nonumber \\
        & = \err{x}_i^\trans G_i (I_n - \feedt_i Q_i)\err{u}_i - \err{x}_i^\trans G_i Q_i \err{x}_i. \label{eq:storage_function_quadratic_derivative_2}
    \end{align}
    By substituting $\err{x}_i = \err{y}_i - \feedt_i \err{u}_i$ in~\eqref{eq:storage_function_quadratic_derivative_2} and expanding all the products, we have
    \begin{align}
        \dot{S}_i & = \err{y}_i^\trans (G_i - \feedt_i G_i Q_i)\err{u}_i - (\err{y}_i - \feedt_i \err{u}_i)^\trans G_i Q_i (\err{y}_i - \feedt_i \err{u}_i) \nonumber \nonumber \\ & \quad - \feedt_i \err{u}_i^\trans G_i (I_n - \feedt_i Q_i)\err{u}_i \nonumber \\ \nonumber 
        & = \err{y}_i^\trans (G_i - \feedt_i G_i Q_i)\err{u}_i + \feedt_i \err{y}_i^\trans Q_i G_i \err{u}_i + \feedt_i \err{y}_i^\trans G_i Q_i \err{u}_i \\ \nonumber  & \quad - \err{y}_i^\trans G_i Q_i \err{y}_i  - \feedt_i^2 \err{u}_i^\trans G_i Q_i \err{u}_i - \feedt_i \err{u}_i^\trans G_i (I_n - \feedt_i Q_i)\err{u}_i \\ \nonumber 
        & = \err{y}_i^\trans (I_n + \feedt_i Q_i)G_i \err{u}_i - \err{y}_i^\trans G_i Q_i \err{y}_i - \feedt_i \err{u}_i^\trans G_i \err{u}_i \\ & = \err{y}_i^\trans \err{u}_i - \err{y}_i^\trans G_i Q_i \err{y}_i - \feedt_i \err{u}_i^\trans G_i \err{u}_i \label{eq:storage_function_quadratic_derivative_3}.
    \end{align}
    Note that $G_i Q_i$ in \eqref{eq:storage_function_quadratic_derivative_3} is positive semidefinite, which follows from simple calculations applying the Cayley-Hamilton Theorem~\cite[Th.~2.4.3.2]{horn2012matrix}. 
    Thus, from~\eqref{eq:storage_function_quadratic_derivative_3} follows the supply rate stated in the corollary. 
\end{proof}

With the last corollary, we show that the agent dynamics~\eqref{eq:agents_durchgriff} have an \gls{iseip} index greater than zero even without having a strongly convex objective function. Inspecting the \gls{oseip} and \gls{iseip} indices as a function of $\feedt_i$, we have $\ifpIdx_i(\feedt_i) = \eigmin{\feedt_i \left( I_n + \feedt_i Q_i \right)^{-1}}$ and $\ofpIdx_i(\feedt_i) = \eigmin{\left( I_n + \feedt_i Q_i \right)^{-1} Q_i }$. Due to the positive definiteness of $\left( I_n + \feedt_i Q_i \right)^{-1}$, it directly follows that $\ifpIdx_i(\feedt_i) \geq 0$ and equality holds only when $\feedt_i = 0$. Furthermore, $\ofpIdx_i(\feedt_i) \geq 0$, and $\ofpIdx_i(\feedt_i) = 0$ for all $\feedt_i$ if $Q_i$ does not have full rank, which is the case for only convex objective functions. For a strongly convex objective function, $Q_i^{-1}$ exists and we have $\ofpIdx_i(\feedt_i) = \eigmin{\left( Q_i^{-1} + \feedt_i I_n \right)^{-1}}$. For $\feedt_i \rightarrow 0$, we obtain $\ifpIdx_i(\feedt_i)\rightarrow 0$ and $\ofpIdx_i(\feedt_i)\rightarrow \eigmin{Q_i}$. For $\feedt_i \rightarrow \infty$, we obtain $\ifpIdx_i(\feedt_i)\rightarrow \frac{1}{\eigmax{Q_i}}$ and $\ofpIdx_i(\feedt_i)\rightarrow 0$ from simple limit calculations. Thus, compared to Proposition~\ref{prop:agents_oseip_iseip}, in Corollary~\ref{cor:quadratic} we do not have the upper bound $\feedt_i \leq 2$, limiting the possible \gls{oseip} and \gls{iseip} index values. Furthermore, when the objective function is only convex, the \gls{oseip} index is zero instead of negative.  

\subsection{Agent Dynamics for Constrained Optimization}

For considering constraints in the distributed optimization framework, we are inspired by the approach in~\cite{hatanaka2018passivity}. It uses a gradient descent and projected dual ascent studied in detail in~\cite{cherukuri2016asymptotic}. The method implicitly constraints the Lagrange multipliers $\lambda_{il}$ to the positive real numbers. Consider the agent system dynamics
\begin{align} \label{eq:agents_constrained}
    \Sigma_i: \left\{
    \begin{array}{cl}
    \dot{x}_i & = - \alpha_i \nabla_{x_i} L_i(x_i, \lambda_i, \mu_i) + \alpha_i u_i \\
    \dot{\lambda}_{il} & = \left\{ \begin{array}{cl}    
        0 & \text{if } \lambda_{il} = 0 \text{ and } g_{il}(x_i)<0 \\
        g_{il}(x_i) & \text{otherwise}
    \end{array} \right. \\
    \dot{\mu}_{ij} &= h_{ij}(x_i) \\
    y_i &= x_i ,
    \end{array} \right.
\end{align}
with $\alpha_i>0$. Note that system~\eqref{eq:agents_constrained} has discontinuous dynamics and the solutions are understood in the Carathéodory sense~\cite{cortes2008discontinuous}. The time derivative of a storage function $S_i(x_i,\lambda_i,\mu_i)$ may thus not exist everywhere. Therefore, we use the nonpathological derivative of~\cite[Definition~4]{bacciotti2006nonpathological}, which for a function $V(x)$ is defined as $\npd{V} = \nabla_x V^\trans \dot{x}$ in the case $V$ is continuously differentiable. Before analyzing the \gls{eip} properties, we study the properties of a steady state of~\eqref{eq:agents_constrained} in order to show that it complies with the design requirements for global optimality~\eqref{eq:condition_equilibrium_agents_constrained}. 

\begin{proposition} \label{prop:agents_constrained_steady_state}
    Consider system~\eqref{eq:agents_constrained}. Any equilibrium point $\left( \rl{x}_i, \rl{\lambda}_i, \rl{\mu}_i \right)$ fulfills the design requirements~\eqref{eq:condition_equilibrium_agents_constrained}. Further, it holds that $\lambda(t) \geq 0$ $\forall t \geq 0$ if $\lambda(0) \geq 0$.  
\end{proposition}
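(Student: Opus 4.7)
The plan is to derive the design requirements \eqref{eq:condition_equilibrium_agents_constrained} directly from the steady-state conditions of \eqref{eq:agents_constrained} by performing a case analysis on the switching rule for $\dot{\lambda}_{il}$, and then to establish nonnegativity of $\lambda(t)$ via a positive-invariance argument on the set $\{\lambda_i \geq 0\}$.

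First I would set $\dot{x}_i = 0$ in \eqref{eq:agents_constrained} to obtain $\rl{u}_i = \nabla_{x_i} L_i(\rl{x}_i, \rl{\lambda}_i, \rl{\mu}_i)$. Since $y_i = x_i$, this is exactly \eqref{eq:condition_equilibrium_agents_constrained_1}. Next, $\dot{\mu}_{ij}=0$ immediately gives $h_{ij}(\rl{x}_i) = 0$, which is \eqref{eq:condition_equilibrium_agents_constrained_3}. The interesting part is $\dot{\lambda}_{il}=0$ for each $l \in \setOfIneq{i}$: by the switching rule, either (a) $\rl{\lambda}_{il} = 0$ and $g_{il}(\rl{x}_i) < 0$, or (b) $g_{il}(\rl{x}_i) = 0$. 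In case (a), conditions \eqref{eq:condition_equilibrium_agents_constrained_2}, \eqref{eq:condition_equilibrium_agents_constrained_4}, and \eqref{eq:condition_equilibrium_agents_constrained_5} follow trivially. For case (b), \eqref{eq:condition_equilibrium_agents_constrained_2} and \eqref{eq:condition_equilibrium_agents_constrained_5} are immediate, so it only remains to verify $\rl{\lambda}_{il} \geq 0$, which is exactly what the second claim of the proposition provides at equilibrium.

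For the invariance claim $\lambda(t) \geq 0$ whenever $\lambda(0) \geq 0$, I would analyze the scalar dynamics of each component $\lambda_{il}$ at the boundary $\lambda_{il} = 0$. If at some time $t_0$ we have $\lambda_{il}(t_0) = 0$, then the switching rule forces $\dot{\lambda}_{il}(t_0) = \max\{0, g_{il}(x_i(t_0))\} \geq 0$: in the subcase $g_{il}(x_i(t_0)) < 0$ the rule freezes $\lambda_{il}$ at zero, while otherwise $\dot{\lambda}_{il}(t_0) = g_{il}(x_i(t_0)) \geq 0$. In the Carathéodory sense, this means the vector field points into (or along) the nonnegative orthant at its boundary, so the set $\R_{\geq 0}$ is forward-invariant for each component; starting from $\lambda_{il}(0) \geq 0$, the trajectory therefore cannot enter $\lambda_{il} < 0$.

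I expect the main obstacle to be a careful treatment of the discontinuity at $\lambda_{il}=0$: since the right-hand side is only piecewise continuous, one has to argue invariance in a way that is compatible with Carathéodory solutions rather than classical ones. I would resolve this by invoking the explicit structure of the projection in the second branch of \eqref{eq:agents_constrained}, which by construction is exactly the classical projected-dynamical-systems mechanism (as used, e.g., in \cite{cherukuri2016asymptotic}) and whose invariance of the nonnegative orthant is well established; once this is granted, the equilibrium analysis above produces \eqref{eq:condition_equilibrium_agents_constrained} directly.
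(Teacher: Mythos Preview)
Your proposal is correct and follows essentially the same approach as the paper: both argue that \eqref{eq:condition_equilibrium_agents_constrained_1} and \eqref{eq:condition_equilibrium_agents_constrained_3} drop out immediately from $\dot{x}_i=0$ and $\dot{\mu}_{ij}=0$, then perform a case analysis on the switching rule for $\dot{\lambda}_{il}$, and establish forward invariance of $\{\lambda_{il}\geq 0\}$ by checking that the right-hand side is nonnegative at the boundary $\lambda_{il}=0$. The only cosmetic difference is ordering: the paper first argues invariance and then reads off \eqref{eq:condition_equilibrium_agents_constrained_4}, whereas you run the case split first and defer \eqref{eq:condition_equilibrium_agents_constrained_4} in case~(b) to the invariance claim; your additional remark on Carath\'eodory solutions and the projected-dynamics interpretation is consistent with what the paper develops later but is not needed for this proposition.
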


\begin{proof}
    Equations~\eqref{eq:condition_equilibrium_agents_constrained_1},~\eqref{eq:condition_equilibrium_agents_constrained_3} are trivially fulfilled in the equilibria of~\eqref{eq:agents_constrained}. Furthermore, the multiplier $\lambda_{il}$ is always greater or equal than zero, which can be seen by inspecting the derivative of $\lambda_{il}$ in~\eqref{eq:agents_constrained}. When the multiplier is zero, the derivative $\dot{\lambda}_{il}$ is either positive or zero, in the case $g_{il}(\rl{x}_i) < 0$. Thus, for any steady state, $\rl{\lambda}_{il} \geq 0$ must hold, which is~\eqref{eq:condition_equilibrium_agents_constrained_4}. Condition~\eqref{eq:condition_equilibrium_agents_constrained_2} also follows from the previous argumentation, since $g_{il}(x_i)>0$ implies $\lambda_{il}$ is not an equilibrium. Thus, in all equilibria, it holds either $\rl{\lambda}_{il} = 0$ and $g_{il}(\rl{x}_i) \leq 0$, or $\rl{\lambda}_{il} \geq 0$ and $g_{il}(\rl{x}_i) = 0$ per design of the algorithm~\eqref{eq:agents_constrained}, which corresponds to~\eqref{eq:condition_equilibrium_agents_constrained_5}. 
\end{proof}

After showing that all equilibria of the proposed agent dynamics for constrained optimization~\eqref{eq:agents_constrained} fulfill the global optimality design requirements~\eqref{eq:condition_equilibrium_agents_constrained}, we next investigate their \gls{eip} properties.   

\begin{proposition} \label{prop:agents_constrained}
    Consider the storage function $S_i(\err{x}_i,\err{\lambda}_i,\err{\mu}_i) = \frac{1}{2\alpha_i} \err{x}_i^\trans \err{x}_i + \frac{1}{2} \err{\lambda}_i^\trans \err{\lambda}_i + \frac{1}{2} \err{\mu}_i^\trans \err{\mu}_i$, with $\lambda_i = \col{\lambda_{il}}$ and $\mu_i = \col{\mu_{ij}}$. System~\eqref{eq:agents_constrained} is \gls{eip} if the objective function $f_i$ is convex, and \gls{oseip}($\convIdx_i$) if $f_i$ is $\convIdx_i$-strongly convex, provided that the constraints $g_{il}$ and $h_{ij}$ are convex and affine, respectively. 
\end{proposition}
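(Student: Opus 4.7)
The plan is to follow the template of Proposition~\ref{prop:agents_oseip}: differentiate the storage function along trajectories, add zero in the form of the equilibrium condition to introduce the error variables $\err{u}_i$, $\err{y}_i$, and then use convexity to bound the resulting quadratic form. The new complications are (i) the Lagrangian's cross terms in $\lambda_i$, $\mu_i$ (which must vanish or be non-positive), and (ii) the discontinuity in $\dot{\lambda}_{il}$, which forces us to work with the nonpathological derivative $\npd{S}_i = \nabla_{(x_i,\lambda_i,\mu_i)} S_i^\trans \,\dot{(x_i,\lambda_i,\mu_i)}$ from~\cite{bacciotti2006nonpathological} and to split into cases on the projection.

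First I would compute
\begin{align*}
    \npd{S}_i &= \tfrac{1}{\alpha_i} \err{x}_i^\trans \dot{x}_i + \err{\lambda}_i^\trans \dot{\lambda}_i + \err{\mu}_i^\trans \dot{\mu}_i,
\end{align*}
substitute the dynamics~\eqref{eq:agents_constrained}, and add the equilibrium identity $\err{x}_i^\trans(\nabla_{x_i} L_i(\rl{x}_i,\rl{\lambda}_i,\rl{\mu}_i) - \rl{u}_i)=0$, which is zero by Proposition~\ref{prop:agents_constrained_steady_state}. This gives an $\err{y}_i^\trans \err{u}_i$ term (using $y_i=x_i$), a term $-\err{y}_i^\trans(\nabla f_i(y_i) - \nabla f_i(\rl{y}_i))$, and cross terms in the multipliers which I handle next.

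For the equality-constraint terms, using $h_{ij}(x_i)-h_{ij}(\rl{x}_i)=a_{ij}^\trans \err{x}_i$ (affinity) together with $h_{ij}(\rl{x}_i)=0$, a direct computation shows that the contributions from $\err{\mu}_i^\trans \dot{\mu}_i$ exactly cancel the $\mu$-gradient cross terms. For the inequality-constraint terms I would split on the projection. In the non-clipped case $\dot{\lambda}_{il}=g_{il}(x_i)$, I combine $\err{\lambda}_{il} g_{il}(x_i)$ with $-\lambda_{il}\err{x}_i^\trans \nabla g_{il}(x_i) + \rl{\lambda}_{il}\err{x}_i^\trans \nabla g_{il}(\rl{x}_i)$. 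Applying the two convexity inequalities~\eqref{eq:convexity_inequalities} to $g_{il}$ (legitimate since both $\lambda_{il}\ge 0$ by Proposition~\ref{prop:agents_constrained_steady_state} and $\rl{\lambda}_{il}\ge 0$ by KKT), these cross terms telescope to $\rl{\lambda}_{il} g_{il}(\rl{x}_i)+\lambda_{il}g_{il}(\rl{x}_i)\le 0$, using complementary slackness for the first summand and $g_{il}(\rl{x}_i)\le 0$ with $\lambda_{il}\ge 0$ for the second. In the clipped case ($\lambda_{il}=0$, $g_{il}(x_i)<0$), $\err{\lambda}_{il}\dot{\lambda}_{il}=0$ and $\lambda_{il}\nabla g_{il}(x_i)=0$, so only $\rl{\lambda}_{il}\err{x}_i^\trans \nabla g_{il}(\rl{x}_i)$ remains, and the second inequality in~\eqref{eq:convexity_inequalities} with complementary slackness yields $\rl{\lambda}_{il}g_{il}(x_i)\le 0$.

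Collecting everything, I obtain
\begin{align*}
    \npd{S}_i \;\le\; \err{y}_i^\trans \err{u}_i - \err{y}_i^\trans(\nabla f_i(y_i)-\nabla f_i(\rl{y}_i)).
\end{align*}
Now applying~\eqref{eq:convexity_differentiable}: if $f_i$ is convex (with $\convIdx_i=0$) the rightmost term is non-positive, giving EIP; if $f_i$ is $\convIdx_i$-strongly convex, it is bounded above by $-\convIdx_i \err{y}_i^\trans \err{y}_i$, giving OFEIP($\convIdx_i$). The main obstacle is the bookkeeping around the projection cases and verifying that the assumptions $\lambda_{il}\ge 0$ and $\rl{\lambda}_{il} g_{il}(\rl{x}_i)=0$ (needed to invoke the convexity inequalities with the correct sign and to cancel the residual) are in fact ensured by Proposition~\ref{prop:agents_constrained_steady_state} and the KKT conditions, respectively; once those are in hand the remainder is algebra analogous to Proposition~\ref{prop:agents_oseip}.
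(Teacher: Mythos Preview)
Your proposal is correct and follows essentially the same route as the paper: compute $\npd{S}_i$, add the equilibrium identity to produce $\err{y}_i^\trans \err{u}_i$, show the $\mu$-terms cancel exactly by affinity of $h_{ij}$, and show the $\lambda$-terms are nonpositive using the convexity inequalities~\eqref{eq:convexity_inequalities} together with $\lambda_{il},\rl{\lambda}_{il}\ge 0$, $g_{il}(\rl{x}_i)\le 0$, and complementary slackness, leaving $\npd{S}_i \le \err{y}_i^\trans \err{u}_i - \err{y}_i^\trans(\nabla f_i(y_i)-\nabla f_i(\rl{y}_i))$. The only organizational difference is that the paper first establishes the mode-independent bound $\err{\lambda}_{il}\dot{\lambda}_{il}\le \err{\lambda}_{il}g_{il}(x_i)$ and then merges with the gradient term, whereas you merge first and then split on modes; the ingredients and final inequality are identical (note a harmless sign slip in your telescoped residual, since $\rl{\lambda}_{il}g_{il}(\rl{x}_i)=0$ anyway).
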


\begin{proof}
    The nonpathological derivative of $S_i$ along~\eqref{eq:agents_constrained} is
    \begin{align}
        \npd{S}_i = \frac{1}{\alpha_i}\err{x}_i^\trans \dot{x}_i + \err{\lambda}_i^\trans \dot{\lambda}_i + \err{\mu}_i^\trans \dot{\mu}_i.
    \end{align}
    Inserting $\dot{x}_i$ from~\eqref{eq:agents_constrained}, we obtain
    \begin{align} \label{eq:nonpat_derivative_S_1}
        \npd{S}_i = \frac{1}{\alpha_i}\err{x}_i^\trans \big( - \alpha_i \nabla_{x_i} L_i(x_i,\lambda_i) + \alpha_i u_i \big) + \err{\lambda}_i^\trans \dot{\lambda}_i + \err{\mu}_i^\trans \dot{\mu}_i.
    \end{align}
    Adding $\nabla_{{x}_i} L_i(\rl{x}_i,\rl{\lambda}_i, \rl{\mu}_i) - \rl{u}_i$, which equals to zero, to~\eqref{eq:nonpat_derivative_S_1}, we get 
    \begin{align}
        \npd{S}_i = \err{x}_i^\trans \big( - \nabla_{x_i} \err{L}_i(x_i,\lambda_i, \mu_i) + \err{u}_i \big) + \err{\lambda}_i^\trans \dot{\lambda}_i + \err{\mu}_i^\trans \dot{\mu}_i.
    \end{align}
    Substituting the Lagrange function and expanding $\err{x}_i$ yields 
    \begin{small}
        \begin{align} \label{eq:nonpat_derivative_S}
            \npd{S}_i &= \err{x}_i^\trans \err{u}_i - \underbrace{\err{x}_i^\trans \nabla \err{f}_i(x_i,\rl{x}_i)}_{(\mathrm{A})} - \underbrace{\err{x}_i^\trans \left( \nabla g_i(x_i)^\trans \lambda_i - \nabla g_i(\rl{x}_i)^\trans \rl{\lambda}_i\right) }_{(\mathrm{B})} \nonumber \\ & 
            \quad - \underbrace{\err{x}_i^\trans \left( \nabla h_i(x_i)^\trans \mu_i - \nabla h_i(\rl{x}_i)^\trans \rl{\mu}_i \right)}_{(\mathrm{C})} + \underbrace{\err{\lambda}_i^\trans \dot{\lambda}_i}_{(\mathrm{D})} + \underbrace{\err{\mu}_i^\trans \dot{\mu}_i}_{(\mathrm{E})}. 
        \end{align}
    \end{small}
    \hspace{-0.14cm}where ${g}_i: \R^n \rightarrow \R^{\numOfIneq{i}}$ and ${h}_i: \R^n \rightarrow \R^{\numOfEq{i}}$ are the stacked functions $g_{il}$ and $h_{ij}$, i.e. ${g}_i = \col{g_{il}}$. 
    Subsequently, we show that $-\mathrm{B} + \mathrm{D }\leq 0$ and $-\,\mathrm{C} + \mathrm{E} = 0$. Hence, $\npd{S} \leq \err{x}_i^\trans \err{u}_i  - \mathrm{A}$ follows from~\eqref{eq:nonpat_derivative_S}, which is identical to~\eqref{eq:storage_function_derivative2} in Proposition~\ref{prop:agents_oseip} and thus the \gls{eip} and \gls{oseip}$(\convIdx_i)$ properties follow with the same argumentation. 
    
    We start by showing that $-\mathrm{C} + \mathrm{E} = 0$. Inserting $\dot{\mu}_i$ from~\eqref{eq:agents_constrained} into term $\mathrm{E}$ and taking into account that in any equilibrium $\rl{x}_i$, $h_i(\rl{x}_i) = 0$ holds, we have
    \begin{align}
        \err{\mu}_i^\trans h_i(x_i) &= \err{\mu}_i^\trans h_i(x_i) - \err{\mu}_i^\trans h_i(\rl{x}_i) =  \err{\mu}_i^\trans \err{h}_i(x_i,\rl{x}_i) \nonumber \\ & = \err{\mu}_i^\trans A_i \err{x}_i. \label{eq:term_E} 
    \end{align}
    Taking into account that $\nabla h_i(x_i) = A_i^\trans$, $A_i = \col{a_{ij}^\trans}$, the term $\mathrm{C}$ is identical to~\eqref{eq:term_E}, thus proving 
    \begin{align} \label{eq:terms_C_E}
        -\,\mathrm{C} + \mathrm{E} = 0.
    \end{align}
    W.r.t.\ term $\mathrm{D}$, we analyze it in both modes, i.e., when $\lambda_i = 0$ (mode 1) and $g_i(x_i) < 0$ or when $\lambda_i > 0$ (mode 2), where all operations apply component-wise. In mode 1, we have $\lambda_i = 0$ and the term $\mathrm{D}$ becomes 
    \begin{align}
        \err{\lambda}_i^\trans \dot{\lambda}_i = 0 = \lambda_i^\trans g_i(x_i).
    \end{align}
    Adding $-\rl{\lambda}_i^\trans g_i(x_i) + \rl{\lambda}_i^\trans g_i(x_i)$, which equals to zero, we obtain
    \begin{align}
        \err{\lambda}_i^\trans \dot{\lambda}_i = \err{\lambda}_i^\trans g_i(x_i) + \rl{\lambda}_i^\trans g_i(x_i) \leq \err{\lambda}_i^\trans g_i(x_i),
    \end{align}
    since $\rl{\lambda}_i \geq 0$ and $g_i(x_i) < 0$, and thus $\rl{\lambda}_i^\trans g_i(x_i) \leq 0$. In mode 2, we have 
    \begin{align}
        \err{\lambda}_i^\trans \dot{\lambda}_i = \err{\lambda}_i^\trans g_i(x_i),
    \end{align}
    and we thus conclude that 
    \begin{align} \label{eq:abschaetzung_D_1}
        \err{\lambda}_i^\trans \dot{\lambda}_i \leq \err{\lambda}_i^\trans g_i(x_i)   
    \end{align}
    always holds, regardless of the actual mode. Furthermore, adding $-\err{\lambda}_i^\trans g_i(\rl{x}_i) + \err{\lambda}_i^\trans g_i(\rl{x}_i)$ to~\eqref{eq:abschaetzung_D_1}, we have 
    \begin{align} \label{eq:abschaetzung_D_3}
        \err{\lambda}_i^\trans \dot{\lambda}_i \leq \err{\lambda}_i^\trans \err{g}_i(x_i,\rl{x}_i) + \err{\lambda}_i^\trans g_i(\rl{x}_i).
    \end{align}
    Note that $\err{\lambda}_i g_i(\rl{x}_i) = \left(\lambda_i - \rl{\lambda}_i\right) g_i(\rl{x}_i) \leq 0$, since $-\rl{\lambda}_i g_i(\rl{x}_i) = 0$ and $ \lambda_i g_i(\rl{x}_i) \leq 0$ (c.f. Proposition~\ref{prop:agents_constrained_steady_state}), and we thus obtain for~\eqref{eq:abschaetzung_D_3}
    \begin{align} \label{eq:abschaetzung_D_2}
        \err{\lambda}_i^\trans \dot{\lambda}_i \leq \left(\lambda_i - \rl{\lambda}_i \right)^\trans \err{g}_i(x_i,\rl{x}_i).
    \end{align} 
    Taking into account the fact that the functions $g_i$ are convex and using~\eqref{eq:convexity_inequalities},~\eqref{eq:abschaetzung_D_2} can be overestimated by
    \begin{align}
        \err{\lambda}_i^\trans \dot{\lambda}_i \leq \lambda_i^\trans \nabla g_i(x_i)^\trans \err{x}_i - \rl{\lambda}_i^\trans  \nabla g_i(\rl{x}_i)^\trans \err{x}_i, \nonumber
    \end{align} 
    and factorizing $\err{x}_i$ we obtain
    \begin{align}
        \err{\lambda}_i^\trans \dot{\lambda}_i \leq \err{x}_i^\trans \left( \nabla g_i(x_i) \lambda_i - \nabla g_i(\rl{x}_i) \rl{\lambda}_i \right). \nonumber
    \end{align}
    We define the right-hand side as term $\err{\mathrm{D}}$, which is equal to term $-\mathrm{B}$. Note that $\mathrm{D} \leq \err{\mathrm{D}}$ and thus
    \begin{align} \label{eq:terms_B_D}
        -\,\mathrm{B} + \mathrm{D} \leq -\,\mathrm{B} + \err{\mathrm{D}} = 0.
    \end{align}
    With~\eqref{eq:terms_C_E}~and~\eqref{eq:terms_B_D}, we have shown for~\eqref{eq:nonpat_derivative_S} that
    \begin{align} \nonumber
        \npd{S}_i \leq \err{y}_i^\trans \err{u}_i - \err{y}_i^\trans \nabla \err{f}_i(y_i,\rl{y}_i),
    \end{align}
    holds, which completes the proof. 
\end{proof}

\subsection{Controller Dynamics} 

For the controller systems, consider the integrator with feedthrough
\begin{align} \label{eq:controllers_concrete}
    \Pi_k: 
    \begin{array}{cl}
        \dot{z}_k & = \beta_k \zeta_k \\
        d_k & = z_k + \beta_k \zeta_k.
    \end{array}
\end{align}
Note that the controller dynamics~\eqref{eq:controllers_concrete} with $\beta_k>0$ are a system with integral action as in Definition~\ref{def:integral_action}, and thus $\rl{\zeta}_k = 0$, as required for an optimal steady state by Theorem~\ref{th:requirements_equilibria}~and~\ref{th:requirements_equilibria_constrained}. In addition,~\eqref{eq:controllers_concrete} is \gls{eio}. In the next proposition, the \gls{eip} properties of~\eqref{eq:controllers_concrete} are established. 

\begin{proposition} \label{prop:controllers_iseip}
    Consider the storage function $W_k(\err{z}_k) = \frac{1}{2 \beta_k} \err{z}_k^\trans \err{z}_k$. System~\eqref{eq:controllers_concrete} is \gls{iseip}($\beta_k$) for any $\beta_k> 0$. 
\end{proposition}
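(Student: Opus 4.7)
The plan is to verify directly that the time derivative of the proposed storage function $W_k$ along the controller dynamics~\eqref{eq:controllers_concrete} satisfies the IFEIP($\beta_k$) supply rate $w(\tilde{\zeta}_k,\tilde{d}_k) = \tilde{\zeta}_k^\trans \tilde{d}_k - \beta_k \tilde{\zeta}_k^\trans \tilde{\zeta}_k$. Since the computation is linear and short, no clever trick is needed; the essential observation is only that the feedthrough term $\beta_k \zeta_k$ in the output $d_k$ is exactly what produces the input-feedforward passivity excess.

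First, I would characterize the equilibrium set. Because $\beta_k>0$, setting $\dot{z}_k=0$ forces $\bar{\zeta}_k=0$, and then $\bar{d}_k = \bar{z}_k$. Hence the error variables become $\tilde{\zeta}_k=\zeta_k$ and
\begin{equation*}
\tilde{d}_k = d_k - \bar{d}_k = (z_k+\beta_k\zeta_k) - \bar{z}_k = \tilde{z}_k + \beta_k \tilde{\zeta}_k.
\end{equation*}
This identity will be the bridge between the storage function and the target supply rate.

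Next, I would compute
\begin{equation*}
\dot{W}_k = \frac{1}{\beta_k}\tilde{z}_k^\trans \dot{z}_k = \frac{1}{\beta_k}\tilde{z}_k^\trans (\beta_k \zeta_k) = \tilde{z}_k^\trans \tilde{\zeta}_k.
\end{equation*}
Substituting $\tilde{z}_k = \tilde{d}_k - \beta_k \tilde{\zeta}_k$ into the right-hand side gives
\begin{equation*}
\dot{W}_k = (\tilde{d}_k - \beta_k \tilde{\zeta}_k)^\trans \tilde{\zeta}_k = \tilde{\zeta}_k^\trans \tilde{d}_k - \beta_k \tilde{\zeta}_k^\trans \tilde{\zeta}_k,
\end{equation*}
which is exactly the IFEIP($\beta_k$) supply rate, actually attained with equality. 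Together with $W_k \geq 0$ and $W_k(\bar{z}_k,\bar{z}_k)=0$, this establishes the claim.

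There is no real obstacle here: the argument is essentially a one-line calculation, and the only care required is to keep track of the shifted output variable $\tilde{d}_k$ induced by the feedthrough, which, in contrast to a pure integrator, produces a strict input-passivity excess rather than plain passivity.
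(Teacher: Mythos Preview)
Your proof is correct and follows essentially the same route as the paper: compute $\dot{W}_k=\tilde{z}_k^\trans\tilde{\zeta}_k$ and then substitute $\tilde{z}_k=\tilde{d}_k-\beta_k\tilde{\zeta}_k$ from the output equation. The only cosmetic difference is that you first invoke $\bar{\zeta}_k=0$ to identify $\tilde{\zeta}_k=\zeta_k$, whereas the paper subtracts the (zero) term $\tilde{z}_k^\trans\bar{\zeta}_k$ explicitly; the computations are otherwise identical.
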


\begin{proof}
    Consider the time derivative of the storage function $W_k$ and insert the system dynamics of~\eqref{eq:controllers_concrete} to obtain 
    \begin{align}
        \dot{W}_k &= \frac{1}{\beta_k} \err{z}_k^\trans \dot{z}_k \\
        & = \frac{1}{\beta_k} \err{z}_k^\trans \beta_k \zeta_k. 
    \end{align}
    Next, subtracting the equilibrium $0 = \err{z}_k^\trans \rl{\zeta}_k$, we have 
    \begin{align}
        \dot{W}_k & = \err{z}_k^\trans \zeta_k - \err{z}_k^\trans \rl{\zeta}_k \nonumber \\
        & = \err{z}_k^\trans \err{\zeta}_k \nonumber \\
        & = \err{d}_k^\trans \err{\zeta}_k - \beta_k \err{\zeta}_k^\trans \err{\zeta}_k, \label{eq:storage_function_derivative_con}
    \end{align}
    where we used the output equation of the system dynamics~\eqref{eq:controllers_concrete} in the last equation. Note that~\eqref{eq:storage_function_derivative_con} verifies the \gls{iseip}($\beta_k$) property with $\beta_k > 0$.  
\end{proof}

\begin{remark} \label{rem:controllers_eip}
    If the feedthrough in the output equation of~\eqref{eq:controllers_concrete} is eliminated, \eqref{eq:controllers_concrete} is merely \gls{eip} instead of \gls{iseip}($\beta_k$), which directly follows from the proof of Proposition~\ref{prop:controllers_iseip}. 
\end{remark}

In this section, we have proposed different agent and controller dynamics and analyzed their \gls{eip} properties. The proposed agent dynamics exhibit \gls{oseip} and \gls{iseip} properties and are capable of handling private, convex constraints. Furthermore, we have shown that the design requirements of Theorems~\ref{th:requirements_equilibria}~and~\ref{th:requirements_equilibria_constrained} are fulfilled with the proposed systems, thus ensuring globally optimal equilibria. In the next sections, we use the \gls{eip} properties of the agent and controller dynamics to make convergence statements to these equilibria.


%
%


\section{Convergence Analysis with Undirected Communication Topologies} \label{sec:convergence_undirected}

In this section, we analyze the convergence of the closed-loop system in Figure~\ref{fig:burger_structure} for both unconstrained and constrained optimization with undirected communication topologies. We show that Theorem~\ref{th:requirements_convergence} can be applied with the dynamics proposed in Section~\ref{sec:agent_controller_dynamics}. Furthermore, we demonstrate that less restrictive convergence requirements can be obtained when the admissible agent and controller dynamics are narrowed to the ones proposed in Section~\ref{sec:agent_controller_dynamics}.

%
%

\subsection{Unconstrained Optimization}


In Section~\ref{sec:agent_controller_dynamics}, we have shown that the agent dynamics~\eqref{eq:agents_concrete}, \eqref{eq:agents_durchgriff}, and~\eqref{eq:agents_constrained} are \gls{oseip} with an excess of passivity if the objective function of the agent is strongly convex (see Propositions~\ref{prop:agents_oseip},~\ref{prop:agents_oseip_iseip} and~\ref{prop:agents_constrained}).\footnote{Note that the assumption of strongly convex functions is widely adopted in the literature (see, e.g.,~\cite{li2020input,li2020distributed,kia2015distributed}).} In this case, with any of the controller systems proposed in Section~\ref{sec:agent_controller_dynamics}, convergence to the global optimizer directly follows from Theorem~\ref{th:requirements_convergence}. In some applications, however, the agents may only have convex objective functions, thus having passive but no \gls{oseip} dynamics according to Propositions~\ref{prop:agents_oseip},~\ref{prop:agents_oseip_iseip} and~\ref{prop:agents_constrained}. 
In the following theorem, we show that for the unconstrained problem and an undirected communication topology, the proposed agent and controller dynamics from Section~\ref{sec:agent_controller_dynamics} achieve convergence, if only the sum of all objective functions is strictly convex. 
	

\begin{theorem} \label{th:stability}
    Let the agents $i \in \setOfAgents$ and controller systems $k \in \setOfControllers$ obey the dynamics~\eqref{eq:agents_concrete} and~\eqref{eq:controllers_concrete}, respectively, and let Propositions~\ref{prop:agents_oseip} and~\ref{prop:controllers_iseip} hold. Further, let the agents be interconnected by a symmetric communication topology~\eqref{eq:interconnection} fulfilling property~\eqref{eq:nullspace_property}. Then, all trajectories converge to an equilibrium point in the invariant manifold $\eqM$ as specified in Theorem~\ref{th:requirements_equilibria}, if the objective functions of the agents $f_i:\R^n \rightarrow \R$ are convex and the sum of all objective functions is strictly convex. 
\end{theorem}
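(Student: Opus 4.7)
My plan is to exploit the excess of input passivity of the controllers (Proposition~\ref{prop:controllers_iseip}) together with the sharper \gls{eip} supply rate of the agents with dissipation term $\Psi_i$ given in Corollary~\ref{cor:agents_eid}, so that the lack of \gls{oseip} on the agent side can be compensated globally by the strict convexity of the \emph{aggregated} cost. The key observation is that Theorem~\ref{th:requirements_convergence} cannot be invoked directly, because mere convexity of $f_i$ only yields \gls{eip} of~\eqref{eq:agents_concrete} (with $\varrho_i=0$). I will therefore reconstruct the Lyapunov argument from scratch, using the $\Psi_i$ terms in place of an \gls{oseip} excess.

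I would first fix an arbitrary equilibrium $(\rl{x},\rl{z})\in\eqM$, which exists by Theorem~\ref{th:requirements_equilibria}, and pick the composite storage function $V=\sum_{i\in\setOfAgents}S_i(\err{x}_i)+\sum_{k\in\setOfControllers}W_k(\err{z}_k)$ with the $S_i$, $W_k$ from Corollary~\ref{cor:agents_eid} and Proposition~\ref{prop:controllers_iseip}. Differentiating along the closed-loop trajectories and inserting the supply rates gives
\begin{align*}
\dot{V}\;\le\;\err{u}^\trans\err{y}+\err{\zeta}^\trans\err{d}-\sum_{i\in\setOfAgents}\Psi_i(\err{y}_i)-\sum_{k\in\setOfControllers}\beta_k\err{\zeta}_k^\trans\err{\zeta}_k.
\end{align*}
The cross terms cancel by the skew-symmetry of the interconnection~\eqref{eq:communication_bipartite_incidence}, namely $\err{u}^\trans\err{y}+\err{\zeta}^\trans\err{d}=-\err{d}^\trans(\inci\otimes I_n)^\trans\err{y}+\err{d}^\trans(\inci\otimes I_n)^\trans\err{y}=0$, so that $\dot{V}\le-\sum_i\Psi_i(\err{y}_i)-\sum_k\beta_k\|\err{\zeta}_k\|^2\le 0$. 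Radial unboundedness of $V$ on the shifted coordinates yields boundedness of all trajectories, hence LaSalle's invariance principle is applicable.

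The main obstacle is to show that the largest invariant subset of $\{\dot V=0\}$ still coincides with the target manifold $\eqM$, since $\Psi_i(\err{y}_i)=(\err{y}_i)^\trans\bigl(\nabla f_i(y_i)-\nabla f_i(\rl{y}_i)\bigr)$ may vanish off the equilibrium when $f_i$ is only convex. On any trajectory in $\{\dot V=0\}$, the controller dissipation forces $\err{\zeta}_k\equiv 0$; combined with the nullspace property~\eqref{eq:nullspace_property} this yields consensus $y_i(t)\equiv a(t)$ for some $a(t)\in\R^n$. Evaluating $\Psi_i=0$ with $\rl{y}_i=\opt{\yrm}$ gives $(a(t)-\opt{\yrm})^\trans(\nabla f_i(a(t))-\nabla f_i(\opt{\yrm}))=0$ for every $i$; summing over $\setOfAgents$ and using $\sum_i\nabla f_i(\opt{\yrm})=0$ (optimality condition~\eqref{eq:optimality_condition}) I obtain $(a(t)-\opt{\yrm})^\trans\sum_i\nabla f_i(a(t))=0$, which by the \emph{strict} convexity of $\sum_i f_i$ forces $a(t)=\opt{\yrm}$. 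Thus on the invariant set $y_i=\opt{\yrm}$, $\err{\zeta}_k=0$, and $\dot{z}_k=\beta_k\zeta_k=0$, so the set consists entirely of equilibria in $\eqM$.

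Finally, to upgrade set convergence from LaSalle to convergence to a single equilibrium point, I would use the standard accumulation-point trick available because of the equilibrium-independent construction of $V$: by boundedness, trajectories have an accumulation point $(\rl{x}^{\star},\rl{z}^{\star})\in\eqM$; rebuilding the Lyapunov function with this particular equilibrium as the reference, the monotone non-increasing function $V_{\rl{x}^\star,\rl{z}^\star}$ has a subsequence tending to zero, hence tends to zero along the whole trajectory, implying $(x(t),z(t))\to(\rl{x}^\star,\rl{z}^\star)$ and thus $y_i(t)\to\opt{\yrm}$ for every $i\in\setOfAgents$.
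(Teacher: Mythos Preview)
Your proposal is correct and follows essentially the same route as the paper: the composite storage $V=\sum_i S_i+\sum_k W_k$, the cancellation of the cross terms via the skew-symmetric interconnection, and the LaSalle argument showing that $\err{\zeta}_k\equiv 0$ forces consensus while $\sum_i\Psi_i(\err{y}_i)=0$ under consensus forces $y_i=\opt{\yrm}$ by strict convexity of the aggregate objective. Your final accumulation-point step to upgrade set convergence to convergence to a single equilibrium is in fact more careful than the paper, which only argues that the $\omega$-limit set consists of equilibria.
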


\begin{proof}
    Consider the network-wide Lyapunov function~\eqref{eq:lyapunov} and the time derivative
    \begin{align}
        \dot{V} & \leq \sum_{i \in \setOfAgents} \left( \err{u}_i^\trans \err{y}_i - \Psi_i(\err{y}_i) \right) + \sum_{k \in \setOfControllers} \left( \err{\zeta}_k^\trans \err{d}_k - \ifpIdx_k \zeta_k^\trans \zeta_k \right). \nonumber
    \end{align}
    Written in network variables, we have
    \begin{align}
        \dot{V} &\leq \err{u}^\trans \err{y} - \sum_{i \in \setOfAgents} \Psi_i(\err{y}_i) + \err{\zeta}^\trans \err{d}  - \err{\zeta}^\trans \left(\diag{\ifpIdx} \otimes I_n\right) \err{\zeta} \nonumber \\
        & = - \sum_{i \in \setOfAgents}\Psi_i(\err{y}_i)  - \err{y}^\trans K \err{y} , \label{eq:lyapunov_derivative4}
    \end{align}
    where the last equality follows by inserting the interconnection topology~\eqref{eq:interconnection} into the first, third and fourth term, and introducing $K = \left( \inci \otimes I_n \right) \left(\diag{\ifpIdx}\otimes I_n\right) \left( \inci \otimes I_n \right)^\trans$. The first term in~\eqref{eq:lyapunov_derivative4} is positive semidefinite, since every $\Psi_i(\err{y}_i)$ is nonnegative when $f_i:\R^n \rightarrow \R$ is convex, see Corollary~\ref{cor:agents_eid}. Furthermore, the first term in~\eqref{eq:lyapunov_derivative4} is positive definite when all $\err{y}_i$, $i \in \setOfAgents$ are in consensus. To show that, assume that the agents are in consensus, i.e., $\err{y}_i = \err{y}_{\text{c}}$, and since $\opt{\yrm}$ is unique due to strict convexity, it holds that
    \begin{align}
        \sum_{i\in\setOfAgents} \Psi_i(\err{y}_i) &=  \err{y}_{\text{c}}^\trans (\sum_{i\in\setOfAgents}\nabla f_i(y_{\text{c}}) - \sum_{i\in\setOfAgents}\nabla f_i(\rl{y}_{\text{c}})), \nonumber 
    \end{align}
    which is positive for all $y_{\text{c}}$ other than $\opt{\yrm}$, if the sum of all objective functions is strictly convex (see condition~\eqref{eq:convexity_differentiable}), as required in the theorem.  The second term $\err{y}^\trans K \err{y}$ in~\eqref{eq:lyapunov_derivative4} is nonnegative, and positive whenever $\err{y}_i$ are not in consensus. This is because the matrix $K$ is positive semidefinite, and $\err{y}^\trans K \err{y} = 0$ is equivalent to $\left( \inci \otimes I_n \right)^\trans \err{y} = 0$ \cite[Observation~7.1.6]{horn2012matrix}. With Proposition~\ref{prop:kernel_kronecker}, it holds that $\err{y}^\trans K \err{y} = 0$ if and only if $\err{y}_i$, $i \in \setOfAgents$ are in consensus. Thus,~\eqref{eq:lyapunov_derivative4} is negative semidefinite and zero only if the agents are at the optimizer and at consensus, i.e., $y_i = \opt{\yrm}$, $i \in \setOfAgents$, which also is the definition of the manifold $\eqM$ in Theorem~\ref{th:requirements_equilibria}. Thus, we conclude that the output $y_i$ of all agents converge to an equilibrium in $\eqM$. Furthermore, with \gls{eio} of the $\Sigma_i$ systems, the states $x_i$ are also constant. Since in the equilibrium manifold $\eqM$ the outputs of the agents $y_i$ are in consensus, it holds with~\eqref{eq:interconnection1} and Proposition~\ref{prop:kernel_kronecker} in the Appendix~\ref{sec:appendix_proofs} that $\zeta_k = 0$, and we have $\dot{z}_k = 0$, which means that all $\rl{z}_k$, and thus $\rl{d}_k$, are constant. 
\end{proof}



\subsection{Constrained Optimization} \label{sec:constrained_optimization}



In this subsection, we investigate the convergence behavior for constrained optimization using the discontinuous agent dynamics~\eqref{eq:agents_constrained} with undirected communication structures. In this case, Theorem~\ref{th:requirements_convergence} is only applicable under Assumption~\ref{assum:existence_solution}. Thus, we first give the closed-loop networked system (see Figure~\ref{fig:burger_structure}) and ensure that appropriate solutions exists, i.e., that Assumption~\ref{assum:existence_solution} holds. Afterwards, we follow similar steps as in Theorem~\ref{th:stability} for the unconstrained case and show via Lyapunov theory that the convergence of all solutions to the global optimizer $\opt{\yrm}$ requires only the sum of all objective functions to be strictly convex.


The closed-loop system is composed of agent systems~\eqref{eq:agents_constrained} and controller systems~\eqref{eq:controllers_concrete} interconnected by a generalized communication structure~\eqref{eq:communication_bipartite_incidence}, i.e.,
\begin{subequations}\label{eq:closed_loop_discontinuous}
    \begin{align}
        \dot{x} & = - \alpha \nabla_{x} L(x,\lambda,\mu) - \alpha (R\otimes I_n) d \\
        \dot{\lambda} & = \left\{ \begin{array}{cl}    
            0 & \text{if } \lambda = 0 \text{ and } g(x)<0 \label{eq:multiplier_discontnuity}\\
            g(x) & \text{otherwise}
        \end{array} \right. \\
        \dot{\mu} &= h(x) \\
        \dot{z} &= \beta (R\otimes I_n)^\trans y, 
    \end{align} 
\end{subequations}
with the outputs $y = x$ and $d = z + \beta \left( R \otimes I_n \right)^\trans y$. The variables $x$, $\lambda$, $\mu$, $z$, and functions $L:\R^{\numOfAgents n} \times \R^{ \numOfEq{}} \times \R^{ \numOfIneq{}} \rightarrow \R^{\numOfAgents n}$, $g:\R^{\numOfAgents n} \rightarrow \R^{ \numOfIneq{}}$, $h:\R^{\numOfAgents n} \rightarrow \R^{\numOfEq{}}$ are network variables and functions, respectively, i.e., $x = \col{x_i} \in \R^{\numOfAgents n}$ and $L = \col{L_i}$, $i\in\setOfAgents$. The variables $\alpha$ and $\beta$ are diagonal matrices, i.e., $\alpha = \diag{\alpha_i} \otimes I_n$. In~\eqref{eq:multiplier_discontnuity}, the operations apply component-wise. 
Note that the closed-loop system is a dynamical system with discontinuous right-hand side due to~\eqref{eq:multiplier_discontnuity}. For such systems, the existence of a solution is not ensured by classical theorems, since the vector field is not locally Lipschitz. 
Instead, the existence of solutions has to be studied for each specific system before the properties of their solutions are analyzed~\cite{cortes2008discontinuous}.
In the following, we proof the existence, uniqueness and continuity w.r.t.\ initial states of the solution of \eqref{eq:closed_loop_discontinuous}. For that, we follow similar arguments as in~\cite{cherukuri2016asymptotic} and make use of the theory of projected dynamical systems, which are a special type of dynamical systems with discontinuous right-hand side. For projected dynamical systems, there exist useful results about the existence of solutions, see~Appendix~\ref{sec:appendix_projected}. The following proposition shows that the closed-loop system~\eqref{eq:closed_loop_discontinuous} can be cast as such a projected dynamical system. 
\begin{proposition} \label{prop:closed_loop_as_projected_sys}
    The discontinuous system~\eqref{eq:closed_loop_discontinuous} can be written as a projected dynamical system. 
\end{proposition}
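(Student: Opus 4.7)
The plan is to recast system~\eqref{eq:closed_loop_discontinuous} into the canonical form of a projected dynamical system
\begin{align*}
    \dot{\xi} = \vproj{F(\xi)}{\xi}{K},
\end{align*}
where $K$ is a closed convex set, $F$ is a (locally Lipschitz) vector field, and $\vproj{v}{\xi}{K}$ denotes the Euclidean projection of the vector $v$ onto the tangent cone $T_K(\xi)$ at $\xi$. The only source of discontinuity in~\eqref{eq:closed_loop_discontinuous} is the multiplier law, which enforces $\lambda \geq 0$; all other components evolve according to continuous right-hand sides.

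To carry this out, I would first aggregate the state as $\xi = \col{x, \lambda, \mu, z}$ and choose the product constraint set
\begin{align*}
    K = \R^{\numOfAgents n} \times \R^{\numOfIneq{}}_{\geq 0} \times \R^{\numOfEq{}} \times \R^{\numOfControllers n}.
\end{align*}
Next, define the continuous vector field $F(\xi)$ that matches~\eqref{eq:closed_loop_discontinuous} on the $x$, $\mu$, $z$ blocks and takes the ``otherwise'' branch $\dot{\lambda} = g(x)$ on the $\lambda$ block (so $F$ is free of the case distinction). Provided the primitive gradients $\nabla f_i$, $\nabla g_{il}$ and the affine maps $h_{ij}$ are smooth enough, $F$ is at least continuous and in typical cases locally Lipschitz, so the projected system is well posed in the sense of Appendix~\ref{sec:appendix_projected}.

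It then remains to verify that $\vproj{F(\xi)}{\xi}{K}$ reproduces the right-hand side of~\eqref{eq:closed_loop_discontinuous}. Because $K$ is a Cartesian product, the tangent cone factorises component-wise. In the $x$, $\mu$, $z$ directions, the tangent cone equals the full ambient space, so the projection acts as the identity and those blocks of $F$ pass through unchanged. In the $\lambda$ block, the tangent cone at $\lambda$ is the product of scalar cones $T_{il}$, with $T_{il} = \R$ whenever $\lambda_{il} > 0$ and $T_{il} = \R_{\geq 0}$ whenever $\lambda_{il} = 0$. Hence the scalar projection of the $il$-component $g_{il}(x_i)$ of $F$ returns $g_{il}(x_i)$, except when $\lambda_{il} = 0$ and $g_{il}(x_i) < 0$, in which case it returns $0$. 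This is precisely the case distinction in~\eqref{eq:multiplier_discontnuity}, so the two right-hand sides coincide.

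The only genuine subtlety I anticipate is justifying the component-wise decomposition of the tangent cone, but since $K$ is a polyhedral product set with all nontrivial facets aligned with the coordinate axes of $\lambda$, this decomposition is standard and the scalar projection formula used above is elementary. Once this is in place, the equivalence between~\eqref{eq:closed_loop_discontinuous} and the projected form is immediate, which is all that is claimed in the proposition.
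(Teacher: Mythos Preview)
Your proposal is correct and follows essentially the same route as the paper: the same product set $K=\R^{\numOfAgents n}\times\R^{\numOfIneq{}}_{\geq 0}\times\R^{\numOfEq{}}\times\R^{\numOfControllers n}$, the same continuous vector field obtained by always taking the ``otherwise'' branch for $\dot\lambda$, and the same component-wise check that the projection reproduces the switching in~\eqref{eq:multiplier_discontnuity}. The only cosmetic difference is that you phrase the projection via the tangent-cone decomposition, whereas the paper invokes the inward-normal characterization of Proposition~\ref{prop:geometrical_interpretation_vector_projection} and Remark~\ref{rem:geometrical_interpretation}; these are equivalent descriptions of the same operator on a polyhedral product set.
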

\begin{proof}
    The proof consists of finding a continuous vector field and a closed convex set $\mathcal{K}$ such that the projected dynamical system defined thereby is equal to system~\eqref{eq:closed_loop_discontinuous}. Consider the vector field 
    \begin{align} \label{eq:continuous_vector_field}
        \phi(x,\lambda,\mu,z) = \mat{- \alpha \nabla_{x} L - \alpha (R\otimes I_n) z \\ g(x) \\ h(x) \\ (R\otimes I_n)^\trans x},
    \end{align}
    which is a version of~\eqref{eq:closed_loop_discontinuous} without the discontinuity at $\lambda = 0$, and the closed and convex set $\mathcal{K} = \R^{\numOfAgents n} \times \R^{\numOfIneq{}}_{\geq 0} \times \R^{\numOfEq{}} \times \R^{\numOfControllers n}$. Observe that the set $\mathcal{K}$ excludes negative Lagrange multipliers $\lambda$, understood component-wise. Next, consider the projected dynamical system 
    \begin{align}\label{eq:projected_system_closed_loop}
        \mat{\dot{x}^{\diamond} \\ \dot{\lambda}^\diamond \\ \dot{\mu}^\diamond \\ \dot{z}^\diamond} = \vproj{\phi((x^\diamond,\lambda^\diamond,\mu^\diamond,z^\diamond))}{(x^\diamond,\lambda^\diamond,\mu^\diamond,z^\diamond)}{\mathcal{K}}. 
    \end{align}  
    Note that as per Proposition~\ref{prop:geometrical_interpretation_vector_projection}, Case (i), in the interior of $\mathcal{K}$, i.e., when $\lambda^\diamond > 0$, the projection is not active and~\eqref{eq:projected_system_closed_loop} is identical to~\eqref{eq:closed_loop_discontinuous}, i.e., $\dot{\lambda}^\diamond = g(x^\diamond)$. Thus, we solely need to investigate~\eqref{eq:projected_system_closed_loop} at the boundary of $\mathcal{K}$ and compare it with~\eqref{eq:closed_loop_discontinuous}. Note that the boundary of $\mathcal{K}$ is also where the discontinuity in~\eqref{eq:closed_loop_discontinuous} is.
    On the boundary of $\mathcal{K}$, some Lagrange multipliers are zero, i.e., $\lambda_q^\diamond = 0$, with $q\in \mathcal{I}$, where $\mathcal{I} \subseteq \setOfIneq{}$ is the set of multipliers that are zero. For these multipliers on the boundary of $\mathcal{K}$, we have either $g_q(x^\diamond) \geq 0$ or $g_q(x^\diamond)<0$. In case of $g_q(x^\diamond) \geq 0$, note that the vector field points inwards into $\mathcal{K}$ and thus the projection is not active. In~\eqref{eq:closed_loop_discontinuous}, the switch is not active, and we thus have $\dot{\lambda}_q^\diamond = g_q(x^\diamond)$ in~\eqref{eq:projected_system_closed_loop} and $\dot{\lambda}_q = g_q(x)$ in~\eqref{eq:closed_loop_discontinuous}, which is equal in both systems. When $g_q(x^\diamond)<0$, note that the vector field points outwards perpendicular to the boundary of $\mathcal{K}$, and due to Remark~\ref{rem:geometrical_interpretation}, in such a case the vector projection onto $\mathcal{K}$ is zero, i.e., $\dot{\lambda}^\diamond_q = 0$. For~\eqref{eq:closed_loop_discontinuous}, in case that $g_q(x)<0$, we have $\dot{\lambda}_q = 0$, which is again identical to the projected system. This completes the proof. 
\end{proof}

With Proposition~\ref{prop:closed_loop_as_projected_sys}, Proposition~\ref{prop:properties_projected_sys} in Appendix~\ref{sec:appendix_projected}, and by assuming that all $f$, $g$ and $h$ are Lipschitz, we conclude that the closed-loop system~\eqref{eq:closed_loop_discontinuous} has a unique continuous solution which is continuous w.r.t.\ the initial state. Consequently, we can now study the stability properties of the system trajectories. For that, we use the Lyapunov theory for systems with discontinuous right-hand side, in particular the invariance principle for Carathéodory systems proposed in~\cite{bacciotti2006nonpathological}. 

\begin{theorem} \label{th:stability_constrained}
    Let the agents and controller systems obey the dynamics~\eqref{eq:agents_constrained} and~\eqref{eq:controllers_concrete}, respectively, let Propositions~\ref{prop:agents_constrained} and~\ref{prop:controllers_iseip} hold, and the systems be interconnected by~\eqref{eq:interconnection}. Then, the equilibrium $\rl{y}_i = \opt{\yrm}$, $i\in\setOfAgents$ is globally asymptotically stable, if the sum of all objective functions $f_i:\R^n \rightarrow \R$, $i\in \setOfAgents$ is strictly convex and $\lambda_{il}(0)\geq 0$. 
\end{theorem}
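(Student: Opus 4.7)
The plan is to mirror the proof of Theorem~\ref{th:stability} from the unconstrained case, adapting it to the discontinuous agent dynamics via the nonpathological derivative framework of~\cite{bacciotti2006nonpathological}. As a preliminary step, Proposition~\ref{prop:closed_loop_as_projected_sys} together with Proposition~\ref{prop:properties_projected_sys} already guarantees Assumption~\ref{assum:existence_solution}: the closed-loop~\eqref{eq:closed_loop_discontinuous} admits a unique Carath\'{e}odory solution that is continuous in the initial condition, and the hypothesis $\lambda_{il}(0)\geq 0$ keeps the trajectory inside the positively invariant cone $\lambda_{il}\geq 0$ on which Proposition~\ref{prop:agents_constrained} is valid.

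I would then take the aggregate Lyapunov function $V = \sum_{i\in\setOfAgents} S_i + \sum_{k\in\setOfControllers} W_k$ from~\eqref{eq:lyapunov} and compute its nonpathological derivative along trajectories. Combining the EIP bound of Proposition~\ref{prop:agents_constrained} with the semidefinite surplus $\Psi_i(\err{y}_i) = \err{y}_i^\trans(\nabla f_i(y_i)-\nabla f_i(\rl{y}_i))$ from Corollary~\ref{cor:agents_eid}, and the IFEIP$(\beta_k)$ bound of Proposition~\ref{prop:controllers_iseip}, I obtain
\begin{equation*}
    \npd{V} \leq \sum_{i\in\setOfAgents}\left(\err{u}_i^\trans \err{y}_i - \Psi_i(\err{y}_i)\right) + \sum_{k\in\setOfControllers}\left(\err{\zeta}_k^\trans \err{d}_k - \beta_k \err{\zeta}_k^\trans \err{\zeta}_k\right).
\end{equation*}
Substituting the undirected interconnection~\eqref{eq:interconnection} makes the cross terms cancel by skew-symmetry, $\err{u}^\trans \err{y} + \err{\zeta}^\trans \err{d} = 0$, leaving $\npd{V} \leq -\sum_{i\in\setOfAgents} \Psi_i(\err{y}_i) - \err{y}^\trans K \err{y}$, with $K = (\inci\otimes I_n)(\diag{\beta}\otimes I_n)(\inci\otimes I_n)^\trans$ positive semidefinite.

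Following the reasoning already established in Theorem~\ref{th:stability}, the quadratic form $\err{y}^\trans K \err{y}$ vanishes exactly on the consensus subspace (by Proposition~\ref{prop:kernel_kronecker}), while on that subspace $\sum_i \Psi_i(\err{y}_i)$ is strictly positive away from the unique optimizer $\opt{\yrm}$ because $\sum_i f_i$ is strictly convex. Hence $\npd{V}\leq 0$ with equality only on the equilibrium manifold $\eqM$ of Theorem~\ref{th:requirements_equilibria_constrained}. The proof then closes by invoking the invariance principle for Carath\'{e}odory systems~\cite[Th.~1]{cortes2008discontinuous},~\cite[Lemma~1]{bacciotti2006nonpathological}: boundedness of trajectories follows from the radial unboundedness of $V$, and all solutions approach $\rl{y}_i=\opt{\yrm}$ globally asymptotically.

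The most delicate step is legitimizing the nonpathological-derivative computation across the switching surface $\lambda_{il}=0$. The mode-by-mode bound $\err{\lambda}_i^\trans \dot{\lambda}_i \leq \err{\lambda}_i^\trans \err{g}_i(x_i,\rl{x}_i)$ used inside Proposition~\ref{prop:agents_constrained} must be carried through in the almost-everywhere sense demanded by~\cite{bacciotti2006nonpathological}; this is justified because $\lambda_i(t)$ is absolutely continuous (via the projected-system reformulation of Proposition~\ref{prop:closed_loop_as_projected_sys}) and $V$ is continuously differentiable, so $\npd{V} = \nabla V^\trans \dot{x}$ coincides with the classical derivative wherever the latter exists, and the per-agent bound transfers intact to the aggregated Lyapunov inequality required by the invariance principle.
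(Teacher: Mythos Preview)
Your proposal is correct and follows essentially the same route as the paper's proof: both use the aggregate Lyapunov function~\eqref{eq:lyapunov}, bound its nonpathological derivative via Propositions~\ref{prop:agents_constrained} and~\ref{prop:controllers_iseip} to obtain $\npd{V}\leq -\sum_i \Psi_i(\err{y}_i) - \err{y}^\trans K \err{y}$, and then invoke the invariance principle of~\cite{bacciotti2006nonpathological} together with the consensus-plus-strict-convexity argument from Theorem~\ref{th:stability}. Your additional paragraphs on existence via Proposition~\ref{prop:closed_loop_as_projected_sys} and on the almost-everywhere validity of the mode-by-mode bound are more explicit than the paper's treatment, which simply refers back to the surrounding text and to~\cite[Proposition~3]{bacciotti2006nonpathological}.
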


\begin{proof}
    Consider the Lyapunov function~\eqref{eq:lyapunov} and its time derivative
    \begin{align} \label{eq:lyapunov_npd}
        \npd{V} = \sum_{i \in \setOfAgents} \npd{S}_i + \sum_{k \in \setOfControllers} \dot{W}_k 
    \end{align}
    which holds for almost every $t \in [0,\infty)$, \cite{bacciotti2006nonpathological}. Note that the agent dynamics~\eqref{eq:agents_constrained} are dissipative w.r.t.\ the same supply rate as the agent dynamics~\eqref{eq:agents_concrete} as per Proposition~\ref{prop:agents_constrained}. Thus, with identical arguments as in the derivation of the inequality~\eqref{eq:lyapunov_derivative4}, we have for~\eqref{eq:lyapunov_npd}
    \begin{align} \label{eq:lyapunov_npd_2}
        \npd{V} = \sum_{i \in \setOfAgents} \npd{S}_i + \sum_{k \in \setOfControllers} \dot{W}_k \leq - \sum_{i \in \setOfAgents}\Psi_i(\err{y}_i)  - \err{y}^\trans K \err{y}, 
    \end{align}
    where $K = \left( \inci \otimes I_n \right) \left(\diag{\ifpIdx}\otimes I_n\right) \left( \inci \otimes I_n \right)^\trans$.
    With the invariance principle in~\cite[Proposition~3]{bacciotti2006nonpathological}, all trajectories converge to the largest invariant set in the set $\{ (x,\lambda,\mu,z) \in \R^{\numOfAgents n \times \numOfControllers n \times \numOfEq{} \times \numOfIneq{}} \mid \npd{V}=0 \}$. With the same reasoning as in Theorem~\ref{th:stability}, we conclude that all trajectories are bounded and converge to $\eqM$. Since the sum of all objective functions is strictly convex, the minimizer is unique \cite{boyd2004convex} and thus $\lambda = \lambda^*$ and $\mu = \mu^*$ in steady state. Since all $\err{y}_i$ are in consensus, the states $z$ and outputs $d$ are constant. 
\end{proof}

Note that inequality~\eqref{eq:lyapunov_npd_2} is fulfilled when using any agent dynamics~\eqref{eq:agents_concrete}, \eqref{eq:agents_durchgriff} and~\eqref{eq:agents_constrained}, since they have structurally identical \gls{eip} properties. Thus, convergence is ensured even when agents use different system dynamics~\eqref{eq:agents_concrete},~\eqref{eq:agents_durchgriff}~or~\eqref{eq:agents_constrained} with convex functions, or other dynamics as long as these dynamics are \gls{oseip}($\ofpIdx_i$) and the design requirements from Theorem~\ref{th:requirements_equilibria}~or~\ref{th:requirements_equilibria_constrained} hold.

\section{Convergence Analysis with Directed Communication Topologies} \label{sec:convergence_directed}

In this section, we consider the unconstrained problem in which agents and controllers are connected via a directed communication structure. First, we show with \autoref{} that such a directed communication structure hampers obtaining convergence results as in the previous sections. Afterwards, we obtain convergence results that require, in contrast to the case of undirected communication, verifying a global linear matrix inequality. 

\begin{proposition}
    Consider a set $\setOfAgents$ of agents $\Sigma_i$, $i\in\setOfAgents$ and a set $\setOfControllers$ of controllers $\Pi_k$, $k\in\setOfControllers$. Assume the agent dynamics are \gls{oseip}($\ofpIdx_i$) and the controller dynamics \gls{iseip}($\ifpIdx_k$). Consider a Lyapunov function~\eqref{eq:lyapunov} as in Theorem~\ref{th:stability}. Then, this Lyapunov fails to prove asymptotic stability. 
\end{proposition}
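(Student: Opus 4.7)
The plan is to compute the time derivative of the candidate Lyapunov function along the closed-loop trajectories, substitute the directed interconnection, and expose a bilinear term that cannot be dominated by the passivity-induced negative-definite terms, regardless of the tuning parameters $\ofpIdx_i, \ifpIdx_k$.

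First, I would combine the \gls{oseip}$(\ofpIdx_i)$ dissipation inequalities of the agents and the \gls{iseip}$(\ifpIdx_k)$ dissipation inequalities of the controllers, stacking everything into network-level error variables, to obtain
\begin{align*}
\dot V \le \err u^\trans \err y + \err\zeta^\trans \err d - \err y^\trans K_\ofpIdx \err y - \err\zeta^\trans K_\ifpIdx \err\zeta,
\end{align*}
with $K_\ofpIdx = \diag{\ofpIdx_i}\otimes I_n$ and $K_\ifpIdx = \diag{\ifpIdx_k}\otimes I_n$. Next, substituting the directed interconnection $\err u = -(R_\setOfControllers\otimes I_n)\err d$ and $\err\zeta = (R_\setOfAgents\otimes I_n)^\trans \err y$ collapses the two bilinear terms into the single cross term $\err y^\trans M \err d$ with $M=(R_\setOfAgents-R_\setOfControllers)\otimes I_n$, while the negative-definite terms reduce to $-\err y^\trans Q \err y$ with $Q=(\diag{\ofpIdx_i}+R_\setOfAgents\diag{\ifpIdx_k}R_\setOfAgents^\trans)\otimes I_n \succ 0$. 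Hence the bound reads $\dot V \le -\err y^\trans Q \err y + \err y^\trans M \err d$.

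The final step is to reinterpret this upper bound as a quadratic form in $(\err y^\trans, \err d^\trans)^\trans$ whose matrix has a vanishing lower-right block. Such a block matrix is negative semidefinite only if its off-diagonal block vanishes: if $M\neq 0$, one can pick $\err y_0$ with $M^\trans \err y_0\neq 0$ and set $\err d = t M^\trans \err y_0$ to make the bound grow linearly without limit in $t>0$, while the $Q$-term remains fixed. In the directed case $R_\setOfAgents\neq R_\setOfControllers$, so $M\neq 0$, and no admissible choice of $\ofpIdx_i, \ifpIdx_k>0$ can force $\dot V\le 0$.

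The main obstacle is structural rather than computational: the \gls{iseip} supply rate on each $\Pi_k$ penalises the controller input $\err\zeta_k$, not its output $\err d_k$, so $\err d$ enters the Lyapunov derivative only linearly, leaving no quadratic counterpart to absorb the asymmetry-induced cross term. This observation motivates the subsequent directed-topology analysis, which must strengthen the agent-side passivity (e.g.\ the \isoseip properties of the dynamics~\eqref{eq:agents_durchgriff}) to compensate for the missing quadratic bound on $\err d$.
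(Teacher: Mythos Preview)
Your proof is correct and follows essentially the same approach as the paper: bound $\dot V$ via the \gls{oseip}/\gls{iseip} supply rates, substitute the directed interconnection~\eqref{eq:interconnection_nonsymmetric}, and obtain a quadratic form in $(\err y,\err d)$ whose $(2,2)$-block vanishes, rendering the matrix indefinite whenever $R_\setOfAgents\neq R_\setOfControllers$. The only difference is presentational---the paper writes the block matrix~\eqref{eq:lyapunov_derivative_directed2} explicitly and cites a saddle-point reference for indefiniteness, whereas you give the explicit direction $\err d = tM^\trans \err y_0$ along which the bound becomes positive.
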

\begin{proof}
    Using the \gls{oseip}($\ofpIdx_i$) and \gls{iseip}($\ifpIdx_k$) properties, the time derivative of the Lyapunov function in network variables is
    \begin{align}
        \dot{V} &\leq \err{u}^\trans \err{y} - \err{y}^\trans (\ofpIdx_\setOfAgents \otimes I_n) \err{y} + \err{\zeta}^\trans \err{d} - \err{\zeta}^\trans (\ifpIdx_\setOfControllers \otimes I_n) \err{\zeta}, \label{eq:lyapunov_derivative_directed}
    \end{align}
    with $\ofpIdx_\setOfAgents = \diag{\ofpIdx_i}$ and $\ifpIdx_\setOfControllers = \diag{\ifpIdx_k}$.
    By inserting the directed interconnection structure~\eqref{eq:interconnection_nonsymmetric} in \eqref{eq:lyapunov_derivative_directed}, we obtain
    \begin{align}\label{eq:lyapunov_derivative_directed2}
        \dot{V} \leq -\frac{1}{2}\mat{\err{y} \\ \err{d}}^\trans \mat{2 K &\left(R_\setOfControllers - R_\setOfAgents\right) \\ \left(\left(R_\setOfControllers - R_\setOfAgents\right) \right)^\trans & 0} \otimes I_n \mat{\err{y} \\ \err{d}}, 
    \end{align}
    with $K = \ofpIdx_\setOfAgents + R_\setOfAgents \ifpIdx_\setOfControllers R_\setOfAgents^\trans$. Note that the matrix in~\eqref{eq:lyapunov_derivative_directed2} is indefinite~\cite{benzi2005numerical}, and thus fails to prove asymptotic stability.\footnote{Note that the Kronecker product with $I_n$ does not alter the definiteness of the matrix.}
\end{proof}

To provide remedy, the zero block in the bottom right of the matrix in~\eqref{eq:lyapunov_derivative_directed2} needs to be replaced by at least a positive semidefinite matrix in order to achieve positive semidefiniteness. This can be achieved by either requiring the agent dynamics to exhibit, in addition to the \gls{oseip}($\ofpIdx_i$)  property, an \gls{iseip}($\ifpIdx_i$) property, or by requiring the controller dynamics to be \gls{oseip}($\ofpIdx_k$) in addition to \gls{iseip}$(\ifpIdx_k)$. 
Then, the matrix becomes
\begin{align}\label{eq:matrix_definite}
    \mat{2 K & \left(R_\setOfControllers - R_\setOfAgents\right) \\ \left(R_\setOfControllers - R_\setOfAgents\right)^\trans & 2 L},
\end{align}
with $L = \ofpIdx_\setOfControllers + R_\setOfControllers^\trans \ifpIdx_\setOfAgents R_\setOfControllers$. 
Since an \gls{oseip}($\ofpIdx_k$) property for the controller systems would prevent the systems of having integral action as in Definition~\ref{def:integral_action}, which is instrumental for achieving global optimality, the only option is to require the agent dynamics to be also \gls{iseip}($\ifpIdx_i$). In~\eqref{eq:agents_durchgriff}, a system which is \isoseip$(\ifpIdx_i, \ofpIdx_i)$ is proposed. This system is used in the next theorem to establish convergence to the global optimizer. Consider a group of agents~\eqref{eq:agents_durchgriff} and controllers~\eqref{eq:controllers_concrete} without feedthrough as in Remark~\ref{rem:controllers_eip} interconnected by a directed, non-symmetric communication structure~\eqref{eq:interconnection_nonsymmetric}
\begin{subequations}\label{eq:closed_loop_directed}
    \begin{align} 
        \dot{x} & = -\alpha \nabla f(x + \feedt u) - \alpha \left( R_\setOfControllers \otimes I_n \right)d \\
        \dot{z} & = \beta \left(R_\setOfAgents \otimes I_n\right)^\trans y, \label{eq:closed_loop_directed_2}
    \end{align}
\end{subequations}
with $y = x + \feedt u$, $\gamma = \diag{\gamma_i} \otimes I_n$, $d = z$ and $u = -\left( R_\setOfControllers \otimes I_n \right) d$, and $\alpha, \beta$ as in~\eqref{eq:closed_loop_discontinuous}.

\begin{theorem} \label{th:stability_directed}
    Consider the closed-loop system~\eqref{eq:closed_loop_directed}. Let the dynamics of the agents $i \in \setOfAgents$ and controllers $k\in\setOfControllers$ fulfill Propositions~\ref{prop:agents_oseip_iseip} and~\ref{prop:controllers_iseip}, respectively. Assume that the matrices $R_\setOfAgents$ and $R_\setOfControllers$ of the non-symmetric communication structure~\eqref{eq:interconnection_nonsymmetric} fulfill property~\eqref{eq:nullspace_property}. Then, the equilibrium $\rl{y}_i = \opt{\yrm}$, $i\in\setOfAgents$ is a globally asymptotically stable equilibrium point of the closed-loop system~\eqref{eq:closed_loop_directed}, if the singular matrix 
    \begin{equation} \label{eq:condition1_theorem_directed}
        \mat{2 K & \left(R_\setOfControllers - R_\setOfAgents\right) \\ \left(R_\setOfControllers - R_\setOfAgents\right)^\trans & 2 L},
    \end{equation} 
    with $K = \ofpIdx_\setOfAgents$ and $L = R_\setOfControllers^\trans {\ifpIdx}_\setOfAgents R_\setOfControllers$, is positive semidefinite and the matrix $\beta_\setOfControllers R_\setOfAgents^\trans K^{-1} \tilde{R}$, with $\tilde{R} = R_\setOfControllers - R_\setOfAgents$ and $\beta_\setOfControllers = \diag{\beta_k}$, has eigenvalues which are either zero or have a positive real part. 
\end{theorem}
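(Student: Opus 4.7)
The plan is to build a quadratic Lyapunov function from the individual storage functions, exploit the \gls{isoseip} supply rate of the agents and the \gls{eip} supply rate of the (feedthrough-free) controllers to obtain $\dot V \leq 0$ via the first hypothesis, and then apply LaSalle's invariance principle, using the second eigenvalue hypothesis together with the nullspace property~\eqref{eq:nullspace_property} to push the resulting invariant set to the consensus/optimality manifold $\eqM$.

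Concretely, I would take $V = \sum_{i\in\setOfAgents} S_i + \sum_{k\in\setOfControllers} W_k$ with $S_i$ from Proposition~\ref{prop:agents_oseip_iseip} and $W_k$ from Proposition~\ref{prop:controllers_iseip}. Since the controller output in~\eqref{eq:closed_loop_directed_2} is $d=z$ (no feedthrough), the relevant supply rate is \gls{eip} rather than \gls{iseip} (see Remark~\ref{rem:controllers_eip}). Summing the supply rates in stacked variables and substituting the directed interconnection $\err u = -(R_\setOfControllers\otimes I_n)\err d$ and $\err\zeta = (R_\setOfAgents\otimes I_n)^\trans \err y$ yields, after collecting terms,
\begin{equation*}
\dot V \leq -\tfrac{1}{2}\mat{\err y \\ \err d}^\trans \left(\mat{2K & R_\setOfControllers-R_\setOfAgents \\ (R_\setOfControllers-R_\setOfAgents)^\trans & 2L}\otimes I_n\right)\mat{\err y \\ \err d},
\end{equation*}
which is exactly the matrix in~\eqref{eq:condition1_theorem_directed}. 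The first hypothesis makes this matrix positive semidefinite, so $\dot V \leq 0$ and all trajectories are bounded.

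LaSalle's invariance principle then reduces the problem to analyzing the largest invariant set $\mathcal Z$ contained in the kernel of the above quadratic form. Using $K\succ 0$, the upper block row of the kernel equation gives on $\mathcal Z$ the algebraic relation $\err y = -\tfrac{1}{2}(K^{-1}\tilde R \otimes I_n)\err d$. Substituting this together with $\err d = \err z$ into the controller dynamics~\eqref{eq:closed_loop_directed_2}, written in error variables (using $R_\setOfAgents^\trans 1_\numOfAgents = 0$ so that $\rl y$ drops out), produces the reduced linear system
\begin{equation*}
\dot{\err z} = -\tfrac{1}{2}\bigl(\beta_\setOfControllers R_\setOfAgents^\trans K^{-1}\tilde R \otimes I_n\bigr)\err z.
\end{equation*}
The second hypothesis places the spectrum of this system matrix in the closed left half-plane with zero as the only admissible imaginary eigenvalue, so $\err z$ converges into the nullspace of $\beta_\setOfControllers R_\setOfAgents^\trans K^{-1}\tilde R \otimes I_n$. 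Because $\beta_\setOfControllers$ is invertible, any such $\err z$ satisfies $(R_\setOfAgents^\trans \otimes I_n)\bigl((K^{-1}\tilde R \otimes I_n)\err z\bigr) = 0$; invoking~\eqref{eq:nullspace_property} and Proposition~\ref{prop:kernel_kronecker} then forces $(K^{-1}\tilde R \otimes I_n)\err z = 1_\numOfAgents \otimes a$ for some $a\in\R^n$, i.e., $\err y$ is in consensus. Consensus combined with the design requirement~\eqref{eq:condition_equilibrium_agents} enforced at equilibria by Theorem~\ref{th:requirements_equilibria} forces $\err y = 0$ and hence $y_i \to \opt{\yrm}$ for every $i\in\setOfAgents$, yielding GAS of the claimed equilibrium.

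The main obstacle I anticipate is twofold: first, ensuring that the zero eigenvalues of $\beta_\setOfControllers R_\setOfAgents^\trans K^{-1}\tilde R$ are semisimple so that $\err z$ genuinely converges on $\mathcal Z$ rather than merely remaining bounded; and second, verifying that the algebraic constraint on $\mathcal Z$ is dynamically self-consistent with the agent equation~\eqref{eq:closed_loop_directed}, which will likely require invoking the \gls{eio} property of the specific agent dynamics~\eqref{eq:agents_durchgriff} and controllers~\eqref{eq:controllers_concrete} to anchor $\err x$ once $\err z$ has stabilized.
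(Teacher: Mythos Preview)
Your proposal follows the paper's proof almost step for step: the same Lyapunov function $V=\sum_i S_i+\sum_k W_k$, the same quadratic bound producing the block matrix~\eqref{eq:condition1_theorem_directed}, LaSalle, the algebraic relation from the first block row of $Uv=0$, and the same reduced linear dynamics $\dot{\err z}=-\bigl(\beta_\setOfControllers R_\setOfAgents^\trans K^{-1}\tilde R\otimes I_n\bigr)\err z$ on the invariant set (up to the factor $\tfrac12$, which the paper drops). The only divergence is in the endgame: you try to squeeze consensus of $\err y$ out of the nullspace of the reduced system matrix via Proposition~\ref{prop:kernel_kronecker} and then invoke Theorem~\ref{th:requirements_equilibria}, whereas the paper argues more directly that once $\err z$ (hence $d$, hence $u$) settles to a constant, the agents equilibrate and Theorem~\ref{th:requirements_equilibria} places every equilibrium in $\eqM$. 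Your consensus route is not incorrect but does not by itself force $\err y=0$ without that same ``agents at rest'' step, so the paper's line is the cleaner way to close. Your first flagged obstacle---semisimplicity of the zero eigenvalue so that $\err z$ actually converges rather than merely staying bounded---is a real gap that the paper's proof also leaves unaddressed.
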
 

\begin{proof}
    Since the agent dynamics~\eqref{eq:agents_durchgriff} are \isoseip($\ifpIdx_i,\ofpIdx_i$) as in Proposition~\ref{prop:agents_oseip_iseip} and the controller dynamics~\eqref{eq:controllers_concrete} are \gls{eip}, the derivative of the Lyapunov function~\eqref{eq:lyapunov} is
    \begin{align} \label{eq:lyapunov_derivative_directed3}
            \dot{V} \leq -\frac{1}{2}\mat{\err{y} \\ \err{d}}^\trans \mat{2 K & R_\setOfControllers - R_\setOfAgents \\ \left(R_\setOfControllers - R_\setOfAgents\right)^\trans & 2 L} \otimes I_n \mat{\err{y} \\ \err{d}},
    \end{align}
    with $K$ and $L$ as stated in the theorem. Since it is required by the theorem that the matrix in~\eqref{eq:lyapunov_derivative_directed3} is positive semidefinite, $\dot{V}\leq 0$ holds and boundedness of the trajectories is guaranteed. For proving that, all trajectories converge to the set~$\eqM$, we invoke Lasalle's Invariance Principle. The proof consists of showing that the trajectory converges to a point, i.e., there are no periodic oscillations. Since all equilibria are in the set $\eqM$ as per Theorem~\eqref{th:requirements_equilibria}, it suffices to show that convergence is to a point. 
    
    Denote $v^\trans = (\err{y}^\trans, \err{d}^\trans)$ and $U$ the matrix in~\eqref{eq:lyapunov_derivative_directed3}. Note that a vector $v \in \R^{(\numOfAgents+\numOfControllers)n}$ corresponds to the set $\mathcal{L} = \{(x,z) \in \R^{(\numOfAgents+\numOfControllers)n} \mid \dot{V} = 0\}$ if and only if $v^\trans U v = 0$ holds. To inspect which vectors $v$ belong to the set $\mathcal{L}$, we compute 
    \begin{align}\label{eq:set_invariance}
        \frac{1}{2}v^\trans U v = \err{y}^\trans \mathsf{K} \err{y} + \err{y}^\trans \left(\left(R_\setOfControllers - R_\setOfAgents\right) \otimes I_n \right) \err{d} + \err{d}^\trans \mathsf{L} \err{d},
    \end{align}
    where $\mathsf{K} = K \otimes I_n$ and $\mathsf{L} = L \otimes I_n$ for convenience. With \cite[Observation~7.1.6]{horn2012matrix}, it holds that $v^\trans U v = 0$ if and only if $Uv = 0$. The first equation in $Uv = 0$, i.e,
    \begin{align} \label{eq:lyapunov_derivative_directed_zero1}
        \mathsf{K} \err{y} + \left( \left(R_\setOfControllers - R_\setOfAgents\right) \otimes I_n \right) \err{d} = 0,
    \end{align}
    relates the output of agents and controllers in the set $\mathcal{L}$. Solving~\eqref{eq:lyapunov_derivative_directed_zero1} w.r.t.\ $\err{y}$ and inserting it in the system dynamics~\eqref{eq:closed_loop_directed_2} of the controllers, we have
    \begin{align} \label{eq:dynamics_in_lasalle}
        \dot{\err{d}} = \dot{\err{z}} = -\beta \left(R_\setOfAgents^\trans \otimes I_n \right) \mathsf{K}^{-1} \left( \left(R_\setOfControllers - R_\setOfAgents\right) \otimes I_n \right) \err{d},
    \end{align}
    which describes the dynamics in $\mathcal{L}$. Thus, it suffices to show that the output $d$ and state $z$ in~\eqref{eq:dynamics_in_lasalle} converge to a steady state. Note that the system matrix in~\eqref{eq:dynamics_in_lasalle} has not full rank and thus cannot be Hurwitz. However, if the matrix in~\eqref{eq:dynamics_in_lasalle} has only eigenvalues at the origin or with a positive real part, all trajectories $d(t)$ converge to a steady state. Thus, the input of the agent systems~\eqref{eq:agents_durchgriff} is constant, and the system reaches a steady state. The condition in the theorem that $\beta_\setOfControllers R_\setOfAgents^\trans K^{-1} \tilde{R}$ has only eigenvalues at the origin or with a positive real part implies that the matrix in~\eqref{eq:dynamics_in_lasalle} has also only eigenvalues at the origin or with a positive real part, since they are related by the Kronecker product with the unity matrix. Since the conditions of Theorem~\ref{th:requirements_equilibria} are fulfilled, all equilibria correspond to the global minimizer $\rl{y}_i = \opt{\yrm}$, which concludes the proof.
\end{proof}

\begin{remark}
    For undirected communication we have $R_\setOfAgents = R_\setOfControllers$ and thus $\err{R} = 0$, yielding $\dot{\err{d}} = 0$. Thus, the conditions in Theorem~\ref{th:stability_directed} of zero eigenvalues or with positive real part are automatically fulfilled, and we recover the results of Theorem~\ref{th:stability}. 
\end{remark}

With the following corollary, we can drop the second condition in Theorem~\ref{th:stability_directed} when a certain number of controllers are used. 

\begin{corollary}
    Consider the setup of Theorem~\ref{th:stability_directed}. Then, the equilibrium $\rl{y}_i = \opt{\yrm}$, $i\in\setOfAgents$ is a globally exponentially stable equilibrium point of the closed-loop system if exactly $\numOfControllers = \numOfAgents-1$ controllers are used and the matrix 
    \begin{align} \label{eq:condition_corollary_directed}
        \mat{2 K & R_\setOfControllers - R_\setOfAgents \\ \left(R_\setOfControllers - R_\setOfAgents\right)^\trans & 2 L},
    \end{align} 
    with $K$ and $L$ as in~\eqref{eq:condition1_theorem_directed} is positive definite. 
\end{corollary}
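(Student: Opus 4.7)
\medskip

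\noindent\emph{Proof plan.} The plan is to piggyback on the Lyapunov computation from the proof of Theorem~\ref{th:stability_directed} and exploit the strengthened hypotheses to obtain exponential (rather than merely asymptotic) decay. Applying the \isoseip$(\ifpIdx_i,\ofpIdx_i)$ property of the agents (Proposition~\ref{prop:agents_oseip_iseip}) and the \gls{eip} property of the feedthrough-free controllers (cf.\ Remark~\ref{rem:controllers_eip}) to the Lyapunov function $V$ in~\eqref{eq:lyapunov} yields
\begin{align*}
\dot V \;\leq\; -\tfrac{1}{2}\,v^\trans (U \otimes I_n)\,v, \qquad v^\trans = (\err y^\trans,\ \err d^\trans),
\end{align*}
where $U$ is exactly the matrix in~\eqref{eq:condition_corollary_directed}. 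The key observation motivating the hypothesis $\numOfControllers = \numOfAgents - 1$ is that in this case the nullspace property~\eqref{eq:nullspace_property} forces $R_\setOfControllers \in \R^{\numOfAgents \times (\numOfAgents-1)}$ to have full column rank; consequently $L = R_\setOfControllers^\trans \ifpIdx_\setOfAgents R_\setOfControllers$ is strictly positive definite, so positive definiteness of $U$ is not ruled out a priori and the additional eigenvalue hypothesis of Theorem~\ref{th:stability_directed} becomes unnecessary.

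Once $U \succ 0$, there exists $c_1>0$ with $\dot V \leq -c_1(\|\err y\|^2 + \|\err d\|^2)$, so the remaining task is to convert this output-side estimate into a state-side contraction. From the agent output equation in~\eqref{eq:agents_durchgriff} together with the interconnection~\eqref{eq:interconnection_nonsymmetric}, one has $\err y = \err x - \feedt\,(R_\setOfControllers\otimes I_n)\,\err d$; from the feedthrough-free controller output, $\err d = \err z$. These linear relations, combined with the triangle inequality, produce positive constants $c_\mathrm{l}, c_\mathrm{u}$ with
\begin{align*}
c_\mathrm{l}(\|\err x\|^2 + \|\err z\|^2) \;\leq\; \|\err y\|^2 + \|\err d\|^2 \;\leq\; c_\mathrm{u}(\|\err x\|^2 + \|\err z\|^2).
\end{align*}
Since $V$ is a positive definite quadratic form in $(\err x, \err z)$, it is sandwiched between multiples of $\|\err x\|^2 + \|\err z\|^2$, and the two estimates combine into $\dot V \leq -\lambda V$ for some $\lambda>0$. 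The Comparison Lemma then yields $V(t)\leq V(0)e^{-\lambda t}$, i.e., global exponential convergence of $(x,z)$ and hence of $y$ to the unique equilibrium at which $y_i = \opt\yrm$. Uniqueness of $\rl d$ at that equilibrium follows from the full column rank of $R_\setOfControllers$: the equation $(R_\setOfControllers\otimes I_n)\,\rl d = -\nabla f(\rl y)$ has a unique solution, its right-hand side lying in the range of $R_\setOfControllers\otimes I_n$ thanks to $1_\numOfAgents^\trans\nabla f(1_\numOfAgents\otimes\opt\yrm) = \sum_i \nabla f_i(\opt\yrm) = 0$.

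The main obstacle I anticipate is the bookkeeping of the change of variables between $(\err y,\err d)$, in which $\dot V$ is naturally bounded, and $(\err x,\err z)$, in which $V$ is defined. The direct feedthrough through $\feedt$ prevents these pairs from being equal, so one must verify that the linear map connecting them is bijective and bounded in both directions; the hypothesis $\numOfControllers = \numOfAgents-1$ (granting full column rank of $R_\setOfControllers$) and the explicit feedthrough formula are precisely what make this transparent. Once the equivalence of norms is in place, the passage from $\dot V \leq -c_1(\|\err y\|^2 + \|\err d\|^2)$ to the exponential contraction $\dot V \leq -\lambda V$ is a routine application of the Comparison Lemma.
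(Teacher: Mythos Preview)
Your approach matches the paper's: both use the Lyapunov function from Theorem~\ref{th:stability_directed}, and both identify the role of $\numOfControllers=\numOfAgents-1$ as forcing $R_\setOfControllers$ to have full column rank so that $L=R_\setOfControllers^\trans\ifpIdx_\setOfAgents R_\setOfControllers$ becomes positive definite and the block matrix~\eqref{eq:condition_corollary_directed} can be strictly positive definite. Your treatment is in fact more complete than the paper's terse proof, which records only the rank observation and does not spell out the passage from $\dot V\le -c\|v\|^2$ to exponential decay of the state or the uniqueness of the equilibrium; the change-of-variables and Comparison-Lemma argument you sketch is exactly the natural way to fill those details in.

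One small correction: the linear map $(\err x,\err z)\mapsto(\err y,\err d)$ given by $\err d=\err z$ and $\err y=\err x-\feedt(R_\setOfControllers\otimes I_n)\err z$ is block upper-triangular with identity diagonal blocks and is therefore \emph{always} bijective with bounded inverse, regardless of the rank of $R_\setOfControllers$; the full column rank is needed only for the uniqueness of $\rl d$ (and hence of the equilibrium point), which you correctly argue separately.
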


\begin{proof}
    Consider the same Lyapunov function as in Theorem~\ref{th:stability_directed}. Note that the matrices $K$ and $L$ in~\eqref{eq:lyapunov_derivative_directed} are, in general, positive definite and positive semidefinite, respectively. If $\numOfControllers = \numOfAgents-1$, matrix $L = R_\setOfControllers^\trans \ifpIdx_\setOfControllers  R_\setOfControllers $ is positive definite since $R_\setOfControllers$ has full column rank. Thus, depending on the communication topology, the matrix~\eqref{eq:condition_corollary_directed} may be positive definite, which is set as a condition in the corollary.
\end{proof}

Note that both results for directed communication structures require, in contrast to the case with undirected topologies, to check global LMIs of the size $\numOfAgents+\numOfControllers$, i.e., the added number of agents and controllers. While such a check is numerically cheap and feasible for small networks, it becomes costly for larger networks with many participants. Thus, undirected communication topologies seem preferable for large-scale networks.

\section{Numerical Results} \label{sec:results}

In this section, we give a numerical example to illustrate some of the theoretical results obtained throughout the paper. In particular, we put an emphasis on showing that the agents may use different optimization algorithms and may leave the optimization without compromising global optimality and convergence nor requiring a global initialization. 

We consider 100 agents with scalar objective functions $f_i:\R \rightarrow \R$ and constraints randomly generated out of the three possible objective function
\begin{align}
    \text{Model 1:} \quad & \begin{array}{l} f_i(\yrm) = a_i \yrm^2 + b_i \yrm\end{array}  \\
    \text{Model 2:} \quad &\begin{array}{l} f_i(\yrm) = a_i \yrm^2 + b_i \yrm \\ \text{s.t. } \yrm \leq 0.5   \end{array} \\
    \text{Model 3:} \quad &\begin{array}{l} f_i(\yrm) =  e^{\yrm + b_i} + e^{-(\yrm + b_i)}  \end{array} .
\end{align}
The parameters $a_i \in [0, 2]$ and $b_i \in [-2, 2]$ are randomly generated with uniform distributions. Note that the objective function models 1 and 2 have an optimizer at $\opt{\yrm} = -\frac{b_i}{2a_i}$, and model 3 at $\opt{\yrm} = -b_i$. Since all $b_i$ are uniformly distributed with a mean at zero, it is to be expected that the $\numOfAgents$ agents have an optimizer near to zero.

Further, we assume each agent has a controller with $\beta_k=35$. The agents use heterogeneous dynamics, i.e.,~\eqref{eq:agents_concrete} or~\eqref{eq:agents_durchgriff}, randomly, for the unconstrained model 1 and 3. Dynamics~\eqref{eq:agents_constrained} are used for all agents having model 2. The parameters of the agent dynamics are set to $\alpha_i = 1$ and $\feedt_i = 1$. For the generalized communication structure~\eqref{eq:communication_bipartite_incidence}, a random matrix fulfilling property~\eqref{eq:nullspace_property} is generated. The probability of an agent $i \in \setOfAgents$ to exchange information with agent $j \in \setOfAgents$ is 10\%, such that every agent is on average connected to 10 other agents. At time $t = \SI{100}{\second}$, the group of 100 agents is separated into two groups of 50 agents, which form two independent groups. The groups are chosen such that the 50 agents with the greatest $b_i$, roughly speaking with $b_i \in [0, 2]$, are in a group. Since models 1 and 2 have on average in this group an optimizer at $\opt{\yrm}=-0.5$ and model 3 at $\opt{\yrm}=-1$, we expect an optimizer at around $-0.75$ for this group. Conversely, the 50 agents with the smallest $b_i$, roughly speaking with $b_i \in [-2, 0]$, are in the second group. Analogously, for this group, an optimizer at $0.75$ is expected, which is restricted to $0.5$ due to the constraint of model 2. Then, two connected groups of agents remain in individual distributed optimizations. The probability to be connected with another agent remains 10\%, such that every agent is on average connected to 5 agents of its group. 

In Figure~\ref{fig:estimations}, the local estimations of the agents are shown. Until time $t = \SI{100}{\second}$, all agents are connected and the global minimizer $\opt{\yrm} = 0.0258$ is found cooperatively. At time $t = \SI{100}{\second}$, the group splits in two, e.g., due to network failure of critical links. However, the two groups find cooperatively their respective global minima of $\opt{\yrm}_1 = 0.5$ and $\opt{\yrm}_2 = -0.7789$. In Figure~\ref{fig:error}, the exponential convergence (linear in a logarithmic scale) is shown. The exponential convergence for similar algorithms was already established in~\cite{kia2015distributed,shi2015extra}. These methods, however, require a global initialization every time an agent leaves the network. Our method does not require any global initialization at any time, provided that the agent dynamics fulfill the design requirements for global optimality of Theorem~\ref{th:requirements_equilibria}~and~\ref{th:requirements_equilibria_constrained}, which is one of the main contributions of the work at hand. Furthermore, we show in the example that ensuring global optimality and convergence in distributed optimization is possible even when agents use heterogeneous optimization dynamics.   

\begin{figure}[t]
    \centering
    \includegraphics{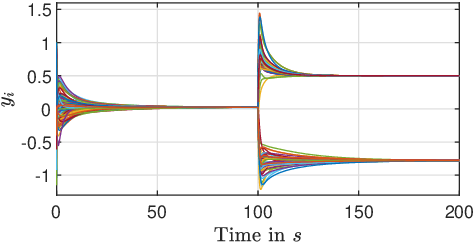}
    \caption{Estimations of the global optimizer of all agents}
    \label{fig:estimations}
\end{figure}

\begin{figure}[t]
    \centering
    \includegraphics{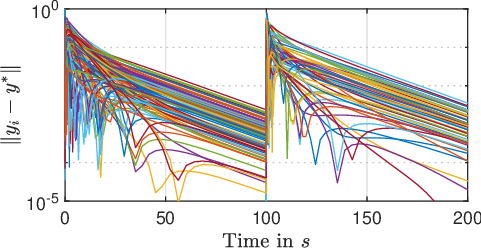}
    \caption{Error of the estimations of all agents w.r.t.\ to the global optimizer in logarithmic scale}
    \label{fig:error}
\end{figure}

\section{Conclusion} \label{sec:conclusion}

In this paper, we have presented a novel passivity-based perspective for distributed optimization algorithms, composed of agent and controller systems. We derive local design requirements for the agent and controller systems ensuring that the distributed optimization always converges to the global optimizer. Therefore, in contrast to the literature, we do not focus on any specific algorithm which all agents have to follow, but derive conditions on the agent dynamics to achieve global optimality and convergence. In particular, the approach works without any global initialization and by exchanging only a single variable. Thus, the agents may leave and rejoin the distributed optimization without compromising global optimality and convergence. 
%
%
Future work will study discrete-time communication allowing a robust practical implementation and the EIP analysis of different agent and controller dynamics to allow more heterogeneous dynamics.

\printbibliography

@book{boyd2004convex,
  title={Convex optimization},
  author={Boyd, Stephen P and Vandenberghe, Lieven},
  year={2004},
  publisher={Cambridge university press}
}

@article{cherukuri2016asymptotic,
  title={Asymptotic convergence of constrained primal--dual dynamics},
  author={Cherukuri, Ashish and Mallada, Enrique and Cort{\'e}s, Jorge},
  journal={Systems \& Control Letters},
  volume={87},
  pages={10--15},
  year={2016},
  publisher={Elsevier}
}

@book{eves1980elementary,
  title={Elementary matrix theory},
  author={Eves, Howard Whitley},
  year={1980},
  publisher={Courier Corporation}
}

@article{benzi2005numerical,
  title={Numerical solution of saddle point problems},
  author={Benzi, Michele and Golub, Gene H and Liesen, J{\"o}rg},
  journal={Acta numerica},
  volume={14},
  pages={1--137},
  year={2005},
  publisher={Cambridge University Press}
}

@book{horn2012matrix,
  title={Matrix analysis},
  author={Horn, Roger A and Johnson, Charles R},
  year={2012},
  publisher={Cambridge university press}
}

@article{boyd2011distributed,
  title={Distributed optimization and statistical learning via the alternating direction method of multipliers},
  author={Boyd, Stephen and Parikh, Neal and Chu, Eric and Peleato, Borja and Eckstein, Jonathan and others},
  journal={Foundations and Trends{\textregistered} in Machine learning},
  volume={3},
  number={1},
  pages={1--122},
  year={2011},
  publisher={Now Publishers, Inc.}
}

@article{shi2015extra,
  title={Extra: An exact first-order algorithm for decentralized consensus optimization},
  author={Shi, Wei and Ling, Qing and Wu, Gang and Yin, Wotao},
  journal={SIAM Journal on Optimization},
  volume={25},
  number={2},
  pages={944--966},
  year={2015},
  publisher={SIAM}
}

@article{kia2015distributed,
  title={Distributed convex optimization via continuous-time coordination algorithms with discrete-time communication},
  author={Kia, Solmaz S and Cort{\'e}s, Jorge and Martinez, Sonia},
  journal={Automatica},
  volume={55},
  pages={254--264},
  year={2015},
  publisher={Elsevier}
}

@article{li2020input,
  title={Input-feedforward-passivity-based distributed optimization over jointly connected balanced digraphs},
  author={Li, Mengmou and Chesi, Graziano and Hong, Yiguang},
  journal={IEEE Transactions on Automatic Control},
  volume={66},
  number={9},
  pages={4117--4131},
  year={2020},
  publisher={IEEE}
}

@article{li2020distributed,
  title={Distributed optimization with event-triggered communication via input feedforward passivity},
  author={Li, Mengmou and Su, Lanlan and Liu, Tao},
  journal={IEEE Control Systems Letters},
  volume={5},
  number={1},
  pages={283--288},
  year={2020},
  publisher={IEEE}
}

@article{hatanaka2018passivity,
  title={Passivity-based distributed optimization with communication delays using PI consensus algorithm},
  author={Hatanaka, Takeshi and Chopra, Nikhil and Ishizaki, Takayuki and Li, Na},
  journal={IEEE Transactions on Automatic Control},
  volume={63},
  number={12},
  pages={4421--4428},
  year={2018},
  publisher={IEEE}
}

@article{notarnicola2023gradient,
  title={The Gradient Tracking Is a Distributed Integral Action},
  author={Notarnicola, Ivano and Bin, Michelangelo and Marconi, Lorenzo and Notarstefano, Giuseppe},
  journal={IEEE Transactions on Automatic Control},
  year={2023},
  publisher={IEEE}
}

@article{moradian2022distributed,
  title={A distributed continuous-time modified Newton--Raphson algorithm},
  author={Moradian, Hossein and Kia, Solmaz S},
  journal={Automatica},
  volume={136},
  pages={109886},
  year={2022},
  publisher={Elsevier}
}

@inproceedings{esteki2023distributed,
  title={Distributed unconstrained optimization with time-varying cost functions},
  author={Esteki, Amir-Salar and Kia, Solmaz S},
  booktitle={2023 European Control Conference (ECC)},
  pages={1--6},
  year={2023},
  organization={IEEE}
}

@article{bof2018multiagent,
  title={Multiagent Newton--Raphson optimization over lossy networks},
  author={Bof, Nicoletta and Carli, Ruggero and Notarstefano, Giuseppe and Schenato, Luca and Varagnolo, Damiano},
  journal={IEEE Transactions on Automatic Control},
  volume={64},
  number={7},
  pages={2983--2990},
  year={2018},
  publisher={IEEE}
}

@article{nedic2017achieving,
  title={Achieving geometric convergence for distributed optimization over time-varying graphs},
  author={Nedic, Angelia and Olshevsky, Alex and Shi, Wei},
  journal={SIAM Journal on Optimization},
  volume={27},
  number={4},
  pages={2597--2633},
  year={2017},
  publisher={SIAM}
}

@article{qu2017harnessing,
  title={Harnessing smoothness to accelerate distributed optimization},
  author={Qu, Guannan and Li, Na},
  journal={IEEE Transactions on Control of Network Systems},
  volume={5},
  number={3},
  pages={1245--1260},
  year={2017},
  publisher={IEEE}
}

@inproceedings{wang2010control,
  title={Control approach to distributed optimization},
  author={Wang, Jing and Elia, Nicola},
  booktitle={2010 48th Annual Allerton Conference on Communication, Control, and Computing (Allerton)},
  pages={557--561},
  year={2010},
  organization={IEEE}
}

@article{gharesifard2013distributed,
  title={Distributed continuous-time convex optimization on weight-balanced digraphs},
  author={Gharesifard, Bahman and Cort{\'e}s, Jorge},
  journal={IEEE Transactions on Automatic Control},
  volume={59},
  number={3},
  pages={781--786},
  year={2013},
  publisher={IEEE}
}

@article{yang2019survey,
  title={A survey of distributed optimization},
  author={Yang, Tao and Yi, Xinlei and Wu, Junfeng and Yuan, Ye and Wu, Di and Meng, Ziyang and Hong, Yiguang and Wang, Hong and Lin, Zongli and Johansson, Karl H},
  journal={Annual Reviews in Control},
  volume={47},
  pages={278--305},
  year={2019},
  publisher={Elsevier}
}

@article{touri2023unified,
  title={A unified framework for continuous-time unconstrained distributed optimization},
  author={Touri, Behrouz and Gharesifard, Bahman},
  journal={SIAM Journal on Control and Optimization},
  volume={61},
  number={4},
  pages={2004--2020},
  year={2023},
  publisher={SIAM}
}

@article{varagnolo2015newton,
  title={Newton-Raphson consensus for distributed convex optimization},
  author={Varagnolo, Damiano and Zanella, Filippo and Cenedese, Angelo and Pillonetto, Gianluigi and Schenato, Luca},
  journal={IEEE Transactions on Automatic Control},
  volume={61},
  number={4},
  pages={994--1009},
  year={2015},
  publisher={IEEE}
}

@inproceedings{zanella2011newton,
  title={Newton-Raphson consensus for distributed convex optimization},
  author={Zanella, Filippo and Varagnolo, Damiano and Cenedese, Angelo and Pillonetto, Gianluigi and Schenato, Luca},
  booktitle={2011 50th IEEE Conference on Decision and Control and European Control Conference},
  pages={5917--5922},
  year={2011},
  organization={IEEE}
}

@article{lei2016primal,
  title={Primal--dual algorithm for distributed constrained optimization},
  author={Lei, Jinlong and Chen, Han-Fu and Fang, Hai-Tao},
  journal={Systems \& Control Letters},
  volume={96},
  pages={110--117},
  year={2016},
  publisher={Elsevier}
}

@inproceedings{yao2018distributed,
  title={Distributed finite-time optimization},
  author={Yao, Lisha and Yuan, Ye and Sundaram, Shreyas and Yang, Tao},
  booktitle={2018 IEEE 14th International Conference on Control and Automation (ICCA)},
  pages={147--154},
  year={2018},
  organization={IEEE}
}

@article{dantzig1960decomposition,
  title={Decomposition principle for linear programs},
  author={Dantzig, George B and Wolfe, Philip},
  journal={Operations research},
  volume={8},
  number={1},
  pages={101--111},
  year={1960},
  publisher={INFORMS}
}

@article{zhu2011distributed,
  title={On distributed convex optimization under inequality and equality constraints},
  author={Zhu, Minghui and Martinez, Sonia},
  journal={IEEE Transactions on Automatic Control},
  volume={57},
  number={1},
  pages={151--164},
  year={2011},
  publisher={IEEE}
}

@article{nedic2009distributed,
  title={Distributed subgradient methods for multi-agent optimization},
  author={Nedic, Angelia and Ozdaglar, Asuman},
  journal={IEEE Transactions on Automatic Control},
  volume={54},
  number={1},
  pages={48--61},
  year={2009},
  publisher={IEEE}
}

@article{bazerque2009distributed,
  title={Distributed spectrum sensing for cognitive radio networks by exploiting sparsity},
  author={Bazerque, Juan Andr{\'e}s and Giannakis, Georgios B},
  journal={IEEE Transactions on Signal Processing},
  volume={58},
  number={3},
  pages={1847--1862},
  year={2009},
  publisher={IEEE}
}

@article{kekatos2012distributed,
  title={Distributed robust power system state estimation},
  author={Kekatos, Vassilis and Giannakis, Georgios B},
  journal={IEEE Transactions on Power Systems},
  volume={28},
  number={2},
  pages={1617--1626},
  year={2012},
  publisher={IEEE}
}

@article{primadianto2016review,
  title={A review on distribution system state estimation},
  author={Primadianto, Anggoro and Lu, Chan-Nan},
  journal={IEEE Transactions on Power Systems},
  volume={32},
  number={5},
  pages={3875--3883},
  year={2016},
  publisher={IEEE}
}

@article{wang2017distributed,
  title={Distributed optimization approaches for emerging power systems operation: A review},
  author={Wang, Yamin and Wang, Shouxiang and Wu, Lei},
  journal={Electric Power Systems Research},
  volume={144},
  pages={127--135},
  year={2017},
  publisher={Elsevier}
}

@article{molzahn2017survey,
  title={A survey of distributed optimization and control algorithms for electric power systems},
  author={Molzahn, Daniel K and D{\"o}rfler, Florian and Sandberg, Henrik and Low, Steven H and Chakrabarti, Sambuddha and Baldick, Ross and Lavaei, Javad},
  journal={IEEE Trans. on Smart Grid},
  volume={8},
  number={6},
  pages={2941--2962},
  year={2017},
  publisher={IEEE}
}

@article{xi_DEXTRAFast_2017,
  title = {{{DEXTRA}}: {{A Fast Algorithm}} for {{Optimization Over Directed Graphs}}},
  shorttitle = {{{DEXTRA}}},
  author = {Xi, Chenguang and Khan, Usman A.},
  year = {2017},
  journal = {IEEE Transactions on Automatic Control},
  volume = {62},
  number = {10},
  pages = {4980--4993},
}

@article{liu_SecondOrderMultiAgent_2015,
  title = {A {{Second-Order Multi-Agent Network}} for {{Bound-Constrained Distributed Optimization}}},
  author = {Liu, Qingshan and Wang, Jun},
  year = {2015},
  journal = {IEEE Transactions on Automatic Control},
  volume = {60},
  number = {12},
  pages = {3310--3315},
}

@article{zeng_DistributedContinuousTime_2017,
  title = {Distributed {{Continuous-Time Algorithm}} for {{Constrained Convex Optimizations}} via {{Nonsmooth Analysis Approach}}},
  author = {Zeng, Xianlin and Yi, Peng and Hong, Yiguang},
  year = {2017},
  journal = {IEEE Transactions on Automatic Control},
  volume = {62},
  number = {10},
  pages = {5227--5233},
}

@article{strehle2021unified,
  title={A unified passivity-based framework for control of modular islanded AC microgrids},
  author={Strehle, Felix and Nahata, Pulkit and Malan, Albertus Johannes and Hohmann, S{\"o}ren and Ferrari-Trecate, Giancarlo},
  journal={IEEE Transactions on Control Systems Technology},
  volume={30},
  number={5},
  pages={1960--1976},
  year={2021},
  publisher={IEEE}
}

@book{sepulchre2012constructive,
  title={Constructive nonlinear control},
  author={Sepulchre, Rodolphe and Jankovic, Mrdjan and Kokotovic, Petar V},
  year={2012},
  publisher={Springer Science \& Business Media}
}

@book{arcak2016networks,
  title={Networks of dissipative systems: compositional certification of stability, performance, and safety},
  author={Arcak, Murat and Meissen, Chris and Packard, Andrew},
  year={2016},
  publisher={Springer}
}

@article{simpson2018equilibrium,
  title={Equilibrium-independent dissipativity with quadratic supply rates},
  author={Simpson-Porco, John W},
  journal={IEEE Transactions on Automatic Control},
  volume={64},
  number={4},
  pages={1440--1455},
  year={2018},
  publisher={IEEE}
}

@article{burger2014duality,
  title={Duality and network theory in passivity-based cooperative control},
  author={B{\"u}rger, Mathias and Zelazo, Daniel and Allg{\"o}wer, Frank},
  journal={Automatica},
  volume={50},
  number={8},
  pages={2051--2061},
  year={2014},
  publisher={Elsevier}
}

@article{cortes2008discontinuous,
  title={Discontinuous dynamical systems},
  author={Cortes, Jorge},
  journal={IEEE Control systems magazine},
  volume={28},
  number={3},
  pages={36--73},
  year={2008},
  publisher={IEEE}
}

@book{nagurney2012projected,
  title={Projected dynamical systems and variational inequalities with applications},
  author={Nagurney, Anna and Zhang, Ding},
  volume={2},
  year={2012},
  publisher={Springer Science \& Business Media}
}

@article{bacciotti2006nonpathological,
  title={Nonpathological Lyapunov functions and discontinuous Carath{\'e}odory systems},
  author={Bacciotti, Andrea and Ceragioli, Francesca},
  journal={Automatica},
  volume={42},
  number={3},
  pages={453--458},
  year={2006},
  publisher={Elsevier}
}

\appendix

\subsection{Properties of the Kronecker product} \label{sec:appendix_kronecker}

The following Kronecker product properties~\cite[p.~107]{eves1980elementary} are used in some proofs throughout the paper. Consider matrices $A, B, C$ and $D$ of appropriate dimension. 

\begin{description}
    \item[(P1)] The mixed-product property:  
    \begin{equation}
        (A \otimes B)(C \otimes D) = AC \otimes BD.
    \end{equation}
    \item[(P2)] Associativity:  
    \begin{equation}
        A \otimes (B + C) = A \otimes B + A \otimes C.
    \end{equation} 
\end{description}

\subsection{Properties of projected dynamical systems} \label{sec:appendix_projected}

Projected dynamical system theory use the concept of projecting a vector onto a closed convex set~\cite{nagurney2012projected}. They make use of the common (point) projection of point $z \in \R^n$ onto the closed convex set $\mathcal{K}$ defined as $\proj{z}{\mathcal{K}} = \arg \min_{x \in \mathcal{K}} \Vert x-z \Vert$. The vector projection is defined as~\cite{nagurney2012projected}
\begin{align}\label{eq:vector_projection}
    \vproj{v}{x}{\mathcal{K}} = \lim_{\delta \rightarrow 0^+}\frac{\proj{x+\delta v}{\mathcal{K}} - x}{\delta}.
\end{align}

The following proposition gives a geometrical interpretation of the vector projection, which is used later to establish that the system under study can be indeed casted a projected dynamical system.

The interior and boundary of a set $\mathcal{A}$ is denoted by $\inter{\mathcal{A}}$ and $\bound{\mathcal{A}}$, respectively. 
\begin{proposition}[{\cite[Lemma 2.1]{nagurney2012projected}}] \label{prop:geometrical_interpretation_vector_projection}
    Let $n(x)$ denote the set of inward normals to $\mathcal{K}$ at $x$. 
    \begin{itemize}
        \item[(i).] If $x \in \inter{\mathcal{K}}$, then $\vproj{v}{x}{\mathcal{K}} = v$
        \item[(ii).] If $x \in \bound{\mathcal{K}}$, then $\vproj{v}{x}{\mathcal{K}} = v + \beta(x,v) n^*(x)$ with $n^*(x) = \arg \max_{n \in n(x)} v^\trans (-n)$ and $\beta(x,v) = \max\{0, v^\trans (-n^*(x))\}$.
    \end{itemize}
\end{proposition}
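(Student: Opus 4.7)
The plan is to prove both cases directly from the limit definition of the vector projection given in \eqref{eq:vector_projection}, exploiting the first-order behaviour of the Euclidean metric projector onto a closed convex set.

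For Case (i), I would argue that $x \in \inter{\mathcal{K}}$ implies the existence of a radius $\epsilon > 0$ with $B_\epsilon(x) \subset \mathcal{K}$. Then for every $\delta \in (0, \epsilon/\Vert v \Vert)$ the point $x + \delta v$ already lies in $\mathcal{K}$, so $\proj{x + \delta v}{\mathcal{K}} = x + \delta v$. Substituting into \eqref{eq:vector_projection} and letting $\delta \downarrow 0$ immediately yields $\vproj{v}{x}{\mathcal{K}} = v$, which is the formula with the understanding that the correction $\beta(x,v)\, n^*(x)$ is absent in the interior.

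For Case (ii), the key ingredient is the classical B-differentiability of the Euclidean projector onto a closed convex set: for $x \in \bound{\mathcal{K}}$ one has the first-order expansion
\begin{equation*}
    \proj{x + \delta v}{\mathcal{K}} = x + \delta \, \proj{v}{T_{\mathcal{K}}(x)} + o(\delta),
\end{equation*}
where $T_{\mathcal{K}}(x)$ denotes the tangent cone to $\mathcal{K}$ at $x$. Inserting this into \eqref{eq:vector_projection} gives $\vproj{v}{x}{\mathcal{K}} = \proj{v}{T_{\mathcal{K}}(x)}$. I would then invoke Moreau's decomposition theorem for mutually polar convex cones, which yields $v = \proj{v}{T_{\mathcal{K}}(x)} + \proj{v}{N_{\mathcal{K}}(x)}$ with the two summands orthogonal, where $N_{\mathcal{K}}(x)$ is the outward normal cone (the polar of the tangent cone). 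Writing outward normals as $-n$ with $n$ a unit inward normal, the projection of $v$ onto $N_{\mathcal{K}}(x)$ is the nonnegative scaling of the outward direction best aligned with $v$, i.e.\ of $-n^*(x)$ with $n^*(x)=\arg\max_{n\in n(x)} v^\trans(-n)$, and the optimal scaling is precisely $\beta(x,v)=\max\{0, v^\trans(-n^*(x))\}$. The "$\max$ with zero" automatically handles the situation in which $v$ points tangentially or into $\mathcal{K}$, in which case $\proj{v}{N_{\mathcal{K}}(x)}=0$. Rearranging gives $\vproj{v}{x}{\mathcal{K}} = v - \proj{v}{N_{\mathcal{K}}(x)} = v + \beta(x,v)\, n^*(x)$, as claimed.

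The main obstacle is the B-differentiability step together with the explicit identification of $\proj{v}{N_{\mathcal{K}}(x)}$ when $n(x)$ contains multiple linearly independent inward normals, which happens at "corners" of $\mathcal{K}$; in full generality the projection onto the normal cone is not a single ray but a conic combination of several extreme normals, so the closed-form expression in the statement needs the convention that $n^*(x)$ is viewed as the effective aggregated direction along which the correction acts. For the specific polyhedral set $\mathcal{K} = \R^{\numOfAgents n} \times \R^{\numOfIneq{}}_{\geq 0} \times \R^{\numOfEq{}} \times \R^{\numOfControllers n}$ used in Proposition~\ref{prop:closed_loop_as_projected_sys}, the inward normals on any face are orthonormal standard basis vectors along the $\lambda$-coordinates, so the aggregation is trivial and $n^*(x)$ selects exactly the subset of coordinates with negative $v$-components. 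Because the statement is cited verbatim from \cite[Lemma~2.1]{nagurney2012projected}, I would simply refer to that reference for the full general argument rather than re-derive the B-differentiability and Moreau decomposition in the appendix.
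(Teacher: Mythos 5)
The paper itself gives no proof of this proposition: it is imported verbatim from \cite[Lemma 2.1]{nagurney2012projected} and used as a black box in Proposition~\ref{prop:closed_loop_as_projected_sys}, so there is no in-paper argument to compare yours against --- deferring to the reference, as you do in your final sentence, is exactly what the authors do. That said, your sketch is a correct and standard route to the result: the interior-ball argument settles (i), and for (ii) the combination of the directional differentiability of the metric projector, $\proj{x+\delta v}{\mathcal{K}} = x + \delta\,\proj{v}{T_{\mathcal{K}}(x)} + o(\delta)$, with Moreau's decomposition $v = \proj{v}{T_{\mathcal{K}}(x)} + \proj{v}{N_{\mathcal{K}}(x)}$ gives $\vproj{v}{x}{\mathcal{K}} = v - \proj{v}{N_{\mathcal{K}}(x)}$. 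The one point worth correcting is the ``obstacle'' you flag at corners: there is no aggregation issue. Writing $m = \proj{v}{N_{\mathcal{K}}(x)}$ and $t = \proj{v}{T_{\mathcal{K}}(x)}$ with $t \perp m$, one has for every unit inward normal $n$ (so that $-n \in N_{\mathcal{K}}(x)$) that $v^\trans(-n) = t^\trans(-n) + m^\trans(-n) \leq 0 + \Vert m \Vert$, with equality if and only if $-n = m/\Vert m\Vert$; hence the maximizer $n^*(x)$ is unique whenever $m \neq 0$, the optimal value is $\beta(x,v) = \Vert m \Vert$, and $v + \beta(x,v)\, n^*(x) = v - m$ exactly, even when $N_{\mathcal{K}}(x)$ is a full-dimensional cone. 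When $m = 0$ the same inequality gives $v^\trans(-n) \leq 0$ for all $n$, so the max with zero returns $\beta(x,v) = 0$ and the formula reduces to $v$. So the single-ray closed form in the statement is exact for a general closed convex $\mathcal{K}$, not only for the orthant-type set used in Proposition~\ref{prop:closed_loop_as_projected_sys}, and your proof needs no special convention there.
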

\begin{remark} \label{rem:geometrical_interpretation}
    It directly follows from Proposition~\ref{prop:geometrical_interpretation_vector_projection} case (ii) that if $x$ is at the boundary and $v$ points inwards into $\mathcal{K}$, $\vproj{v}{x}{\mathcal{K}} = v$ since $v^\trans (-n^*(x)) \leq 0$, which implies that $\beta(x,v) = 0$. Furthermore, if $x$ is at the boundary and $v$ points outwards and is perpendicular to $\mathcal{K}$, $\beta(x,v) n^*(x) = -v$ and $\vproj{v}{x}{\mathcal{K}} = 0$. 
\end{remark}

With the vector projection~\eqref{eq:vector_projection}, a special type of constrained, discontinuous system can be defined. Given a dynamical system $\dot{x} = \phi(x)$ and a closed and convex set $\mathcal{K}$, the projected dynamical system is defined as
\begin{align} \label{eq:projected_dynamical_sys}
    \dot{x} = \vproj{\phi(x)}{x}{\mathcal{K}}, \quad x(0) \in \mathcal{K}.
\end{align}

\begin{proposition}[{\cite[Th.~2.5]{nagurney2012projected}}] \label{prop:properties_projected_sys}
    Let $\phi: \R^n \rightarrow \R^n$ be Lipschitz on a closed and convex set $\mathcal{K}$. Then,
    \begin{itemize}
        \item[(i)] for any $x(0)\in\mathcal{K}$, there exists a unique solution $x(t)$ to the initial value problem~\eqref{eq:projected_dynamical_sys};
        \item[(ii)] if the sequence $x_m(0) \rightarrow x(0)$ as $m \rightarrow \infty$, then $x_m(t)$ converges to $x(t)$ uniformly on every compact set of $[0,\infty)$.  
    \end{itemize}
\end{proposition}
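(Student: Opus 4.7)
The plan is to prove (i) and (ii) simultaneously by recasting the projected ODE as the differential inclusion $\dot{x}\in\phi(x)-N_\mathcal{K}(x)$, where $N_\mathcal{K}(x)$ is the normal cone to $\mathcal{K}$ at $x$. This reformulation is justified by Proposition~\ref{prop:geometrical_interpretation_vector_projection}: in case (i) the projection is inactive so the correction $n^*$ is zero, and in case (ii) the term $\beta(x,v)n^*(x)$ is the (negated) projection of $\phi(x)$ onto the inward normal cone, which lies in $-N_\mathcal{K}(x)$. The virtue of this formulation is that the multivalued operator $N_\mathcal{K}$ is maximal monotone on the closed convex set $\mathcal{K}$, so even though $\vproj{\phi(x)}{x}{\mathcal{K}}$ is discontinuous on $\bound{\mathcal{K}}$, classical monotonicity machinery becomes available.

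For existence in part (i), I would use the Euler–projection (Skorokhod) scheme: for step size $\delta>0$, define $x^\delta(0)=x(0)\in\mathcal{K}$ and $x^\delta(t_{k+1})=\proj{x^\delta(t_k)+\delta\phi(x^\delta(t_k))}{\mathcal{K}}$, interpolated linearly. Because $\phi$ is Lipschitz (hence bounded on bounded subsets of $\mathcal{K}$) and the metric projection $\proj{\cdot}{\mathcal{K}}$ is $1$-Lipschitz, the family $\{x^\delta\}_{\delta>0}$ is uniformly bounded and equicontinuous on every compact interval $[0,T]$. Arzelà–Ascoli then yields a subsequence $x^{\delta_m}\to x$ uniformly on $[0,T]$, with $x(t)\in\mathcal{K}$ for all $t$. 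Passing to the limit in the discretized inclusion (writing $x^\delta(t_{k+1})-x^\delta(t_k)=\delta\phi(x^\delta(t_k))-(x^\delta(t_k)+\delta\phi(x^\delta(t_k))-\proj{\cdot}{\mathcal{K}})$ and recognizing the residual as an element of $-N_\mathcal{K}$) shows that $x$ is absolutely continuous and satisfies $\dot{x}(t)\in\phi(x(t))-N_\mathcal{K}(x(t))$ almost everywhere, which by Proposition~\ref{prop:geometrical_interpretation_vector_projection} is exactly~\eqref{eq:projected_dynamical_sys}.

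For uniqueness together with part (ii), I would exploit monotonicity of $N_\mathcal{K}$: for any two solutions $x,y$ with corresponding selections $n_x\in N_\mathcal{K}(x)$, $n_y\in N_\mathcal{K}(y)$, one has $\langle n_x-n_y,x-y\rangle\geq 0$, so
\begin{equation}
\tfrac{1}{2}\tfrac{d}{dt}\Vert x(t)-y(t)\Vert^2=\langle\phi(x)-\phi(y),x-y\rangle-\langle n_x-n_y,x-y\rangle\leq L\Vert x-y\Vert^2,
\end{equation}
where $L$ is the Lipschitz constant of $\phi$. Grönwall's inequality then gives
\begin{equation}
\Vert x(t)-y(t)\Vert\leq e^{Lt}\Vert x(0)-y(0)\Vert.
\end{equation}
Setting $x(0)=y(0)$ yields uniqueness in (i); applying the bound to a sequence $x_m$ with initial data $x_m(0)\to x(0)$ yields the uniform convergence on compact intervals claimed in (ii).

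The main obstacle is the first step: rigorously identifying the Euler–projection limit with a solution of the differential inclusion, because the residual terms $x^\delta(t_k)+\delta\phi(x^\delta(t_k))-\proj{x^\delta(t_k)+\delta\phi(x^\delta(t_k))}{\mathcal{K}}$ are only approximately normal to $\mathcal{K}$ at $x^\delta(t_k)$, and one must control this discrepancy as $\delta\to 0$ using the regularity of the projection and the fact that the boundary of $\mathcal{K}$ is convex. Once this inclusion characterization is secured, the monotonicity-based Grönwall estimate delivers both uniqueness and continuous dependence essentially for free.
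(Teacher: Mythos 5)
This proposition is quoted verbatim, with citation, from Theorem~2.5 of \cite{nagurney2012projected}; the paper itself supplies no proof, so there is no in-paper argument to compare yours against. Judged on its own terms, your sketch follows the standard route by which this result is established in the projected-dynamical-systems literature (Dupuis--Nagurney and the monograph cited here package the existence part as a Skorokhod problem, but the substance --- a discretize-and-pass-to-the-limit construction for existence, plus monotonicity of the normal cone and Gr\"onwall for uniqueness and continuous dependence --- is the same). The uniqueness/continuous-dependence half is complete as written: any two absolutely continuous solutions of the inclusion $\dot{x}\in\phi(x)-N_{\mathcal{K}}(x)$ satisfy $\tfrac{d}{dt}\Vert x-y\Vert^2\leq 2L\Vert x-y\Vert^2$ a.e.\ by monotonicity, and the integral form of Gr\"onwall gives $\Vert x(t)-y(t)\Vert\leq e^{Lt}\Vert x(0)-y(0)\Vert$, which yields both (i)-uniqueness and (ii) at once. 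That part is correct.

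Two remarks on the existence half. First, the obstacle you flag is actually smaller than you think: the Euler residual $x^\delta(t_k)+\delta\phi(x^\delta(t_k))-\proj{x^\delta(t_k)+\delta\phi(x^\delta(t_k))}{\mathcal{K}}$ is \emph{exactly} an element of $N_{\mathcal{K}}$ at the projected point $x^\delta(t_{k+1})$ (this is the variational characterization of the metric projection onto a convex set), not merely approximately normal at $x^\delta(t_k)$; the limit passage is then the standard demiclosedness of the graph of the maximal monotone operator $N_{\mathcal{K}}$ under strong convergence of the points and weak $L^2$ convergence of the normals. Second, there is a genuine step you gloss over in the opposite direction: having produced an absolutely continuous limit satisfying the \emph{inclusion}, you must still show it solves the projected system~\eqref{eq:projected_dynamical_sys}, i.e.\ that $\dot{x}(t)$ equals the specific element $\vproj{\phi(x(t))}{x(t)}{\mathcal{K}}$ rather than some other point of $\phi(x(t))-N_{\mathcal{K}}(x(t))$. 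This follows because at a.e.\ $t$ the derivative of a curve confined to $\mathcal{K}$ lies in the tangent cone at $x(t)$, and the Moreau decomposition of $\phi(x(t))$ into its tangent-cone and normal-cone components is unique, which forces $\dot{x}(t)$ to be the tangential part; that is precisely the quantity described by Proposition~\ref{prop:geometrical_interpretation_vector_projection}. With these two points made explicit, your argument is a sound proof of the cited result.
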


\subsection{Auxiliary proofs} \label{sec:appendix_proofs}

\begin{proposition} \label{prop:kernel_kronecker}
    Consider a matrix $A \in \R^{m \times n}$ with $m\geq n$, $\text{rank}(A) = n-1$ and a nullspace $\ker(A) = \{ \alpha 1_n \mid \alpha \in \R \}$. Then, the matrix 
    \begin{align} \label{eq:matrix}
        \left( A \otimes I_p \right) \in \R^{mp \times np} 
    \end{align}
    has a nullspace which is spanned by the vectors $1_n \otimes a_p$, where $a_p \in \R^p$ is an arbitrary constant vector. 
\end{proposition}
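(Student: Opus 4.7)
The plan is to prove the statement by a dimension-counting argument combined with the mixed-product property of the Kronecker product. The two ingredients are (a) exhibiting enough linearly independent vectors of the claimed form inside $\ker(A \otimes I_p)$, and (b) showing that the nullspace can be no larger.

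First I would verify that every vector of the form $1_n \otimes a_p$ lies in the kernel. Using property (P1) from Appendix~\ref{sec:appendix_kronecker}, we have
\begin{equation}
(A \otimes I_p)(1_n \otimes a_p) = (A \, 1_n) \otimes (I_p \, a_p) = 0 \otimes a_p = 0,
\end{equation}
since $1_n \in \ker(A)$ by hypothesis. Letting $e_1,\dots,e_p$ denote the canonical basis of $\R^p$, the vectors $1_n \otimes e_1,\dots,1_n \otimes e_p$ are linearly independent (their block structure makes this immediate), so the subspace $\{1_n \otimes a_p \mid a_p \in \R^p\}$ has dimension exactly $p$ and is contained in $\ker(A \otimes I_p)$.

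Next, I would bound the nullspace dimension from above via the rank-nullity theorem. A standard property of Kronecker products gives $\text{rank}(A \otimes I_p) = \text{rank}(A)\cdot \text{rank}(I_p) = (n-1)p$. Since $A \otimes I_p$ has $np$ columns, the rank-nullity theorem yields $\dim \ker(A \otimes I_p) = np - (n-1)p = p$. Combining this with the lower bound from the previous paragraph, the $p$-dimensional subspace $\{1_n \otimes a_p \mid a_p \in \R^p\}$ must equal the full nullspace, which is exactly the claim.

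The argument is essentially routine; the only subtlety worth flagging is invoking the rank identity $\text{rank}(A \otimes I_p) = p\cdot\text{rank}(A)$, which is a standard consequence of the Kronecker product and can be cited from~\cite{horn2012matrix} if desired. No other obstacle is expected.
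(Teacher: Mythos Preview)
Your proof is correct but takes a genuinely different route from the paper's. The paper argues directly: it writes an arbitrary vector $v\in\R^{np}$ in the tensor basis $e_i\otimes e_j$, expands $(A\otimes I_p)v$ using the mixed-product and associativity properties, and then uses linear independence of the $e_j$ to force each block $\sum_i v_{ij}e_i$ into $\ker(A)$, from which the claimed form follows. Your argument instead shows containment of the subspace $\{1_n\otimes a_p\}$ in the kernel and then matches dimensions via rank--nullity together with the identity $\text{rank}(A\otimes I_p)=\text{rank}(A)\cdot\text{rank}(I_p)$. Your approach is shorter and more conceptual, but it imports an external fact (rank multiplicativity of Kronecker products) that the paper avoids; the paper's computation is longer but entirely self-contained, relying only on the two Kronecker properties already stated in Appendix~\ref{sec:appendix_kronecker}.
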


\begin{proof}
    Let $e_i$, $i \in \{ 1, \dots, n \}$ and $e_j$, $j \in \{ 1, \dots, p \}$ be a basis of $\R^n$ and $\R^p$, respectively. The vectors $e_i \otimes e_j$ form then a basis for $\R^n \otimes \R^p$, which contains the kernel of matrix~\eqref{eq:matrix}. To determine the kernel of~\eqref{eq:matrix}, we have to find vectors $v \in \R^n \times \R^p$ such that
    \begin{align} \label{eq:kernel}
        \left( A \otimes I_p \right) v = 0.
    \end{align}
    For that, we parametrize the vector $v$ using the basis as $v = \sum_{i=1}^n \sum_{j=1}^p v_{ij}(e_i \otimes e_j)$, with $v_{ij} \in \R$. Then,~\eqref{eq:kernel} reads 
    \begin{align} \label{eq:kernel_parametrized}
        \left( A \otimes I_p \right)\sum_{i=1}^n \sum_{j=1}^p v_{ij}(e_i \otimes e_j) = 0.
    \end{align}
    Using the mixed-product and associative properties P1 and P2 of the Kronecker product as in Section~\ref{sec:appendix_kronecker},~\eqref{eq:kernel_parametrized} can be rearranged to
    \begin{align} \label{eq:kernel_parametrized2}
        \sum_{i=1}^n \sum_{j=1}^p v_{ij} (A e_i \otimes e_j) = \sum_{j=1}^p \left( \sum_{i=1}^n v_{ij} A e_i \right) \otimes e_j = 0.
    \end{align} 
    Since the vectors $e_j$ in~\eqref{eq:kernel_parametrized2} are linearly independent, all the individual addends over index $j$ have to be zero in order to fulfill the equality. The only way to achieve this is if the left-hand side of each of the Kronecker products is zero, i.e.,
    \begin{align}\label{eq:kernel_parametrized3}
        \sum_{j=1}^p v_{ij} A e_i = A \sum_{j=1}^p v_{ij} e_i = 0.
    \end{align} 
    Equation~\eqref{eq:kernel_parametrized3} is true if and only if $\sum_{i=1}^n v_{ij} e_i$ is a nullvector of $A$, i.e., $\alpha_j 1_n$ for some $\alpha_j \in \R$. Thus, the vectors
    \begin{align}
        v = \sum_{j=1}^p \alpha_j 1_n \otimes e_j = 1_n \otimes \left( \sum_{j=1}^p \alpha_j e_j \right) = 1_n \otimes a_p 
    \end{align}
    span the kernel of $\left( A \otimes I_p \right)$. 
\end{proof}

\end{document}